\theoremstyle{plain}
\numberwithin{equation}{section}
\newtheorem {lemma} {Lemma}[section]
\newtheorem {theorem} {Theorem}[section]
\newtheorem{proposition}{Proposition}[section]
\theoremstyle{remark}
\DeclareMathOperator{\Dim}{Dim}
\DeclareMathOperator{\tr}{tr}
\begin{document}

\title{Plancherel representations of $U(\infty)$ and correlated Gaussian Free Fields}

\author{Alexei Borodin\thanks{Massachusetts Institute of Technology, Cambridge, USA, and Institute for Information Transmission Problems, Moscow, Russia},
Alexey Bufetov\thanks{Higher School of Economics, Moscow, Russia, and Institute for Information Transmission Problems, Moscow, Russia}
}
\date{}

\maketitle

\begin{abstract}
We study asymptotics of traces of (noncommutative) monomials
formed by images of certain elements of the universal enveloping algebra of the infinite-dimensional
unitary group in its Plancherel representations. We prove that they converge to
(commutative) moments of a Gaussian process that can be viewed as a collection of
simply yet nontrivially correlated two-dimensional Gaussian Free Fields. The limiting
process has previously arisen via the global scaling limit of spectra for
submatrices of Wigner Hermitian random matrices.

The results of the present work were announced in \cite{BB-ann}.
\end{abstract}

\section{Introduction}

Asymptotic studies of measures on partitions of representation theoretic origin
is a well-known and popular subject. In addition to its intrinsic importance in
representation theory, see e.g. \cite{BO-European} and references therein,
it enjoys close connections to the theory of random
matrices, interacting particle systems, enumerative combinatorics, and other domains,
for which it often provides crucial technical tools, cf. e.g.
\cite{Ok}, \cite{Olsh-Oxford}, \cite{BG}.

A typical scenario of how such measures arise is as follows: One starts
with a group that has a well known list of irreducible representations, often parametrized
by partitions or related objects. Then a decomposition of a natural reducible representation
of this group on irreducibles provides a split of the total dimension of the
representation space into dimensions of the corresponding isotypical components;
their relative sizes are the weights of the measure. This procedure is well-defined
for finite-dimensional representations, but also for infinite-dimensional representations
with finite trace; the weight of (the label of) an isotypical component is then defined
as the trace of the projection operator onto it, provided that the trace is normalized
to be equal to 1 on the identity operator.

An alternative approach to measures of this sort consists in defining
averages with respect to such a measure for a suitable set of functions on labels
of the irreducible representations. These averages are obtained as traces of the
operators in the ambient representation space that are scalar in each of the
isotypical components. In their turn, the operators are images of central elements
in the group algebra of the group if the group is finite, or in the universal
enveloping algebra of the Lie algebra if one deals with a Lie group. The central
elements form a commutative algebra that is being mapped to the algebra of functions
on the labels, i.e., on partitions or their relatives. The value of the function
corresponding to a central element at a representation label is the (scalar) value
of this element in that representation.

While one may be perfectly satisfied with such an approach from the probabilistic point of
view, from the representation theoretic point of view it is somewhat unsettling that we
are able to only deal with commutative subalgebras this way, while the main interest
of representation theory is in noncommutative effects.

The goal of this work is go beyond this commutativity constraint.

More exactly, in a specific setting of the finite trace representations of the
infinite-dimensional unitary group $U(\infty)$ described below, we consider a family of commutative
subalgebras of the universal enveloping algebra such that elements from different
subalgebras generally speaking do not commute. We further consider the limit
regime in which the measures for each of the commutative subalgebras are known
to approximate the two-dimensional Gaussian Free Field (GFF), see \cite{BorFer}.
We want to study the ``joint distribution'' of these GFFs for different subalgebras,
whatever this might mean.

For any element of the universal enveloping algebra, one can define its ``average''
as the trace of its image in the representation. Thus, having a representation,
we can define ``averages'' for arbitrary products of elements from our subalgebras,
despite the fact that the elements do not commute.

Our main result (fully stated in Section \ref{sc:mainresult} below) is that for certain {\it Plancherel\/} representations, these ``averages''
converge to actual averages of suitable
observables on a Gaussian process that consists of a family of explicitly correlated
GFFs. Thus, the original absence of commutativity in this limit disappears, and yet
the limiting GFFs that arise from different commutative subalgebras do not become
independent.

The same limiting object (the collection of correlated GFFs) has been previously
shown to be the universal global scaling limit for eigenvalues of
various submatrices of Wigner Hermitian random matrices, cf. \cite{Bor, Bor2}.
We also expect it to arise from other, non-Plancherel factor representations of
the infinite-dimensional unitary group under appropriate limit transitions.

The paper is organized as follows.

In Section \ref{sc:preliminaries} we give necessary definitions and collect a few known facts on asymptotics of the Plancherel measures for $U(\infty)$.
Section \ref{sc:statement} contains the statement of our main result.
In Section \ref{sc:covariance} we show how one obtains the variance of the limiting Gaussian process.
The final Section \ref{sc:proof} contains a proof of the asymptotic normality of the chosen
``observables''.

\subsubsection*{Acknowledgements}
The authors are very grateful to Grigori Olshanski for numerous discussions that were
extremely helpful. A.~Borodin was partially supported by NSF grant DMS-1056390.
A.~Bufetov was partially supported by Simons Foundation-IUM scholarship, by Moebius
Foundation for Young Scientists, and by RFBR--CNRS grant 10-01-93114.

\section{Preliminaries}\label{sc:preliminaries}

\subsection{The infinite-dimensional unitary group and its characters}
\label{infinite group}

Let $U(N)=\left\{ [u_{ij}]_{i,j=1}^N \right\}$ be the group of $N \times N$ unitary matrices.
Consider the tower of embedded unitary groups
\begin{equation*}
U(1) \subset U(2) \subset \dots U(N) \subset U(N+1) \subset \dots,
\end{equation*}
where the embedding $U(k) \subset U(k+1)$ is defined by $u_{i,k+1}=u_{k+1,i}=0$, $1 \le i \le k$, $u_{k+1,k+1}=1$. \emph{The infinite--dimensional unitary group } is the union of these groups:

\begin{equation*}
U(\infty) = \bigcup_{N=1}^{\infty} U(N).
\end{equation*}

Define a \emph{character} of the group $U(\infty)$ as a function
$\chi : U(\infty) \to \mathbb C$ that satisfies

1) $\chi(e)=1$, where $e$ is the identity element of $U(\infty)$ (normalization);

2) $\chi(g h g^{-1}) = \chi (h)$, where $g,h$ are any elements of $U(\infty)$ (centrality);

3) $[\chi( g_i g_j^{-1})]_{i,j=1}^n$ is an Hermitian and positive-definite matrix for
any $n\ge 1$ and $g_1, \dots, g_n \in U(\infty)$ (positive-definiteness);

4) the restriction of $\chi$ to $U(N)$ is a continuous function for any $N\ge 1$
(continuity).

The space of characters of $U(\infty)$ is obviously convex. The extreme points of this space are called \emph{extreme} characters; they replace irreducible characters in this setting. The classification of the extreme characters is known as the Edrei--Voiculescu theorem (see \cite{Vo}, \cite{Edr}, \cite{VK}, \cite{OkoOls98}, \cite{BorOls}).
It turns out that the extreme characters can be parameterized by the set $\Omega=(\alpha^+, \alpha^-, \beta^+,\beta^-, \delta^+, \delta^-)$, where

\begin{gather*}
\alpha^{\pm} = \alpha_1^{\pm} \ge \alpha_2^{\pm} \ge \dots \ge 0, \\
\beta^{\pm} = \beta_1^{\pm} \ge \beta_2^{\pm} \ge \dots \ge 0, \\
\delta^{\pm} \ge 0, \ \ \ \sum_{i=1}^{\infty} (\alpha_i^{\pm}+\beta_i^{\pm}) \le \delta^{\pm}, \ \ \ \beta_1^+ + \beta_1^- \le 1.
\end{gather*}

Instead of $\delta^{\pm}$ we can use parameters $\gamma^{\pm}\ge 0$ defined by
\begin{equation*}
\gamma^{\pm} := \delta^{\pm} - \sum_{i=1}^{\infty} (\alpha_i^{\pm} + \beta_i^{\pm}).
\end{equation*}

Each $\omega \in \Omega$ defines a function $f_0^{\omega} : \{ u \in \mathbb C : |u|=1 \} \to \mathbb C$ by
\begin{equation*}
f_0^{\omega} (u) = \exp( \gamma^+ (u-1) + \gamma^{-} (u^{-1}-1) ) \prod_{i=1}^{\infty}
\frac{(1+\beta_i^+ (u-1))}{(1-\alpha_i^+ (u-1))} \frac{(1+\beta_i^- (u^{-1}-1))}{(1- \alpha_i^- (u^{-1}-1))}.
\end{equation*}
Let
\begin{equation*}
\chi^{\omega} (U) := \prod_{u \in Spectrum(U)} f_0^{\omega} (u), \qquad U \in U(\infty).
\end{equation*}
Then $\chi^{\omega}$ is the extreme character of $U(\infty)$ corresponding to $\omega \in \Omega$.

A \emph{signature} (also called \emph{highest weight}) of length $N$ is a sequence of $N$ weakly decreasing integers
\begin{equation*}
\lambda = \left( \lambda_1 \ge \lambda_2 \ge \dots \ge \lambda_N \right), \qquad \lambda_i \in \mathbb Z, \quad 1\le i\le N.
\end{equation*}

It is well known that the irreducible (complex) representations of $U(N)$ can be parametrized by signatures of length $N$ (see e.g. \cite{Wey39}, \cite{Zhe}).
Let $\Dim_N (\lambda)$ be the dimension of the representation corresponding to $\lambda$. By $\chi^{\lambda}$ we denote the conventional character
of this representation (i.e., the function on the group obtained by evaluating the trace of the
representation operators) divided by $\Dim_N (\lambda)$.

Represent a signature $\lambda$ as a pair of Young diagrams $(\lambda^+, \lambda^-)$, where $\lambda^+$ consists of non-negative $\lambda_i$'s and $\lambda^-$ consists of negative $\lambda_i$'s:
\begin{equation*}
\lambda = (\lambda_1^+, \lambda_2^+, \dots, -\lambda_2^-, -\lambda_1^-).
\end{equation*}

For Young diagram $\mu$ let $|\mu|$ be the number of boxes of $\mu$, and let $d(\mu)$ be the number of diagonal boxes of $\mu$. Let $d(\lambda^+)=d^+$ and $d(\lambda^-)=d^-$. Define the \emph{modified Frobenius coordinates} of $\mu = (\mu_1, \dots , \mu_k)$ by setting
\begin{equation}
\label{frob}
a_i = \mu_i - i +\frac12, \ \ \  b_i = \mu'_i -i + \frac12, \qquad 1 \le i \le d(\mu),
\end{equation}
where $\mu'$ is the transposed diagram.

Given a sequence $\{ f_N \}$ of functions $f_N: U(N) \to \mathbb C$ and a function $f: U(\infty) \to \mathbb C$, we say that $f_N$'s \emph{approximate} $f$ if for any fixed $N_0 \in \mathbb N$ the restrictions of the functions $f_N$ to $U(N_0)$ uniformly converge to the restriction of $f$ to $U(N_0)$ as $N\to\infty$.

It turns out that the extreme characters of $U(\infty)$ can be approximated by (normalized) irreducible characters of $U(N)$.

\begin{theorem}
Denote by $\chi^{\omega}$ the extreme character of $U(\infty)$ corresponding to $\omega = (\alpha^{\pm}, \beta^{\pm}, \delta^{\pm}) \in \Omega$. Let $\{ \lambda(N) \}$ be a sequence of signatures of length $N$ such that the (modified) Frobenius coordinates of $\lambda^{\pm}$ are equal to $a^{\pm}_i (N)$, $b^{\pm}_i (N)$. Then the characters $\chi^{\lambda(N)}$ approximate $\chi^{\omega}$ iff

\begin{equation*}
\lim_{N \to \infty} \frac{a_i^{\pm} (N)}{N} = \alpha_i^{\pm}, \ \ \
\lim_{N \to \infty} \frac{b_i^{\pm} (N)}{N} = \beta_i^{\pm}, \ \ \
\lim_{N \to \infty} \frac{|\lambda^{\pm} (N)|}{N} = \delta^{\pm}.
\end{equation*}

\end{theorem}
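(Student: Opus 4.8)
The statement is an ``if and only if,'' and in both directions I would reduce everything to the \emph{single-variable} normalized character. Because the extreme character $\chi^\omega$ is multiplicative over the spectrum, its restriction to $U(N_0)$ equals $\prod_{k=1}^{N_0} f_0^\omega(u_k)$ for an element with eigenvalues $u_1,\dots,u_{N_0}$. The characters $\chi^{\lambda(N)}$ are not multiplicative at finite $N$, but the Weyl character formula supplies a determinantal identity: with $\ell_j=\lambda_j(N)+N-j$, the ratio $s_{\lambda(N)}(u_1,\dots,u_{N_0},1^{N-N_0})/s_{\lambda(N)}(1^N)$ is a ratio of confluent Vandermonde determinants which, after dividing out $\prod_{i<j}(u_i-u_j)$, is expressed through the single-variable function $\Phi_N(u):=\chi^{\lambda(N)}(\mathrm{diag}(u,1^{N-1}))$ and whose deviation from $\prod_k\Phi_N(u_k)$ is asymptotically negligible. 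The key structural fact is that $\Phi_N$ is the characteristic function of a probability measure on $\mathbb Z$: its Laurent coefficients are the non-negative branching (cotransition) weights and sum to $1$. Hence pointwise convergence $\Phi_N\to f_0^\omega$ on the unit circle is equivalent to weak convergence of these spectral measures and is automatically uniform by L\'evy's continuity theorem. The theorem therefore reduces to the one-variable claim: $\Phi_N\to f_0^\omega$ uniformly if and only if the rescaled Frobenius coordinates converge as prescribed.

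For the one-variable analysis I would identify the limiting spectral measure. Inspecting $f_0^\omega$ on the circle shows it is itself a characteristic function: $\exp(\gamma^+(u-1))$ is the generating function of a Poisson$(\gamma^+)$ law, each factor $(1-\alpha_i^+(u-1))^{-1}$ that of a geometric law, and each $1+\beta_i^+(u-1)$ that of a Bernoulli$(\beta_i^+)$ law (here the constraints $\sum_i(\alpha_i^\pm+\beta_i^\pm)\le\delta^\pm$ and $\beta_1^++\beta_1^-\le1$ are exactly what make these factors, and their infinite product, legitimate), with the $u^{-1}$-factors giving the negatives of independent copies. So the target is that the spectral measure of $\Phi_N$ converges to the law of an independent sum of Poisson, geometric, and Bernoulli variables and their negatives. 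Using the Weyl formula to write $\Phi_N$ explicitly and examining the shifted coordinates $\{\ell_j\}$, these separate into finitely many macroscopic coordinates --- the top rows $a_i^\pm(N)$ and columns $b_i^\pm(N)$ of $\lambda^\pm$ --- and a microscopic bulk. A term-by-term analysis then matches a row with $a_i^+(N)/N\to\alpha_i^+$ to the geometric factor, a column with $b_i^+(N)/N\to\beta_i^+$ to the Bernoulli factor, and the bulk, whose normalized content tends to $\gamma^+=\delta^+-\sum_i(\alpha_i^++\beta_i^+)$, to the Poisson factor through a Poisson-type limit of many vanishing contributions; the $\lambda^-$ part gives the same with $u\mapsto u^{-1}$. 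Multiplying reproduces $f_0^\omega$, which proves the ``if'' direction. For the ``only if'' direction, uniform convergence $\Phi_N\to f_0^\omega$ forces the limiting law, and the poles, zeros, and exponential type of $f_0^\omega$ uniquely determine $\alpha_i^\pm,\beta_i^\pm$ and $\gamma^\pm$, hence $\delta^\pm$; since the macroscopic coordinates are read off from the poles and zeros, each rescaled coordinate must converge to the stated limit.

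I expect the main obstacle to be the control of the bulk in the one-variable asymptotics: one must show that the infinitely many microscopic Frobenius coordinates aggregate into a single Poisson factor with the correct constant $\gamma^\pm$, that the macroscopic and microscopic parts decouple so the limit is a genuine \emph{independent} sum, and that all tails are uniformly negligible so that the limit may be interchanged with the infinite product defining $f_0^\omega$. This calls for a careful separation of the finitely many large coordinates from the remainder together with a uniform dominated-convergence/tail estimate, and it is precisely here that the summability conditions on $(\alpha^\pm,\beta^\pm,\delta^\pm)$ are used. By comparison, the reduction from several variables to one is formal once the determinantal identity is in hand, and the converse direction is a uniqueness statement for the factorization of $f_0^\omega$.
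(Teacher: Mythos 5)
The paper does not actually prove this statement: it is the Edrei--Voiculescu/Vershik--Kerov approximation theorem, and the ``proof'' given is a pointer to \cite{VK}, with detailed proofs in \cite{OkoOls98} (via the binomial formula for shifted Schur/Jack polynomials) and \cite{BorOls} (via explicit contour-integral formulas for relative dimensions). Your sketch instead follows the original Vershik--Kerov strategy: reduce to the single-variable normalized character $\Phi_N(u)=s_{\lambda(N)}(u,1^{N-1})/s_{\lambda(N)}(1^N)$, observe that its Laurent coefficients are the cotransition probabilities of the Gelfand--Tsetlin branching (so $\Phi_N$ is the characteristic function of a $\mathbb Z$-valued law), and identify $f_0^{\omega}$ as the characteristic function of an independent sum of Poisson, geometric and Bernoulli variables and their negatives. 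These structural observations are all correct, and your matching of long rows to geometric factors and long columns to Bernoulli factors checks out on hook shapes (e.g.\ $e_k(u,1^{N-1})/e_k(1^N)=(1-k/N)+(k/N)u$).

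Two steps are underestimated. First, you call the reduction from $N_0$ variables to one ``formal once the determinantal identity is in hand''; it is not. The approximate multiplicativity $s_{\lambda}(u_1,\dots,u_k,1^{N-k})/s_{\lambda}(1^N)\approx\prod_i\Phi_N(u_i)$ is a genuine theorem requiring uniform asymptotic control of a confluent determinant divided by the Vandermonde, and avoiding (or establishing) precisely this estimate is why \cite{OkoOls98} and \cite{BorOls} take their respective technical detours; the same remark applies to your assertion that the macroscopic rows, columns, and the bulk decouple into an independent product, which you correctly flag but which is the heart of the ``if'' direction rather than an obstacle to be handled at the end. Second, in the ``only if'' direction, uniqueness of the parameters of $f_0^{\omega}$ does not by itself force convergence of the rescaled coordinates: you need a compactness argument --- from any subsequence extract a further subsequence along which $a_i^{\pm}(N)/N$, $b_i^{\pm}(N)/N$, $|\lambda^{\pm}(N)|/N$ converge in $[0,+\infty]$, rule out escape to infinity of $|\lambda^{\pm}(N)|/N$ (tightness of the spectral measures, which does not follow from pointwise convergence of $\Phi_N$ alone without an extra argument), apply the ``if'' direction along the subsequence, and only then invoke uniqueness. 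With these two points supplied, your outline is a viable proof along the lines of \cite{VK}, but as written it asserts rather than proves the two hardest lemmas.
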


\begin{proof}
This theorem is due to Vershik and Kerov, see \cite{VK}. See \cite{OkoOls98} and \cite{BorOls} for detailed proofs.
\end{proof}

\subsection{The Gelfand-Tsetlin graph and coherent systems of measures}
\label{gelfand-tsetlin}

Let $\mathbb {GT}_N$ denote the set of all signatures of length $N$. (Here the letters $\mathbb{GT}$ stand for
`Gelfand-Tsetlin'.)
We say that $\lambda \in \mathbb {GT}_N$ and $\mu \in \mathbb{GT}_{N-1}$ \emph{interlace},
notation $\mu \prec \lambda$, iff $\lambda_i \ge \mu_i \ge \lambda_{i+1}$ for any $1 \le i \le N-1$. We also define $\mathbb{GT}_0$ as a singleton consisting of an element that we denote as
$\varnothing$. We assume that $\varnothing \prec \lambda$ for any $\lambda\in \mathbb{GT}_1$.

The \emph{Gelfand-Tsetlin graph} $\mathbb {GT}$ is defined by specifying its set of vertices as
$\bigcup_{N=0}^{\infty} \mathbb{GT}_N $ and putting an edge between any two signatures $\lambda$ and $\mu$ such that
either $\lambda \prec \mu$ or $\mu \prec \lambda$.
A \emph{path} between signatures $\kappa \in \mathbb {GT}_K$ and $\nu \in \mathbb {GT}_N$, $K<N$, is a sequence
\begin{equation*}
\kappa = \lambda^{(K)} \prec \lambda^{(K+1)} \prec \dots \prec \lambda^{(N)} = \nu, \qquad  \lambda^{(i)} \in \mathbb{GT}_i,\quad K\le i\le N.
\end{equation*}

It is well known that $\Dim_N (\nu)$ is equal to the number of paths between $\varnothing$ and $\nu \in \mathbb {GT}_N$. An \emph{infinite path} is a sequence
\begin{equation*}
\varnothing \prec \lambda^{(1)} \prec \lambda^{(2)} \prec \dots \prec \lambda^{(k)} \prec \lambda^{(k+1)} \prec \dots.
\end{equation*}

We denote by $\mathcal P$ the set of all infinite paths. It is a topological space with
the topology induced from the product topology on the ambient product of discrete sets
$\prod_{N\ge 0}\mathbb{GT}_N$. Let us equip $\mathcal P$ with the Borel $\sigma$-algebra.

For $N=0,1,2,\dots$, let $ M_N$ be a probability measure on $\mathbb {GT}_N$. We say that
$\{ M_N \}_{N=0}^{\infty}$ \emph{is a coherent system of measures} if for any $N\ge 0$
and $\lambda\in\mathbb{GT}_N$,
\begin{equation*}
M_{N} (\lambda) = \sum_{\nu : \lambda \prec \nu} M_{N+1} (\nu) \frac{\Dim_{N} (\lambda)}{\Dim_{N+1} (\nu)}.
\end{equation*}
Given a coherent system of measures $\{ M_N \}_{N=1}^{\infty}$, define the weight of a
cylindric set of $\mathcal P$ consisting of all paths with prescribed members up to $\mathbb{GT}_N$ by
\begin{equation}
\label{mera-puti}
P ( \lambda^{(1)}, \lambda^{(2)}, \dots, \lambda^{(N)} ) = \frac{ M_N (\lambda^{(N)})}{\Dim_N (\lambda^{(N)} )}.
\end{equation}
Note that this weight depends on $\lambda^{(N)}$ only (and does not depend on $\lambda^{(1)}$, $\lambda^{(2)}$, $\dots$, $\lambda^{(N-1)}$). The coherency property implies that
these weights are consistent, and they correctly define a Borel probability measure on $\mathcal P$.

Let $\chi$ be a character of $U(\infty)$. It turns out that for any $N\ge 1$, its restriction to $U(N)$ can be decomposed into a series in $\chi^{\lambda}$,
\begin{equation}
\label{char}
{\chi |}_{U(N)} = \sum_{\lambda \in \mathbb{GT}_N} M_N (\lambda) \chi^{\lambda}.
\end{equation}
It is readily seen that the coefficients $M_N (\lambda)$ form a coherent system of measures on $\mathbb {GT}$.
Conversely, for any coherent system of measures on $\mathbb{GT}$ one can construct a character
of $U(\infty)$ using the above formula.

\subsection{The probability measure corresponding to the one-sided Plancherel character}
\label{probMeasure}

Let $\chi^{\gamma^+}$ be an extreme character of $U(\infty)$ corresponding to parameters $\alpha^{+,-}=0$, $\beta^{+,-}=0$, $\gamma^- =0 $, and nonzero $\gamma^+$. By analogy with the classification of extreme characters of the infinite symmetric group, this character is called the \emph{one-sided Plancherel} character. Denote by $\tilde P^{\gamma^+}_N $ the coherent system of measures on ${\mathbb {GT} }$ corresponding to $\chi^{\gamma^+}$.

Let
\begin{equation*}
P_L^{\gamma} (\lambda) := \tilde P^{\gamma L}_L (\lambda), \qquad L \in \mathbb N,
\end{equation*}
where $\gamma>0$ is a fixed constant.

Suppose $S(n)$ is the symmetric group of degree $n$, $\mathbb Y_n$ is the set of Young diagrams with $n$ boxes, and $\dim \mu$ is the dimension of the irreducible representation of $S(n)$ corresponding to $\mu \in \mathbb Y_n$.

Let $u_1, \dots, u_L$ be the eigenvalues of the matrix $U \in U(L)$.

In order to obtain an explicit formula for $P^{\gamma}_L (\lambda)$ we need (see \eqref{char}) to decompose the function
\begin{equation*}
f_0^{\gamma L} (u_1, \dots, u_L) = \exp \left( \gamma L \sum_{i=1}^L \left(u_i-1 \right) \right)
\end{equation*}
on normalized irreducible characters of $U(L)$. It is well known (see e.g. \cite{Wey39}, \cite{Zhe}, \cite{GooWal}) that they are defined by
\begin{equation*}
\chi^{\lambda} = \dfrac{s_{\lambda} (u_1, \dots, u_L)}{\Dim_L \lambda},
\end{equation*}
where $s_{\lambda}$ is the Schur function (see e.g. \cite{Mac} for a definition).

Let us write the function $f_0^{\gamma L} (u_1, \dots, u_L)$ in the form
\begin{equation*}
\exp \left(\gamma L \sum_{i=1}^L (u_i-1) \right) = \exp(-\gamma L^2) \sum_{n=1}^{\infty}
\frac{ (\gamma L)^n p_1^n (u_1, \dots, u_L)}{n!},
\end{equation*}
where $p_1 (u_1, \dots, u_L) := \sum_{i=1}^L u_i$.

Using the well-known formula (see \cite{Mac})
\begin{equation*}
p_1^n (u_1, \dots, u_L) = \sum_{ \lambda \in \mathbb Y_n(L) } \dim \lambda \cdot s_{\lambda},
\end{equation*}
where $\mathbb Y_n (L)$ is the set of Young diagrams with $n$ boxes and no more than $L$ rows, we obtain
\begin{equation}
\label{prec1}
P^{\gamma}_L (\lambda) = \begin{cases}
e^{- \gamma L^2} \dfrac{ (\gamma L)^{\lambda_1 + \dots + \lambda_L}}{(\lambda_1 + \dots + \lambda_L)!} \dim \lambda
\Dim_L \lambda, &\text{ if } \lambda_1 \ge \dots \ge \lambda_L \ge 0; \\
0, &\text{ otherwise.}
\end{cases}
\end{equation}
Therefore, $P_L^{\gamma}$ is supported by signatures with non-negative coordinates or, equivalently, by Young diagrams with no more than $L$ rows.

Let us introduce another family of measures on Young diagrams previously considered by Biane \cite{Biane}, which is closely related to $\{ P^{\gamma}_L \}$. Let $N$ and $n$ be two positive integers. Consider the tensor space $V=(\mathbb C^N)^{\otimes n}$ as a bimodule with respect to the natural commuting actions of the groups $S(n)$ and $U(N)$. By the Schur-Weyl duality, the representation of the group $S(n) \times U(N)$ in $V=(\mathbb C^N)^{\otimes n}$ has simple spectrum which is indexed by Young diagrams $\lambda \in \mathbb Y_n (N)$. The dimension of the irreducible representation corresponding to $\lambda$ equals $\dim \lambda \cdot \Dim_N \lambda$. This serves as a prompt for introducing a probability measure $M_{n,N}^{SW} (\lambda)$ on $\mathbb Y_n (N)$:
\begin{equation*}
M_{n,N}^{SW} (\lambda) = \frac{\dim \lambda \Dim_N \lambda}{N^n}, \qquad \lambda \in \mathbb Y_n (N).
\end{equation*}

Let us substitute for $n$ a Poisson random variable with parameter $\nu$; then we obtain the
\emph{Poissonization} of the measures $M_{n,N}^{SW}$:
\begin{equation}
\label{SWP}
M_{\nu, N}^{SWP} (\lambda) = e^{-\nu} \frac{\nu^{|\lambda|}}{|\lambda|!} M_{|\lambda|,N}^{SW} (\lambda),
\qquad \lambda \in \mathbb Y.
\end{equation}
From \eqref{prec1} and \eqref{SWP} we have
\begin{equation*}
M_{\gamma L^2, L}^{SWP} (\lambda) = P_L^{\gamma} (\lambda).
\end{equation*}

The Poisson random variable with large parameter $\nu$ is concentrated around $\nu$. Therefore, asymptotic properties of $M^{SW}_{n,N}$ are close to asymptotic properties of $P_L^{\gamma}$ for $L=N$ and $n=[\gamma L^2]$.

As was shown in \cite{BorOls99}, \cite{Joh}, \cite{BorOlsh}, the random Young diagram distributed according to $P_L^{\gamma}$ gives rise to a determinantal random point process; this process is the Charlier orthogonal polynomial ensemble.

\subsection{Known results about Plancherel measures}
In this section we review some known results about $P^{\gamma}_L (\lambda)$ and $M_{[\gamma L^2], L}^{SW} (\lambda)$.

Take a Young diagram $\lambda$, flip and rotate it 135 degrees and denote by $\lambda(x) : \mathbb R \to \mathbb R$ the continuous piecewise linear function corresponding to the upper boundary of $\lambda$ with the condition $\lambda(x)=|x|$ if $|x|$ is large enough (see Figure \ref{functionYoung}).

\begin{figure}
\begin{center}
\includegraphics[height=6.5cm]{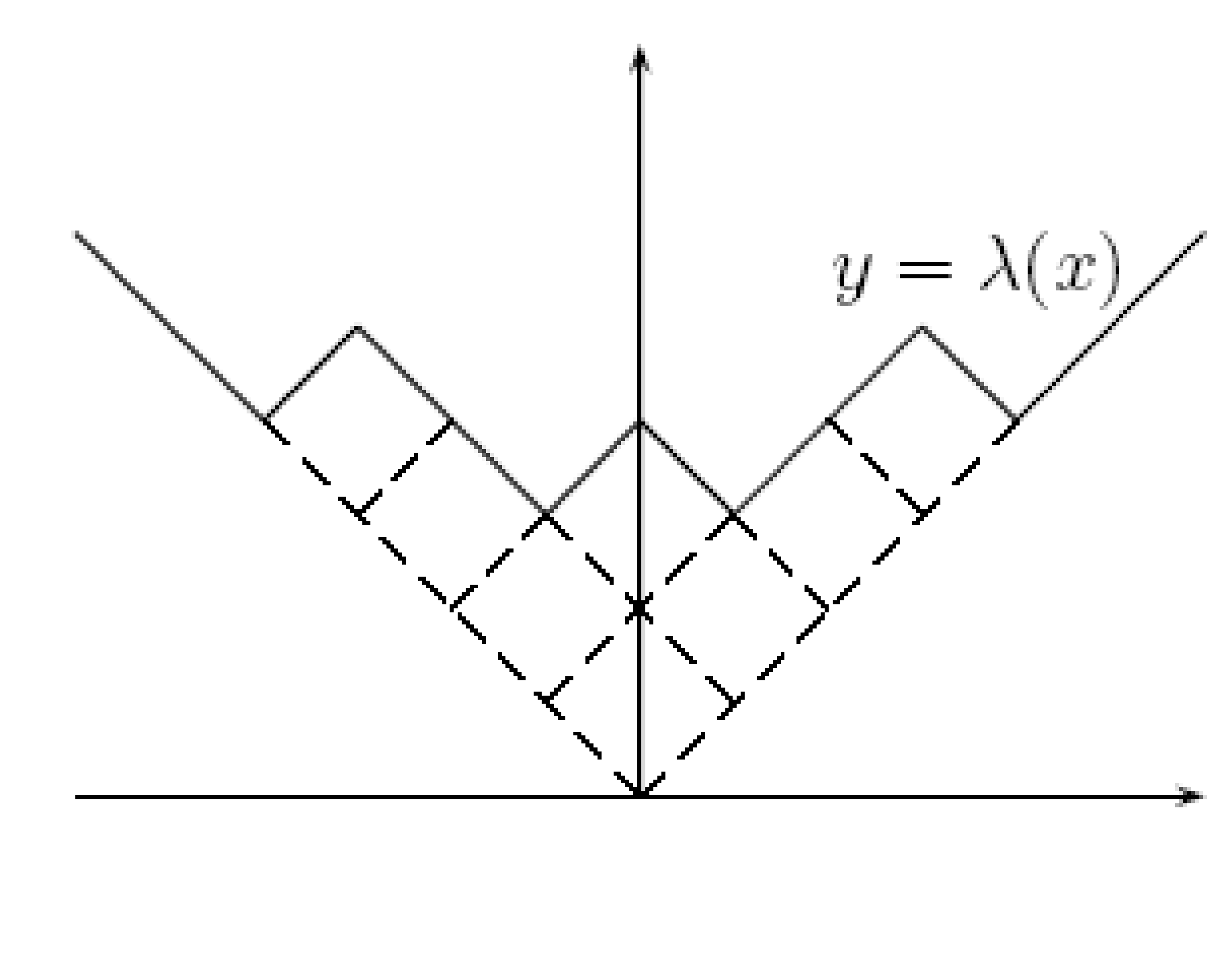}
\caption{The function $\lambda(x)$ corresponding to the Young diagram $\lambda=(4,2,1,1)$.}
\label{functionYoung}
\end{center}
\end{figure}

Suppose
\begin{equation*}
\bar \lambda (x) := \frac{\lambda(x L)}{L}
\end{equation*}
is a normalized boundary of a Young diagram. Let the diagram $\lambda \in \mathbb Y_{[\gamma L^2]}$ be distributed according to the measure $M^{SW}_{[\gamma L^2], L}$. Biane showed that the random function $\bar \lambda (x)$ converges to a deterministic limit function $\lambda_{\gamma} (x)$ (see the exact statement and the formulas for $\lambda_{\gamma} (x)$ in \cite{Biane}). In other words, the random diagram has a limit shape (see Figure \ref{limitshapes}).

\begin{figure}
\center{\includegraphics[height=4cm]{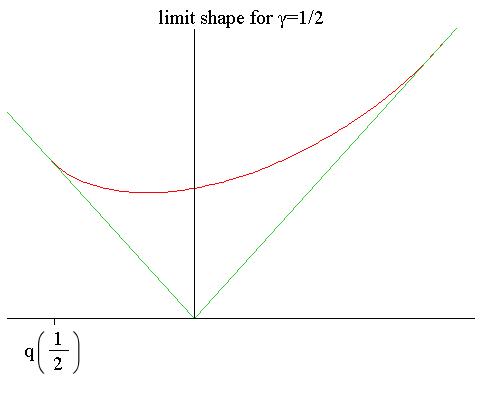}\
\includegraphics[height=4cm]{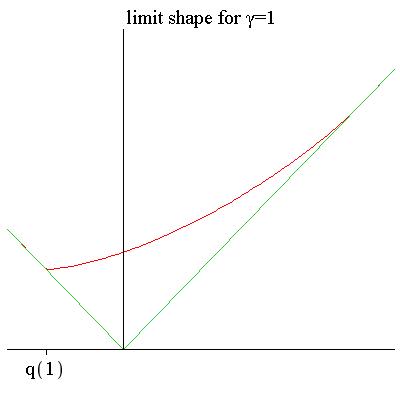}\
\includegraphics[height=4cm]{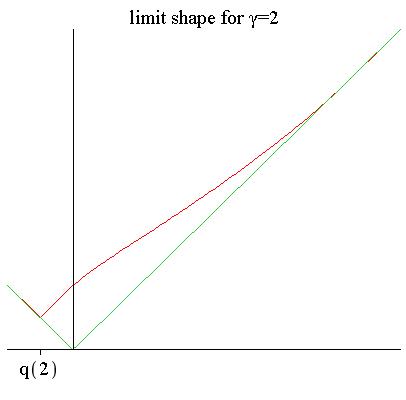}}
\caption{Limit shapes for the measures $P^{\gamma}_L (\lambda)$ and $M_{[\gamma L^2], L}^{SW} (\lambda)$}
\label{limitshapes}
\end{figure}

Let $q(\gamma)$ be the leftmost non-degenerate point of the limit shape $\lambda_{\gamma} (x)$ (see Figure \ref{limitshapes}). It turns out that
\begin{equation*}
\left. \frac{d \lambda'_{\gamma}(x)}{dx} \right|_{x=q(\gamma)+0}= \begin{cases}
+1, &\text{ if $\gamma>1$;}\\
0, &\text{ if $\gamma=1$;}\\
-1, &\text{ if $\gamma<1$.}
\end{cases}
\end{equation*}
The local behaviour of $\bar \lambda (x)$ near the point $q(1)$ in the critical case $\gamma=1$ for the poissonized measure $P_L^1$ was found in \cite{BorOlsh}.

The next possible problem is to find the limit behavior of fluctuations $\bar \lambda(x) - \lambda_{\gamma} (x)$. The first result of this type was obtained by Kerov in \cite{Ker} (see \cite{IvaOls} for a detailed proof) for the Plancherel measures of the symmetric groups.
For $M_{[\gamma L^2], L}^{SW}$, a Kerov type limit theorem was obtained by M\'eliot in \cite{Meliot}. Informally speaking, this result can be stated as follows:
\begin{equation*}
\bar \lambda(x) = \lambda_{\gamma} (x) + \frac{2}{L} \, \Delta_{\gamma} (x), \qquad L \to \infty,
\end{equation*}
where $\Delta_{\gamma} (x)$ is a generalized Gaussian process.

In \cite{BorKuan} local correlations were found in three limit regimes (sine, Airy, Pearcey) for a more general family of measures; these measures correspond to the \emph{two-sided Plancherel} character which arises in the case of nonzero $\gamma^+$ and $\gamma^-$. Moreover, the formulas from \cite{BorKuan} allow one to predict the limit shape of Young diagrams $\lambda^+$ and $\lambda^-$ in the case of two-sided Plancherel characters.

In a different direction, it was shown in \cite{Mkr} that there exists a limit
\begin{equation*}
\lim_{L \to \infty} \frac{- \ln \left( M_{[\gamma L^2,L]}^{SW} (\lambda) \right) }{L}, \qquad \mbox{in probability}.
\end{equation*}
This statement can be viewed as an analog of the Shennon-Macmillan-Breiman theorem.

\subsection{Random height function and GFF}
\label{resultsBorFer}
In this section we give necessary definitions and review results of \cite{BorFer}.

A {\it Gaussian family} is a collection of Gaussian random variables $\{ \xi_a \}_{a \in \Upsilon}$
indexed by an arbitrary set $\Upsilon$. We assume that all the random variables are centered, i.e.
\begin{equation*}
\mathbf E \xi_a = 0, \ \ \ \mbox{ for all } a \in \Upsilon.
\end{equation*}
Any Gaussian family gives rise to a \emph{covariance kernel}
$Cov : \Upsilon \times \Upsilon \to \mathbb R$ defined (in the centered case) by
\begin{equation*}
Cov (a_1, a_2) = \mathbf E ( \xi_{a_1} \xi_{a_2} ).
\end{equation*}

Assume that a function $\tilde C : \Upsilon \times \Upsilon \to \mathbb R$
is such that for any $n\ge 1$ and $a_1, \dots, a_n \in \Upsilon$,
$[\tilde C (a_i,a_j)]_{i,j=1}^{n}$ is a symmetric and positive-definite matrix.
Then (see e.g. \cite{Car}) there exists a centered Gaussian family with the covariance
kernel $\tilde C$.

Let $\mathbb H := \{ z \in \mathbb C : \mathfrak I (z) >0 \}$ be the upper half-plane, and
let $C_0^\infty$ be the space of smooth real--valued compactly supported test functions on
$\mathbb H$.
Let
\begin{equation*}
G(z,w):= -\frac{1}{2 \pi} \ln \left| \frac{z-w}{z - \bar w} \right|, \qquad z,w \in \mathbb H,
\end{equation*}
and define a function $C : C_0^\infty \times C_0^\infty \to \mathbb R$ via
\begin{equation*}
C (f_1, f_2) := \int_{\mathbb H} \int_{\mathbb H} f_1 (z) f_2 (w) G(z,w)
dz d \bar z dw d \bar w.
\end{equation*}

The \emph{Gaussian Free Field} (GFF) $\mathfrak G$ on $\mathbb{H}$ with zero boundary conditions can be
defined as a Gaussian family $\{ \xi_f \}_{ f \in C_0^\infty}$ with covariance kernel $C$.
The field $\mathfrak G$ cannot be defined as a random function on $\mathbb H$,
but one can make sense of the integrals $\int f(z) \mathfrak G(z) dz$ over smooth finite contours
in $\mathbb{H}$ with continuous functions $f(z)$, cf. \cite{She}.

Define the \emph{height function}
\begin{equation*}
H : \mathbb R_{\ge 0} \times \mathbb R_{\ge 1} \times \mathcal P \to \mathbb N
\end{equation*}
as
\begin{equation*}
H (x,y, \{ \lambda^{(n)} \} ) = \sqrt{\pi}\,  \left|\left\{ i\in \{1,2, \dots, [y] \} : \lambda_i^{(y)}  - i + \tfrac 12 \ge x \right\}\right|,
\end{equation*}
where $\lambda_i^{(y)}$ are the coordinates of the signature of length $[y]$ from the
infinite path.
If we equip $\mathcal P$ with a probability measure $\mu_{\gamma}$ then
$H(x,y)$ becomes a random function describing a certain random
stepped surface, or a random lozenge tiling of the half-plane, see \cite{BorFer}.

Define $x(z), y(z) : \mathbb H \to \mathbb R$ via
\begin{equation*}
x(z) = \gamma (1 - 2 \mathfrak R (z)), \ \ y(z) = \gamma |z|^2.
\end{equation*}
Let us carry $H(x,y)$ over to $\mathbb H$ --- define
\begin{equation*}
H^{\Omega} (z) = H ( L x(z), L y(z) ) , \qquad z \in \mathbb H.
\end{equation*}
It is known, cf. \cite{Biane, BorFer}, that there exists a limiting (nonrandom) height function
\begin{equation*}
\tilde h (z) := \lim_{L \to \infty} \frac {\mathbf E H^{\Omega} (z) } {L}, \ \ \ z \in \mathbb H,
\end{equation*}
that describes the limit shape. The fluctuations around the limit shape were studied in
\cite{BorFer}, where it was shown that the fluctuation field
\begin{equation}
\mathcal H (z):= H^{\Omega} (z) - \mathbf E H^{\Omega} (z) , \ \ z \in \mathbb H,
\end{equation}
converges to the GFF introduced above.

In \cite[Theorem 1.3]{BorFer} the following theorem was proved.
\begin{theorem}
\label{thBorFer}

Let $z_1, \dots, z_N \in \mathbb H$ be pairwise distinct complex numbers. Then
\begin{equation*}
\mathbf E ( \mathcal H (z_1) \dots  \mathcal H (z_N) )
\xrightarrow[L \to \infty]{}
\sum_{\sigma \in \mathcal {PM}(N)} \prod_{j=1}^{N/2} G(z_{\sigma(2j-1)}, z_{\sigma(2j)}),
\end{equation*}
where $\mathcal {PM}(N)$ is the set of involutions on $\{ 1,2, \dots, N \}$ with no fixed points also known as perfect matchings (in particular, $\mathcal {PM}(N)$ is empty if $N$ is odd), i.e. all possible disjoint set partitions $\{1,2,\dots,N\}=\{\sigma(1),\sigma(2)\}\sqcup\dots\sqcup\{\sigma(N-1),\sigma(N)\}$.
\end{theorem}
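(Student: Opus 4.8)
The plan is to exploit the determinantal structure of the underlying point process and reduce the statement to a computation of joint cumulants of linear statistics. Recall from Section~\ref{probMeasure} that the random Young diagram distributed according to $P_L^\gamma$ gives rise to the Charlier orthogonal polynomial ensemble, which is determinantal. Moreover, the signatures $\lambda^{(y)}$ for varying $y$ assemble into a single \emph{extended} (space-time) determinantal process on the Gelfand--Tsetlin pattern, with an explicit correlation kernel $K_L\big((x_1,y_1),(x_2,y_2)\big)$ admitting a double contour integral representation. The height function $H(x,y)$ is, up to the factor $\sqrt{\pi}$, a counting statistic of this process: it records the number of particles of the level-$[y]$ slice lying to the right of $x$, i.e.\ the linear statistic with test function $\mathbbm 1_{[x,\infty)}$. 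Consequently each $\mathcal H(z_i)$ is a centered linear statistic of the extended process evaluated at the point $(Lx(z_i),Ly(z_i))$.

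First I would express the moment $\mathbf E\big(\mathcal H(z_1)\cdots\mathcal H(z_N)\big)$ through joint cumulants of these counting statistics. Since each factor is centered, the moment equals the sum over set partitions of $\{1,\dots,N\}$ of products of joint cumulants over the blocks; singleton blocks vanish by centering, so only partitions into blocks of size $\ge 2$ contribute. The Wick (perfect-matching) formula in the statement is then precisely what survives if every joint cumulant of order $\ge 3$ tends to $0$ while the second-order (covariance) cumulants converge. For a determinantal process the $m$-th joint cumulant of counting statistics is a finite alternating sum of traces of products of the kernel $K_L$ against the indicators of the half-lines $\{\,\cdot\ge Lx(z_i)\,\}$; this is the standard cumulant-to-trace reduction for determinantal ensembles.

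The core analytic step is the asymptotic evaluation of these traces as $L\to\infty$ by steepest descent applied to the double contour integral for $K_L$. The saddle points of the exponent form a complex-conjugate pair; under the prescribed change of variables $x(z)=\gamma(1-2\mathfrak R(z))$, $y(z)=\gamma|z|^2$ one saddle is carried to $z\in\mathbb H$ and its conjugate to $\bar z$. Collecting the leading contributions, I would show that the second joint cumulant of $\mathcal H(z_i)$ and $\mathcal H(z_j)$ converges to $G(z_i,z_j)=-\tfrac{1}{2\pi}\ln\big|\tfrac{z_i-z_j}{z_i-\bar z_j}\big|$, the logarithmic structure arising exactly because the two relevant saddles are $z_j$ and its mirror image $\bar z_j$. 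Simultaneously the $m$-th cumulant for $m\ge 3$ acquires additional negative powers of $L$ and hence vanishes, which both establishes asymptotic Gaussianity and selects the pairings in Wick's theorem.

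The main obstacle is the steepest-descent analysis itself: one must justify deforming the contours so that they pass through the conjugate saddles uniformly in the fixed, pairwise distinct points $z_1,\dots,z_N$, control the subleading terms so that the off-diagonal kernel entries produce precisely the Green's function $G$ rather than some other symmetric kernel, and verify that the local/diagonal contributions cancel against the centering $\mathbf E H^{\Omega}$. Matching the answer to $G$, including the image-charge term $\ln|z-\bar w|$, is the delicate point, and it is here that the identification with the Gaussian Free Field with zero boundary conditions on $\mathbb H$ is genuinely used.
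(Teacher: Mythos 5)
This theorem is not proved in the paper at all: it is quoted verbatim from \cite[Theorem 1.3]{BorFer}, and the paper's ``proof'' is the citation, so there is no internal argument to compare yours against. Your outline is a reasonable reconstruction of the strategy actually used in \cite{BorFer}: the height function is a counting statistic of the extended determinantal (Charlier-type) process, its joint moments reduce to traces of products of the correlation kernel, and the asymptotics are extracted by steepest descent on the double contour integral representation of the kernel, with the complex-conjugate pair of critical points mapped to $z$ and $\bar z$ producing exactly the two logarithms in $G(z,w)=-\tfrac{1}{2\pi}\ln\bigl|\tfrac{z-w}{z-\bar w}\bigr|$. One organizational difference: \cite{BorFer} works directly with the moments $\mathbf E(\mathcal H(z_1)\cdots\mathcal H(z_N))$, letting the perfect matchings emerge from the combinatorics of the determinantal correlation functions, rather than passing through the cumulant--trace formalism; the two routes are equivalent, and yours has the advantage of isolating ``Gaussianity'' (vanishing of cumulants of order $\ge 3$) as a separate, cleanly stated claim.

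Two caveats. First, as written your text is a plan, not a proof: the entire analytic content --- justifying the contour deformations through the saddles uniformly in the fixed points $z_i$, controlling subleading terms, and verifying that the $m$-th cumulant for $m\ge 3$ gains negative powers of $L$ --- is named as ``the main obstacle'' but not carried out, and that is where all the work lies. Second, a subtlety you should make explicit: the variance of $\mathcal H(z)$ at a single point diverges (logarithmically in $L$), so the random variables $\mathcal H(z_i)$ do not converge individually; the statement is only about joint moments at \emph{pairwise distinct} points. Your cumulant reduction is consistent with this because every surviving block of size $2$ pairs two distinct indices and hence two distinct points, but the argument should say so, since a naive reading of ``the vector converges to a Gaussian vector'' would be false.
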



An extension of this theorem was also proved in \cite[Theorems 5.6, 5.8]{BorFer}; that result asserts the convergence for a certain space of test functions.

Let us formulate a similar statement that we prove in this work, and that utilizes
a different space of test functions.

Define a moment of the random height function via
\begin{equation*}
M_{y,k} := \int_{-\infty}^{\infty} x^k (H ( Lx, Ly) - \mathbf E H (Lx, Ly) ) dx.
\end{equation*}
Also define the corresponding moment of the GFF as
\begin{equation*}
\mathcal M_{y,k} = \int_{z \in \mathbb H; y = \gamma |z|^2} x(z)^k \mathfrak G(z) \frac{d x(z)}{dz } dz.
\end{equation*}

\begin{proposition}
As $L \to \infty$, the collection of random variables $ \{ M_{y,k} \}_{y >0, k \in \mathbb Z_{\ge 0}}$ converges, in the sense of finite-dimensional distributions, to  $\{ \mathcal M_{y,k} \}_{y >0, k \in \mathbb Z_{\ge 0}}$.
\end{proposition}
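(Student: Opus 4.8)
The plan is to recognize that $M_{y,k}$ and $\mathcal M_{y,k}$ are one and the same linear functional, applied respectively to the prelimit fluctuation field $\mathcal H$ and to the limiting field $\mathfrak G$, and then to deduce the convergence of the former to the latter from Theorem \ref{thBorFer} via the method of moments. First I would rewrite $M_{y,k}$ as a contour integral of $\mathcal H$. With $y$ fixed, the level-$[Ly]$ height fluctuation $H(Lx,Ly)-\mathbf E H(Lx,Ly)$, viewed as a function of $x$, equals $\mathcal H(z)$ where $z$ is the unique point of the semicircle $\{z\in\mathbb H:\ y=\gamma|z|^2\}$ with $x(z)=x$. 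The change of variables $x=x(z)$ then turns $M_{y,k}=\int_{-\infty}^{\infty} x^k\big(H(Lx,Ly)-\mathbf E H(Lx,Ly)\big)\,dx$ into $\int_{y=\gamma|z|^2} x(z)^k\,\mathcal H(z)\,\tfrac{dx(z)}{dz}\,dz$, which is precisely $\mathcal M_{y,k}$ with $\mathfrak G$ replaced by $\mathcal H$. Note that the $x$-integration is effectively over a compact interval, since for finite $L$ the fluctuation is supported on a bounded range of $x$ dictated by the limit shape; hence the polynomial weight $x^k$ creates no growth problem.

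Second, since the target family $\{\mathcal M_{y,k}\}$ is jointly Gaussian, being a collection of linear functionals of the GFF $\mathfrak G$, it is determined by its moments, and it suffices to prove that for any finite collection $(y_1,k_1),\dots,(y_m,k_m)$ all joint moments of the $M_{y_j,k_j}$ converge to the corresponding joint moments of the $\mathcal M_{y_j,k_j}$. Relabeling the factors of a joint moment of total degree $N$ (with multiplicities) and using Fubini, I would write
\[
\mathbf E\Big[\prod_{i=1}^N M_{y_i,k_i}\Big]=\int\cdots\int \Big(\prod_{i=1}^N x(z_i)^{k_i}\Big)\,\mathbf E\big[\mathcal H(z_1)\cdots\mathcal H(z_N)\big]\,\prod_{i=1}^N \frac{dx(z_i)}{dz_i}\,dz_i,
\]
where the $i$-th variable ranges over the semicircle $y_i=\gamma|z_i|^2$. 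By Theorem \ref{thBorFer} the factor $\mathbf E[\mathcal H(z_1)\cdots\mathcal H(z_N)]$ converges pointwise, for distinct $z_i$, to $\sum_{\sigma\in\mathcal{PM}(N)}\prod_{j} G(z_{\sigma(2j-1)},z_{\sigma(2j)})$. Passing the limit inside the integral and integrating the Wick sum term by term reproduces $\sum_{\sigma\in\mathcal{PM}(N)}\prod_{j}\mathbf E\big[\mathcal M_{y_{\sigma(2j-1)},k_{\sigma(2j-1)}}\,\mathcal M_{y_{\sigma(2j)},k_{\sigma(2j)}}\big]$, which by Wick's theorem is exactly the desired Gaussian joint moment. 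The method of moments then yields convergence in the sense of finite-dimensional distributions.

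The hard part will be justifying the interchange of the limit $L\to\infty$ with the $N$-fold integration, i.e. supplying a dominated-convergence argument. Two issues must be controlled. The concentric semicircles attached to distinct values of $y$ never meet, but within a single circle the variables $z_i$ may collide, and there the limiting kernel $G(z,w)$ has a logarithmic singularity while the pointwise convergence of Theorem \ref{thBorFer}, valid only for distinct points, degenerates. One must therefore check that this singularity is integrable along the curve, which it is, since $G(z,w)\sim-\tfrac1{2\pi}\ln|z-w|$ is locally integrable with respect to arclength on the product of two curves, and, crucially, that the prelimit mixed moments $\mathbf E[\mathcal H(z_1)\cdots\mathcal H(z_N)]$ obey a uniform-in-$L$ bound of comparable, locally integrable size near the diagonal. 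Establishing such a uniform bound, presumably by revisiting the correlation-kernel estimates underlying Theorem \ref{thBorFer}, is the principal technical obstacle; once it is in hand, dominated convergence applies and the argument closes.
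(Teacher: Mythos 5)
Your reduction of the proposition to Theorem \ref{thBorFer} has a genuine gap, and it sits exactly where you place it: the uniform-in-$L$ control of $\mathbf E[\mathcal H(z_1)\cdots\mathcal H(z_N)]$ near the diagonal that would license dominated convergence is never established, and it is not available off the shelf. Theorem \ref{thBorFer} is a pointwise statement at fixed, distinct $z_i$, with no rate and no uniformity; the observables $M_{y,k}$ pair the fluctuation field against measures supported on the curves $y=\gamma|z|^2$, which do not belong to the space of test functions for which an integrated version of that theorem is known --- the paper says this explicitly when it notes that the extension in \cite{BorFer} ``asserts the convergence for a certain space of test functions'' while the present proposition ``utilizes a different space of test functions.'' So the ``principal technical obstacle'' you defer is essentially the entire content of the statement. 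The paper does not close this gap by sharpening kernel estimates; it takes a completely different route: integrating \eqref{moment} by parts identifies $M_{y,k}$, up to an explicit constant, with the centered shifted power sum $p_{k+1,I}-\mathbf E\,p_{k+1,I}$, viewed as an element of $Z(\mathfrak{gl}(I))\subset\mathcal U(\mathfrak{gl}(\infty))$ (see \eqref{moment-formula}), whose joint moments under the Plancherel state are computed exactly as derivatives of $\exp\bigl(\gamma L\sum_i(x_{ii}-1)\bigr)$ via \eqref{dif-vid}; the Wick structure and the covariance then come out of the graph-counting arguments of Sections \ref{sc:covariance} and \ref{sc:proof} (Proposition \ref{51main} and Theorem \ref{th2}), with no appeal to Theorem \ref{thBorFer} at all. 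The proposition is then recorded as the special case of Theorem \ref{mainresult} with a single sequence $A=\mathbb N$.

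A secondary inaccuracy: identifying $\int_{-\infty}^{\infty}x^k\bigl(H(Lx,Ly)-\mathbf E H(Lx,Ly)\bigr)\,dx$ with an integral of $\mathcal H$ over the semicircle only accounts for $x$ in the bulk interval $(\gamma-2\sqrt{\gamma y},\,\gamma+2\sqrt{\gamma y})$, the image of the semicircle under $x(\cdot)$. For finite $L$ the fluctuation is not compactly supported there --- $\lambda_1$ is unbounded under $P_L^{\gamma}$, so $H(Lx,Ly)-\mathbf E H(Lx,Ly)$ is a nonzero random variable for arbitrarily large $x$ --- and the contribution of the frozen-region tails, weighted by $x^k$, would also have to be shown negligible. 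This issue is invisible in the paper's algebraic approach, where the integration by parts in \eqref{moment-formula} is an exact identity, but in your analytic approach it is one more estimate you would owe.
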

This proposition is a special case of Theorem \ref{mainresult} (see below).

\subsection{Convergence in the sense of states}
\label{algebra}
Consider a probability space $\Omega$ and a sequence of $k$-dimensional random variables
$( \eta_n^1, \eta_n^2, \dots, \eta_n^k )_{n\ge 1}$ on it that converge, in the sense
of convergence of moments, to a Gaussian random vector $(\eta^1, \dots, \eta^k )$
with zero mean. If we define a \emph{state} as

\begin{equation*}
\langle \xi \rangle_{\Omega} := \mathbf E \xi, \qquad \xi \in L^1 (\Omega),
\end{equation*}
then this convergence can be reformulated as
\begin{multline}
\label{Wick1}
\bigl\langle \eta_n^{i_1} \eta_n^{i_2} \dots \eta_n^{i_l} \bigr\rangle_{\Omega} \xrightarrow[n \to \infty]{}
\sum_{\sigma \in \mathcal{PM}(l)} \prod_{j=1}^{l/2} \bigl\langle \eta^{i_{\sigma(2j-1)}} \eta^{i_{\sigma(2j)}} \bigr\rangle_{\Omega},
\\ \mbox{for any $l\ge 1$ and any } \  (i_1, \dots, i_l) \in \{1,2, \dots, k \}^l,
\end{multline}
where, as above, $\mathcal{PM}(l)$ is the set of perfect matchings on $\{ 1,2, \dots, l \}$, i.e. $\{1,\dots,l\}=\{\sigma(1),\sigma(2)\}\sqcup\dots\sqcup\{\sigma(l-1),\sigma(l)\}$.
Indeed, Wick's formula implies that the right-hand side of \eqref{Wick} contains the
moments of $\eta$.

Let $\mathcal A$ be a $*$-algebra and $ \langle\, \cdot \,\rangle$ be a state
(=linear functional taking non\-ne\-ga\-ti\-ve values at elements of the form $a a^*$) on it.
Let $a_1, a_2, \dots, a_k\in\mathcal A$.

Assume that elements $a_1, \dots, a_k$ and the state on $\mathcal A$ depend on
a large parameter $L$, and we also have a $*$-algebra $\mathbf A$ generated by elements
$\mathbf a_1, \dots, \mathbf a_k$ and a state $\phi$ on it. We say that
$(a_1, \dots, a_k)$ converge to $(\mathbf a_1, \dots, \mathbf a_k)$
\emph{in the sense of states} if

\begin{equation}
\label{shod}
\langle a_{i_1} a_{i_2} \dots a_{i_l} \rangle \xrightarrow[ L \to \infty]{} \phi( \mathbf a_{i_1} \dots \mathbf a_{i_l} ),
\end{equation}
and this holds for any $l \in \mathbb N$ and any index set
$(i_1, i_2, \dots, i_l) \in \{1,2, \dots, k \}^l$.

We say that a collection $\{ a_i \}_{i \in \mathfrak J}\subset \mathcal A$ indexed by an arbitrary set
$\mathfrak J$ and depending on a large parameter $L$, converges in the sense of states to a collection
$\{ \mathbf a_i \}_{i \in \mathfrak J}\subset \mathbf A$ if \eqref{shod} holds for any
finite subset of $\{ a_i \}_{i \in \mathfrak J}$ and the corresponding subset in
$\{ \mathbf a_i \}_{i \in \mathfrak J}$.

\subsection{The algebra of shifted symmetric functions}

In this subsection we review some facts about the algebra of shifted symmetric functions, see \cite{OkoOlsh}, \cite{KerOls}, \cite{IvaOls}.

Let $\Lambda^* (n)$ be the algebra of polynomials in $n$ variables $x_1, x_2, \dots$ which become symmetric in new variables

\begin{equation*}
y_i:=x_i-i+\frac12, \qquad i=1,2, \dots, n.
\end{equation*}

The filtration of $\Lambda^*(n)$ is taken with respect to the degree of a polynomial. Define a map $\Lambda^*(n) \to \Lambda^*(n-1)$ as specializing $x_n=0$. The \emph{algebra of shifted symmetric functions} $\Lambda^*$ is the projective limit (in the category of filtered algebras) of $\Lambda^*(n)$ with respect to these maps.

The algebra $\Lambda^*$ is generated by the algebraically independent system $\{ \mathbf p_k \}_{k=1}^{\infty}$, where
\begin{equation*}
\mathbf p_k (x_1, x_2, \dots) := \sum_{i=1}^{\infty} \left( \left( x_i-i+\frac12 \right)^k - \left(-i+\frac12 \right)^k \right), \ \ \ k=1, 2, \dots.
\end{equation*}

Let $\rho, \lambda \in \mathbb Y:=\mathbb Y_0 \cup \mathbb Y_1 \cup \mathbb Y_2 \cup \dots$, and let $r=|\rho|$, $n=|\lambda|$. In the case $r=n$ by $\chi_{\rho}^{\lambda}$ we denote the value of the irreducible character of $S(n)$ corresponding to $\lambda$ on the conjugacy class indexed by $\rho$. In the case $r<n$ by $\chi_{\rho}^{\lambda}$ we denote the value of the same character on the conjugacy class indexed by $\rho \cup 1^{n-r} = (\rho, 1,1, \dots,1) \in \mathbb Y_n$. Define $p_{\rho}^{\#} : \mathbb Y \to \mathbb R$ by
\begin{equation*}
p_{\rho}^{\#} (\lambda) = \begin{cases}
n(n-1) \dots (n-r+1) \dfrac{ \chi^{\lambda}_{\rho}}{ \dim \lambda }, \qquad & n \ge r; \\
0, \qquad & n <r.
\end{cases}
\end{equation*}

Note that elements of $\Lambda^*$ are well-defined functions on the set of all infinite sequences with finitely many nonzero terms.
It turns out that there is a unique element $\mathbf p_{\rho}^{\#} \in \Lambda^*$ whose values coincide with $p_{\rho}^{\#} (\lambda)$ for all $\lambda \in \mathbb Y$ and $x_i = \lambda_i$.
It is known that the set $\{ \mathbf p_{\rho}^{\#} \}_{\rho \in \mathbb Y}$ is a linear basis in $\Lambda^*$. When $\rho$ consists of a single row, $\rho = (k)$, we denote the element $\mathbf p_{\rho}^{\#}$ by $\mathbf p_k^{\#}$. It is known that the set $\{ \mathbf p_k^{\#} \}_{k=1}^{\infty}$ is an algebraically independent system of generators of $\Lambda^*$.

The \emph{weight} of $\mathbf p_{\rho}^{\#}$ is defined by
\begin{equation*}
wt( \mathbf p_{\rho}^{\#}) = |\rho|+l(\rho).
\end{equation*}
Any element $f \in \Lambda^*$ can be written as a linear combination of $\mathbf p_{\rho}^{\#}$'s with nonzero coefficients; the \emph{weight} $wt(f)$ is defined as the maximal weight of $\mathbf p_{\rho}^{\#}$ in this combination. It turns out (see \cite{IvaOls}) that $wt(\cdot)$ is a filtration on $\Lambda^*$. This filtration is called the \emph{weight filtration}.

We will need the following formula (see \cite[Proposition 3.7]{IvaOls}):

\begin{equation}
\label{change}
\mathbf p_k = \frac{1}{k+1} [ u^{k+1}] \left\{ (1 + \mathbf p_1^{\#} u^2 + \mathbf p_2^{\#} u^3 + \dots )^{k+1} \right\} + \text{lower weight terms},
\end{equation}
where ``lower weight terms'' denotes terms with weight $\le k$, and $[u^k]\{ A(u) \}$ stands for the coefficient of $u^k$ in a formal power series $A(u)$.

\section{Statement of the main result}\label{sc:statement}

\subsection{Characters and states on the universal enveloping algebra}
\label{enveloping}
In this subsection we consider a more general approach to the asymptotic analysis of finite trace representations of $U(\infty)$.

Let $I$ be a finite set of natural numbers, and let $U(I) = \left\{ [u_{ij}]_{i,j \in I} \right\}$ be the group of unitary matrices whose rows and columns are marked by elements of $I$.

Let $\mathfrak{gl} (I) = \left\{ (g_{ij})_{i,j \in I} \right\}$ be the complexified Lie algebra of $U(I)$. It is the algebra of all matrices with complex entries and rows and columns indexed by $I$. Let $\mathcal U ( \mathfrak{gl} (I) )$ be the universal enveloping algebra of $\mathfrak{gl} (I)$, and let $Z( \mathfrak{gl} (I) )$ be the center of $\mathcal U ( \mathfrak{gl} (I) )$.
Denote by
\begin{equation*}
\mathcal U (\mathfrak{gl} (\infty)):=\bigcup_{N\ge 1} \mathcal U ( \mathfrak{gl} (\{1,2, \dots, N \}) )
\end{equation*}
the universal enveloping algebra of $\mathfrak{gl} (\infty)=\cup_{N\ge 1}\mathfrak{gl}(\{1,\dots,N\})$.

Denote by $\mathcal D(I)$ the algebra of left-invariant differential operators on $U(I)$ with complex coefficients. It is well known (see e.g. \cite{Zhe}) that there exists a canonical isomorphism
\begin{equation*}
D_I : \mathcal U (\mathfrak{gl} (I) ) \to \mathcal D (I).
\end{equation*}

Let $\chi$ be a character of $U(\infty)$ (see Section \ref{infinite group}), and let $\{ x_{ij} \}$ be the matrix coordinates. Define a \emph{state} $\langle \,\cdot\,\rangle_\chi$ on $\mathcal U( \mathfrak{gl} (\infty) )$ as follows: For any $X \in \mathcal U( \mathfrak{gl} (\infty) )$

\begin{equation}
\label{sost}
\langle X \rangle_{\chi} = D_I (X) \chi (x_{ij}) |_{x_{ij}=\delta_{ij}},
\qquad X \in \mathcal U (\mathfrak{gl} (I) ).
\end{equation}
Note that this definition is consistent for different choices of $I$. In the finite-dimensional case, formula \eqref{sost} gives a (normalized) trace of the image of $X$ in the representation corresponding to $\chi$.


It turns out that computing the state of $X \in Z (\mathfrak{gl} (I) )$ has a probabilistic meaning.

Let $Sign(I)$ be a copy of $\mathbb {GT}_{|I|}$ corresponding to $I$. We shall denote the coordinates of signatures that parameterize irreducible representations of $U(I)$ as $\lambda_1^{I}$, $\lambda_2^{I}$, $\dots$, $\lambda_{|I|}^{I}$.

Similarly to Section \ref{gelfand-tsetlin}, the restriction of $\chi$ to $U(I)$ and its decomposition on the normalized irreducible characters gives rise to a probability measure on $Sign(I)$.

Define the \emph{shifted power sums} $p_{k,I} : Sign(I) \to \mathbb R $ as
\begin{equation*}
p_{k,I} = \sum_{i=1}^{|I|} \left(\lambda_i^I - i + \tfrac12 \right)^k - \left(-i + \tfrac12 \right)^k, \qquad k \in \mathbb N.
\end{equation*}
Let $\mathbb A (I)$ be the algebra of functions generated by the set $\{ p_{k,I} \}_{k=1}^{\infty}$. It is well known that the functions $p_{k,I}$ for $k=1,2 \dots, |I|$ and fixed $I$ are algebraically independent. Therefore, these functions form a system of generators of $\mathbb A (I)$.

It is known (see e.g. \cite{OkoOlsh}) that there exists a canonical isomorphism
\begin{equation*}
Z (\mathfrak{gl} (I) ) \to \mathbb A (I), \ \ \ I \subset \mathbb N, \ \ \ |I| < \infty.
\end{equation*}

For any central element, the value of the corresponding function at a signature corresponds to the scalar operator that this element turns into in the corresponding representation. One shows that the state $\langle X \rangle_{\chi}$ of an element $X \in Z (\mathfrak{gl} (I))$ equals the expectation of the corresponding to $X$ function in $\mathbb A(I)$ with respect to the probability measure on $Sign(I)$.

We identify the functions from $\mathbb A(I)$ and the elements of $Z (\mathfrak{gl} (I) )$ and use the same notation for them.

The correspondence $p_{k,I} \mapsto \mathbf p_{k}$ and the canonical projection $\Lambda^* \to \Lambda^*(|I|)$ give rise to the natural isomorphism between algebras $\mathbb A(I)$ and $\Lambda^*(|I|)$.
Let $p_{\rho,I}^{\#}$ be the functions (and also the elements of $Z (\mathfrak{gl} (I) )$) corresponding to $\mathbf p_{\rho}^{\#}$ with respect to this isomorphism. Note that this isomorphism also induces a \emph{weight filtration} on $\mathbb A(I)$.

By $\{ E_{ij} \}$ we denote the basis of $\mathfrak{gl} (\infty)$ formed by the matrix units.
Let $E_I$ be the matrix consisting of $E_{ij}$, $i,j \in I$. Consider $p_{\rho,I}^{\#}$ as an element of $Z( \mathfrak{gl} (I) )$. It turns out (see \cite{KerOls}) that it can be written in the form
\begin{equation*}
p_{\rho,I}^{\#} = \tr (E_I^{k_1}) \tr (E_I^{k_2}) \dots \tr (E_I^{k_{l(\rho)}}), \ \ \ \rho = (k_1, k_2, \dots, k_{l(\rho)} ).
\end{equation*}

This implies (see \cite[Eq. (4)]{KerOls}) that for $I \subseteq J$ we have
\begin{equation}
\label{dif-vid}
D_J (p_{\rho,I}^{\#}) = \sum_{i_1, \dots, i_k \in I ; \alpha_1, \dots, \alpha_k \in J} x_{\alpha_1 i_1}
\dots x_{\alpha_k i_k} \partial_{\alpha_1 s(1)} \dots \partial_{\alpha_k i_{s(k)}},
\end{equation}
where $k=|\rho|$ and $s \in S(k)$ is an arbitrary permutation with the cycle structure $\rho$.
Formula \eqref{dif-vid} will be crucial for our further computations.

\subsection{The state corresponding to the one-sided Plancherel character}
\label{onePlancherel}
In the present paper we restrict ourselves to the one-sided Plancherel character with a linearly growing parameter. Recall that this character is defined by the formula
\begin{equation}
\label{char-formul}
\chi (U) = \exp \left(\gamma L \sum_{i=1}^{\infty} (x_{ii}-1) \right),
\end{equation}
where $U=[x_{ij}]_{i,j\ge 1}\in U(\infty)$, $\gamma>0$ is a fixed positive number, and $L$ is a growing parameter.

Let $\mu_{\gamma}$ be the probability measure on the path space $\mathcal P$ that corresponds to this character, and let $\langle \cdot \rangle$ be the state on the universal enveloping algebra $\mathcal U (\mathfrak{gl} (\infty))$ that corresponds to this character.

Using \eqref{sost} and \eqref{dif-vid} it is easy to compute the state of elements $p_{\rho,I}^{\#}$:

\begin{equation}
\label{matojidanie}
\langle p_{\rho,I}^{\#} \rangle = |I|^{l(\rho)} \left( \gamma L \right)^{|\rho|} = \gamma^{|\rho|} \left( \frac{|I|}{L} \right)^{|\rho|} L^{l(\rho) + |\rho|}.
\end{equation}

Recall that the family $\{p_{\rho,I}^{\#} \}_{\rho \in \mathbb Y}$ is a linear basis of $Z( \mathfrak{gl} (I) )$. Hence, for $L \to \infty$ and ${|I|}/{L} \to const>0$ we have
\begin{equation}
\label{w-bound}
\langle f \rangle = O(L^{wt(f)}), \qquad f \in Z(\mathfrak{gl} (I)).
\end{equation}
This fact motivates the use of the weight filtration.

\subsection{Main result}\label{sc:mainresult}
In this section we formulate the main result of the paper.

Let $A=\{ a_n \}_{n \geq 1}$ be a sequence of pairwise distinct natural numbers.
Let $\mathcal P_A$ be a copy of the path space $\mathcal P$ corresponding to $A$.
Given $A$, we define the height function
\begin{equation*}
H_A : \mathbb R_{\ge 0} \times \mathbb R_{\ge 1} \times \mathcal P_A \to \mathbb N
\end{equation*}
by setting
\begin{equation*}
H_A \bigl(x,y, {\{ \lambda^{ \{a_1,a_2, \dots,a_n\} } \}}_{n\ge 0}\bigr) = \sqrt{\pi}\left| \left\{i\in \{1,2, \dots, [y] \} : \lambda_i^{ \{ a_1, \dots, a_{[y]} \} }  - i + \tfrac 12 \ge x
\right\}\right|,
\end{equation*}
where $\lambda_i^{ \{ a_1, \dots, a_{[y]} \} }$ denotes the coordinates of the length $[y]$ signature in the infinite path (such notation will be convenient below).
Under the probability measure $\mu_{\gamma}$ on $\mathcal P_A$,
$H_A(x,y, \cdot)=:H_A(x,y)$ becomes a random function on the probability space $(\mathcal P_A, \mu_{\gamma})$.

So far the sequence $A$ is used as a label of the probability space only; these sequences come into play when we consider the joint distributions of several $H_{A_i}$ below. In terms of $U(\infty)$, the choice of $A$ corresponds to the choice of a tower
\begin{equation*}
U(1) \subset U(2) \subset \dots \subset U(\infty),
\end{equation*}
where $U(k) = U(\{a_1, a_2, \dots, a_k \})$ consists of those elements of $U(\infty)$ whose non-trivial matrix elements are placed in the rows and columns marked by elements of $\{a_1, a_2, \dots, a_k\}$.
It is clear that all such towers are conjugate. Therefore, a character of $U(\infty)$ determines the same height function for all choices of $A$.

Let $\{ A_i \}_{i \in \mathfrak J}$ be a family of sequences of pairwise distinct natural numbers
indexed by a set $\mathfrak J$. Introduce the notation
\begin{equation*}
A_i=\{ a_{i,n} \}_{n \ge 1}, \ \ \
A_{i,m}=\{ a_{i,1}, \dots, a_{i,m} \}.
\end{equation*}
Coordinates $a_{i,j} = a_{i,j}(L)$ may depend on the large parameter $L$.

We say that $\{ A_i \}_{i \in \mathfrak J}$ is \emph{regular} if for any
$i,j \in \mathfrak J$ and any $x,y>0$ there exists a limit
\begin{equation}
\alpha(i,x; j,y) = \lim_{L \to \infty} \frac{|A_{i,[xL]} \cap A_{j,[yL]}|}{L}.
\end{equation}

For example, the following family is regular: $\mathfrak{J} = \{ 1,2,3,4 \}$ with $a_{1,n}=n$,
$a_{2,n}= 2n$, $a_{3,n}=2n+1$, and
\begin{equation*}
a_{4,n} = \begin{cases}
n+L, \ \ \ &n =1,2, \dots, L, \\
n-L, \ \ \ &n=L+1, L+2, \dots, 2L, \\
n, \ \ \ &n \ge 2L+1.
\end{cases}
\end{equation*}

Consider the union of copies of $\mathbb H$ indexed by $\mathfrak J$:
\begin{equation*}
\mathbb H(\mathfrak I) := \bigcup_{i \in \mathfrak I} \mathbb H_i.
\end{equation*}
Define a function $C: \mathbb H(\mathfrak J) \times \mathbb H(\mathfrak J) \to \mathbb R \cup \{+\infty \} $
via

\begin{equation*}
C_{ij} (z,w) = \frac{1}{2 \pi} \ln \left|\frac{\alpha(i, |z|^2; j, |w|^2) - z w}{\alpha(i, |z|^2; j, |w|^2) - z \bar w} \right|,
\ \ \ \ i,j \in \mathfrak J, \ z \in \mathbb H_i,\  w \in \mathbb H_j.
\end{equation*}

\begin{proposition}
For any regular family $\mathfrak{J}$ as above, there exists a generalized Gaussian process on
$\mathbb H( \mathfrak J)$ with covariance kernel $C_{ij}(z,w)$. More exactly, for any finite
set of test functions $f_m(z) \in C_0^\infty (\mathbb H_{i_m})$ and $i_1, \dots, i_M \in \mathfrak J$,
the covariance matrix
\begin{equation}
cov( f_k,f_l) = \int_{\mathbb H} \int_{\mathbb H} f_k(z) f_l(w) C_{i_k i_l} (z,w) dz d \bar z dw d \bar w
\end{equation}
is positive-definite.
\end{proposition}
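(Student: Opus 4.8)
The plan is to reduce the statement to the positive-definiteness (understood in the semidefinite sense used throughout the paper, which is all that the existence criterion for Gaussian families recalled in Section~\ref{resultsBorFer} requires) of the bilinear forms $cov(f_k,f_l)$, and then to establish this by expanding the kernel $C_{ij}$ into a Fourier series in the angular variables of $z$ and $w$. This expansion exposes a completely separated structure in which the only joint dependence of the two points sits inside the overlap function $\alpha$, whose positive-definiteness is visible directly from its definition as a limit of intersection counts.

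First I would rewrite $C_{ij}$ in polar coordinates. Writing $z=|z|e^{i\arg z}$ and $w=|w|e^{i\arg w}$ with $\arg z,\arg w\in(0,\pi)$, and abbreviating $\alpha=\alpha(i,|z|^2;j,|w|^2)$ and $r=|z||w|$, note first that $\alpha\le\min(|z|^2,|w|^2)\le r$, so the ratio $\alpha/r$ lies in $[0,1]$. Feeding $zw=re^{i(\arg z+\arg w)}$ and $z\bar w=re^{i(\arg z-\arg w)}$ into the elementary identity $\ln|\alpha-re^{i\phi}|=\ln r-\sum_{n\ge1}n^{-1}(\alpha/r)^n\cos(n\phi)$ and using $\cos(A-B)-\cos(A+B)=2\sin A\sin B$, one obtains
\begin{equation*}
C_{ij}(z,w)=\frac1\pi\sum_{n=1}^\infty\frac1n\left(\frac{\alpha(i,|z|^2;j,|w|^2)}{|z||w|}\right)^n\sin(n\arg z)\sin(n\arg w).
\end{equation*}
On the diagonal $i=j$ this recovers $C_{ii}=G$, since there $\alpha=\min(|z|^2,|w|^2)$. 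The crucial point is that the only coupling between the two points now resides in the factor $\alpha^n$; the remaining factors $\sin(n\arg z)/|z|^n$ and $\sin(n\arg w)/|w|^n$ are scalar prefactors attached to a single point.

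Next I would observe that $\alpha$ is itself a positive-definite kernel on $\mathfrak J\times\mathbb R_{>0}$. Indeed, for fixed $L$ let $v^{(L)}_{i,s}\in\ell^2(\mathbb N)$ be the indicator vector of the set $A_{i,[sL]}$; then $\tfrac1L\langle v^{(L)}_{i,s},v^{(L)}_{j,t}\rangle=\tfrac1L|A_{i,[sL]}\cap A_{j,[tL]}|$ is an entry of a Gram matrix, hence a positive-definite kernel, and $\alpha(i,s;j,t)$ is its pointwise limit as $L\to\infty$. By the Schur product theorem each entrywise power $\alpha^n$ is again positive-definite. Conjugating by the scalar prefactors (equivalently, replacing each test function $f_m(z)$ by $f_m(z)\sin(n\arg z)/|z|^n$) preserves positive-definiteness, so for every $n$ the corresponding term of the series contributes a positive-definite kernel on $\mathbb H(\mathfrak J)$. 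Summing with the positive weights $1/(\pi n)$ and integrating against the test functions then shows that the quadratic form $\sum_{k,l}c_kc_l\,cov(f_k,f_l)$ is nonnegative, i.e. $[cov(f_k,f_l)]$ is positive-definite; existence of the Gaussian process follows.

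The hard part will be the analytic justification rather than the algebra: one must verify the Fourier expansion and, more delicately, the interchange of the summation over $n$ with the double integral against the test functions. The ratio $\alpha/r$ attains the value $1$ exactly on the measure-zero locus where $|z|^2=|w|^2$ and the two towers overlap completely up to that level, which is precisely the logarithmic diagonal singularity of $G$. This is handled by the uniform bound $\alpha/r\le1$ together with the local integrability of the $\ln$-kernel against smooth compactly supported test functions; away from that locus the ratio is strictly less than $1$ and the series converges geometrically. The conceptual heart of the argument — the step that makes the mixed logarithmic kernel manifestly positive-definite — is the recognition that $\alpha$ is an overlap (Gram) function.
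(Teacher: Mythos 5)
Your argument is correct, but there is nothing in the paper to compare it against: the paper's entire ``proof'' of this proposition is the citation ``See [Bor, Proposition~1]'', so your write-up is a self-contained substitute for an external reference (and the Gram-matrix/Schur-product mechanism is the natural one for a kernel of this shape). The three ingredients all check out. The expansion $C_{ij}(z,w)=\pi^{-1}\sum_{n\ge1}n^{-1}\left(\alpha/(|z||w|)\right)^{n}\sin(n\arg z)\sin(n\arg w)$ follows from $\ln|1-\rho e^{i\phi}|=-\sum_{n\ge1}\rho^{n}\cos(n\phi)/n$ once one has $\alpha(i,x;j,y)\le\min(x,y)$, which holds because $|A_{i,[xL]}\cap A_{j,[yL]}|\le\min([xL],[yL])$ and the $a_{i,n}$ are pairwise distinct. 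The kernel $\alpha$ is the pointwise limit of the Gram kernels $L^{-1}\langle\mathbf 1_{A_{i,[sL]}},\mathbf 1_{A_{j,[tL]}}\rangle$ on $\bigsqcup_i(\{i\}\times\mathbb R_{>0})$, hence positive semidefinite, and so are its Schur powers and their conjugates by the scalar factors $\sin(n\arg z)/|z|^{n}$; the sign of those factors is irrelevant. For the analytic step you flag, the clean dominating function for the partial sums is $\pi^{-1}\left(-\ln\left(1-\alpha/(|z||w|)\right)\right)$, which by $\alpha\le\min(|z|^2,|w|^2)$ is at most $\ln\max(|z|,|w|)-\ln\bigl||z|-|w|\bigr|$ and hence locally integrable; the set where $\alpha/(|z||w|)=1$ is contained in $\{|z|=|w|\}$ and has measure zero, so dominated convergence applies. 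Two small points worth recording explicitly: $\alpha$ is $1$-Lipschitz in each of its real arguments (the intersection count changes by at most $[xL]-[x'L]$), so each term of the series is a continuous kernel and the passage from pointwise positive semidefiniteness to nonnegativity of $\sum_{k,l}c_kc_l\,cov(f_k,f_l)$ via Riemann sums is legitimate; and ``positive-definite'' must indeed be read as positive semidefinite throughout, since degenerate covariance matrices can occur (e.g.\ for disjoint towers).
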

\begin{proof} See \cite[Proposition 1]{Bor}.
\end{proof}

Let us denote this Gaussian process as $\mathfrak G_{ \{ A_i \}_{i \in \mathfrak J} }$.
Its restriction to a single half-plane $\mathbb H_i$ is the GFF with zero boundary conditions introduced above, because
\begin{equation*}
C_{ii} (z,w) = - \frac{1}{ 2 \pi} \ln \left| \frac{z-w}{z - \bar w} \right|, \qquad z,w \in \mathbb H_i, \quad i\in \mathfrak J.
\end{equation*}

As in Section \ref{resultsBorFer}, let us carry $H_A(x,y)$ over to $\mathbb H$ --- define
\begin{equation*}
H_A^{\Omega} (z) = H_A ( L x(z), L y(z) ) , \qquad z \in \mathbb H.
\end{equation*}

As was mentioned above (see Theorem \ref{thBorFer}), the fluctuations
\begin{equation}
\label{imp}
\mathcal H_i (z) := H_{A_i}^{\Omega} (z) - \mathbf E H_{A_i}^{\Omega} (z) , \qquad i \in \mathfrak J, \ z \in \mathbb H_i,
\end{equation}
for any fixed $i$ converge to the GFF. The choice of $A_i$ is not important here because the problem reduces to the case $A = \mathbb N$ by an appropriate conjugation by a permutation matrix.

The main goal of this paper is to study the \emph{joint} fluctuations \eqref{imp} for different
$i$. The joint fluctuations are understood as follows.
Define the moments of the random height function as
\begin{equation}
\label{moment}
M_{i,y,k} := \int_{-\infty}^{\infty} x^k (H_{A_i}( Lx, Ly) - \mathbf E H_{A_i} (Lx, Ly) ) dx.
\end{equation}

It turns out (see \eqref{moment-formula} below) that $M_{i,y,k}$ belongs to $\mathbb A (A_{i, [Ly]})$, and thus it corresponds
to an element of $Z(\mathfrak {gl} (A_{i, [Ly]} ))$ (see Section \ref{enveloping}); denote this element by the same symbol.
Note that all such elements  $M_{i,y,k}$ for all $i,y,k$ belong to the ambient algebra
$\mathcal U (\mathfrak{gl} (\infty))$, and we also have the state $\langle \,\cdot\, \rangle_{\gamma}$ defined
on this ambient algebra (see Section \ref{onePlancherel}). Thus, we can talk about convergence of such elements in the sense of states,
see Section \ref{algebra}.
We are interested in the limit as $L \to \infty$.

We prove that the family $\{ \mathcal H_i \}_{i \in \mathfrak{J} }$ converges to the generalized
Gaussian process $\mathfrak G_{ \{ A_i \} _{ i \in \mathfrak J } }$.
Define the moments of $\mathfrak G_{ \{ A_i \}_{i \in \mathfrak J}}$ by
\begin{equation*}
\mathcal M_{i,y,k} = \int_{z \in \mathbb H;\, |z|^2 = \frac{y}{\gamma} } x(z)^k \mathfrak G_{A_i} (z) \frac{d x(z)}{dz} dz.
\end{equation*}

\begin{theorem}
\label{mainresult}
As $L \to \infty$, for any regular family of sequences $\{ A_i \}_{i \in \mathfrak J}$ the moments
$\{M_{i,y,k}\}_{i \in \mathfrak J, y>0, k \in \mathbb Z_{\ge 0}}$ converge, in the sense of states, to the
moments $\{ \mathcal M_{i,y,k} \}_{i \in \mathfrak J, y>0, k \in \mathbb Z_{\ge 0}}$.
\end{theorem}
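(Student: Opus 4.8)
The plan is to compute the joint states $\langle M_{i_1,y_1,k_1}\cdots M_{i_l,y_l,k_l}\rangle$ directly and to show that, as $L\to\infty$, they satisfy the Wick formula \eqref{Wick1} with the covariance dictated by the process $\mathfrak G_{\{A_i\}}$; by the definition \eqref{shod} of convergence in the sense of states this is exactly the assertion of the theorem, the limiting observables $\mathcal M_{i,y,k}$ being centered and jointly Gaussian (so that the state $\phi$ on the target algebra is a Gaussian expectation and the right-hand side of \eqref{shod} is a sum over perfect matchings). I would first rewrite each $M_{i,y,k}$ in terms of shifted power sums. Since $H_{A_i}(Lx,Ly)$ is, in $x$, a step function with jumps at the points $\lambda_i-i+\tfrac12$, integrating $x^k$ by parts identifies $M_{i,y,k}$ with $\tfrac{\sqrt\pi}{k+1}\bigl(p_{k+1,I}-\langle p_{k+1,I}\rangle\bigr)$, where $I=A_{i,[Ly]}$ (the divergent boundary terms cancel precisely because of the centering). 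Passing to the $\mathbf p^{\#}_{\rho}$-basis via the change of variables \eqref{change} then exhibits $M_{i,y,k}$ as a centered linear combination of the central elements $p^{\#}_{\rho,I}$, with a distinguished leading term of weight $k+2$.

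The technical core is the computation of the state of a product $p^{\#}_{\rho_1,I_1}\cdots p^{\#}_{\rho_m,I_m}$ of these non-commuting central elements attached to different towers $I_1,\dots,I_m$. Here I would apply the differential-operator formula \eqref{dif-vid} to each factor and let the resulting operators act, in order, on the explicit Plancherel character \eqref{char-formul}, evaluated at $x_{\alpha\beta}=\delta_{\alpha\beta}$ as prescribed by \eqref{sost}. Because $\chi$ is the exponential $\exp(\gamma L\sum_\alpha(x_{\alpha\alpha}-1))$, each derivative $\partial_{\alpha\beta}$ either hits the exponent, producing a factor $\gamma L\,\delta_{\alpha\beta}$, or it hits one of the coordinates $x_{\alpha' i'}$ that a factor standing to its left has placed in front of it. This second possibility is the manifestation of non-commutativity: it creates a \emph{contraction} between two distinct factors, and such a contraction is supported on an index lying in the intersection $I_r\cap I_s$ of the corresponding towers. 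Organizing the whole evaluation as a sum over contraction patterns is the bookkeeping step; the diagonal self-contractions reproduce the single-tower formula \eqref{matojidanie}, while the cross-contractions, and only they, couple different indices $i$.

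The decisive step is the power counting, organized through joint cumulants rather than moments, because the weight bound \eqref{w-bound} shows that each $p_{k+1,I}$ has mean of order $L^{k+2}$ while the covariance must stay of order $1$ in the limit (the GFF fluctuations require no rescaling, cf. Theorem \ref{thBorFer}); the huge self-contracted contributions are exactly those that cancel upon centering. I would show that the connected correlation of a contraction pattern is governed by its number of connected components: patterns leaving a factor uncontracted vanish by centering (the first cumulants are zero), patterns joining three or more factors are of strictly lower order in $L$ and disappear, and only pairings of the factors survive at the leading, $O(1)$, order. This is the statement that all cumulants of order at least three vanish, each pairwise covariance converging. In this estimate the quantity $L^{-1}|A_{i,[Ly]}\cap A_{j,[Ly']}|$ enters each cross-contraction, and the regularity hypothesis guarantees that it converges; this limiting intersection density is precisely what enters the limiting pairwise covariance and makes distinct GFFs correlated rather than independent.

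Finally I would identify the limiting pairwise covariance of $M_{i,y,k}$ and $M_{j,y',k'}$ with $\mathbf E(\mathcal M_{i,y,k}\mathcal M_{j,y',k'})$, i.e.\ with the integral of $x(z)^k x(w)^{k'}C_{ij}(z,w)$ over the two arcs cut out by the conditions $|z|^2=y/\gamma$ and $|w|^2=y'/\gamma$; the explicit evaluation matching the combinatorial answer to this integral is carried out in Section \ref{sc:covariance}. Combining the vanishing of the higher cumulants with the convergence of the covariance and invoking the Wick criterion \eqref{Wick1} then yields convergence of all joint moments to the Gaussian moments, which is the convergence \eqref{shod} asserted by Theorem \ref{mainresult}; the deferred cumulant estimates occupy Section \ref{sc:proof}. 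I expect the principal obstacle to lie in the combinatorics and power counting of the cross-contractions of the second and third steps: proving that the self-contracted ``mean'' contributions cancel and that every non-pairing connected pattern is negligible, so that exactly the Gaussian pairing structure survives, is where the non-commutativity that makes the problem interesting is concentrated.
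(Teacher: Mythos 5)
Your plan follows the paper's own route essentially step for step: integration by parts reduces $M_{i,y,k}$ to centered shifted power sums (the paper's \eqref{moment-formula} and Theorem \ref{th2}), the change of basis \eqref{change} reduces to the $p^{\#}_{\rho,I}$, the state of a product is evaluated via \eqref{dif-vid} acting on \eqref{char-formul}, the contraction/power-counting analysis showing that only disjoint pairings of factors survive at leading order is exactly the content of Propositions \ref{52} and \ref{53} (the paper phrases this as verifying the Wick formula for moments rather than vanishing of higher cumulants, which is equivalent), and the covariance is matched to the kernel $C_{ij}$ by the contour-integral computation of Proposition \ref{prop41}. The proposal is a correct outline of the same argument, with the hard combinatorial estimates (the paper's graph-theoretic Lemmas \ref{lemma52}--\ref{lemma56}) correctly identified as the core difficulty but left to be carried out.
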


Thus, in the $L\to\infty$ limit, the noncommutativity disappears
(limiting algebra $\mathbf A$ is commutative), and yet the random fields $\mathcal H_i$ for different
$i$'s are not independent.

Let $u=L x$. The definition of the height function implies
\begin{equation*}
\frac{d}{d u} H_{A_i} (u, [Ly]) = - \sqrt{\pi} \sum_{s=1}^{[Ly]} \delta \left( u - \left(\lambda_s^{A_{i,[Ly]}} -s + \tfrac 12 \right) \right).
\end{equation*}

Recall that the \emph{shifted power sums} are given by the formula
\begin{equation*}
p_{k,I} = \sum_{i=1}^{|I|} \left( \left(\lambda_i^I - i +\tfrac12 \right)^k - \left(-i +\tfrac12 \right)^k \right) , \ \ \ \ I \subset \mathbb N.
\end{equation*}
One shows that $p_{k,I} \in \mathbb A(I)$, and hence they correspond to
certain elements of $Z(\mathfrak {gl} (I))$ (see Section \ref{enveloping}) that we will denote by the same symbol.

Integrating \eqref{moment} by parts shows that $M_{i,y,k}$ can be rewritten as
\begin{multline}
\label{moment-formula}
\frac{L^{-(k+1)} \sqrt{\pi}}{ k+1} \left( \sum_{s=1}^{[Ly]} \left(\lambda_s^{A_{i,[Ly]}} -s + \tfrac 12 \right)^{k+1}
- \mathbf E \sum_{s=1}^{[Ly]} \left(\lambda_s^{A_{i,[Ly]}} -s + \tfrac 12 \right)^{k+1} \right) \\ = \frac{L^{-(k+1)} \sqrt{\pi}}{ k+1}
( p_{k+1,I} - \mathbf E p_{k+1,I}).
\end{multline}
Thus, Theorem \ref{mainresult} can be reformulated as follows.

\begin{theorem}
\label{th2}
Let $k_1, \dots, k_m \ge 1$ and $I_1, \dots, I_m$ be finite subsets of $\mathbb N$ that
may depend on the large parameter $L$ in such a way that there exist limits
\begin{equation*}
\eta_r=\lim_{L \to \infty} \frac{|I_r|}{L} > 0, \ \ \ \ c_{rs} = \lim_{L \to \infty} \frac{|I_r \cap I_s|}{L},\qquad 1\le r,s\le m.
\end{equation*}

Then, as $L \to \infty$, the collection
\begin{equation*}
\left(L^{-k_r} \left( p_{k_r, I_r}- \mathbf E p_{k_r, I_r} \right) \right)_{r=1}^{m}
\end{equation*}
of elements of\, $\mathcal U (\mathfrak{gl} (\infty))$ converges in the sense of states, cf. \eqref{shod}, to the Gaussian vector
$(\xi_1, \dots, \xi_m)$ with zero mean and covariance
\begin{multline*}
\mathbf {E} \xi_r \xi_s =
\frac{ k_r k_s}{\pi} \oint_{|z|^2 = \frac{\eta_r}{\gamma}; \mathfrak I (z)>0} \oint_{|w|^2 = \frac{\eta_s}{\gamma} ;
\mathfrak I(w)>0}
(x(z))^{k_r-1} (x(w))^{k_s-1} \\ \times
\frac{1}{2 \pi} \ln \left|\frac{c_{rs}/ \gamma - z w}{c_{rs}/ \gamma - z \bar w}\right| \frac{d(x(z))}{dz}
\frac{d(x(w))}{dw} dz dw.
\end{multline*}

\end{theorem}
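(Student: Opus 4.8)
The plan is to prove Theorem \ref{th2} (equivalently Theorem \ref{mainresult}) by the method of moments. Since the limiting vector $(\xi_1,\dots,\xi_m)$ is centered Gaussian, it suffices to show that every rescaled joint state $\prod_r L^{-k_r}\,\bigl\langle \prod_r (p_{k_r,I_r} - \mathbf E\,p_{k_r,I_r})\bigr\rangle$ converges and that the limit obeys Wick's rule \eqref{Wick1}, i.e. equals the sum over perfect matchings of the limiting pairwise covariances. Establishing these two facts — convergence of the second moments to the stated covariance, and factorization of all higher moments into matchings — is exactly convergence in the sense of states, cf. \eqref{shod}.

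First I would pass to the basis $\{p_{\rho,I}^{\#}\}$. Using the change-of-generators formula \eqref{change}, each $p_{k_r,I_r}$ is rewritten as a polynomial in the $p_{j,I_r}^{\#}$, whose top-weight component (weight $k_r+1$) is an explicit sum of products $\prod_a p_{j_a,I_r}^{\#}$ with $\sum_a (j_a+1)=k_r+1$; the remaining terms have strictly smaller weight and, by the bound \eqref{w-bound}, are negligible after rescaling. Then, using the differential-operator presentation \eqref{dif-vid} together with the explicit exponential character \eqref{char-formul}, I would expand the joint state as a sum over contraction schemes. Concretely, applying the operators in order to $\exp\bigl(\gamma L\sum_m (x_{mm}-1)\bigr)$ and setting $x_{ij}=\delta_{ij}$, each derivative $\partial_{\alpha i}$ either acts on the exponential (producing a factor $\gamma L$ and forcing $\alpha=i$) or contracts a variable $x_{\alpha' i'}$ created by another operator (forcing $\alpha=\alpha'$, $i=i'$). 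This yields a Feynman-type graphical sum in which edges record contractions between operators and vertices carry the summations over the index sets $I_r$.

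The heart of the argument is the power counting. Each free index summed over $I_r$ contributes a factor $\sim \eta_r L$, each hit on the exponential a factor $\gamma L$, and each contraction a Kronecker delta that eliminates one summation; moreover a contraction between operators $r$ and $s$ restricts the shared index to $I_r\cap I_s$, contributing $\sim c_{rs} L$. Organizing the graphical sum by its connected components, one finds that the fully disconnected graph reproduces $\prod_r \mathbf E\,p_{k_r,I_r}$ (removed by centering), a graph all of whose connected components join exactly two operators contributes at the order $\prod_r L^{k_r}$ that exactly survives the rescaling $\prod_r L^{-k_r}$, and any connected component joining three or more operators is suppressed by a strictly negative power of $L$. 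This last estimate — equivalently, the vanishing of all joint cumulants of order $\ge 3$ — is what forces the limit to be Gaussian and yields Wick's rule; I expect it to be the main technical obstacle, since it requires a uniform bound on the $L$-degree of an arbitrary connected contraction scheme built from operators of different weights and overlapping index sets.

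Finally I would compute the surviving pairwise contribution and match it to the contour integral. For a single pair $(r,s)$ the connected contractions, resummed against the top-weight expansion coming from \eqref{change}, produce a generating function in two variables whose coefficients involve only $c_{rs}$ and $\gamma$; passing to the spectral coordinates $x(z),y(z)$ and performing the geometric summation identifies it with the logarithmic kernel $\tfrac{1}{2\pi}\ln\bigl|\tfrac{c_{rs}/\gamma - zw}{c_{rs}/\gamma - z\bar w}\bigr|$, giving the double contour integral in the statement. The diagonal case $c_{rs}=\eta_r$ recovers the Gaussian Free Field kernel of \cite{BorFer}, consistent with Theorem \ref{thBorFer}. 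The conversion of the combinatorial covariance into this closed form — essentially the computation of Section \ref{sc:covariance} — is the second delicate point, as it is precisely where the correlated-GFF structure emerges.
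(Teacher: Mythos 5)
Your plan follows essentially the same route as the paper's own proof: reduction to the $p_{\rho,I}^{\#}$ basis via \eqref{change} with lower-weight terms killed by \eqref{w-bound}, expansion of the state through the differential-operator presentation \eqref{dif-vid} into a graphical sum, power counting showing that only pairings of factors with matching supports survive at order $\prod_r L^{k_r}$ (the paper's Lemmas \ref{lemma51}--\ref{lemma56}), and the generating-function/contour-integral computation of Section \ref{sc:covariance} for the covariance. The two technical obstacles you flag are exactly the ones the paper resolves in Sections \ref{section5} and \ref{sc:covariance}, so the proposal is a faithful outline of the published argument.
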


\section{Computation of covariance}\label{sc:covariance}
In this section we compute the covariance of $p_{k,I_1}$ and $p_{l,I_2}$ (see Theorem \ref{th2}).
At first we find the covariance of $p_{k,I}^{\#}$'s, then the covariance of $p_{k,I}$'s is found with the use of \eqref{change}. The main result of this section is Proposition \ref{prop41}.

Let $I=I(L) \subset \mathbb N$, $I_1=I_1(L) \subset \mathbb N$, $I_2=I_2(L) \subset \mathbb N$
be finite sets such that the following limits exist
\begin{equation*}
\eta = \lim_{L \to \infty} \frac{|I|}{L}, \ \ \eta_1 = \lim_{L \to \infty} \frac{|I_1|}{L}, \ \ \eta_2 = \lim_{L \to \infty} \frac{|I_2|}{L}, \ \
c = \lim_{L \to \infty} \frac{|I_1 \cap I_2|}{L}.
\end{equation*}
Recall that the state $\langle \cdot \rangle$ also depends on $L$, see Section \ref{onePlancherel}.
\begin{lemma}
\label{41}
With the above notations we have
\begin{equation}
\label{lem1}
\left\langle \frac{p_{k,I_1}^{\#} - \langle p_{k,I_1}^{\#} \rangle }{L^k} \cdot \frac{p_{l,I_2}^{\#} -\langle p_{l,I_2}^{\#} \rangle}{L^l} \right\rangle
\xrightarrow[L \to \infty]{} \sum_{n=1}^{\min(k,l)} n \binom{k}{n} \binom{l}{n} c^n \gamma^{k+l-n}.
\end{equation}
\end{lemma}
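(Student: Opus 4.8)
The plan is to compute the left-hand side directly from the differential-operator realization \eqref{dif-vid}, using that $D_J$ is an algebra homomorphism and that the character \eqref{char-formul} is an exponential. Fix a finite $J \supseteq I_1 \cup I_2$ and write $D = D_J(p_{k,I_1}^{\#})$, $D' = D_J(p_{l,I_2}^{\#})$, each as the explicit sum \eqref{dif-vid} with $s$ a single $k$-cycle and $t$ a single $l$-cycle. Since $D_J(p_{k,I_1}^{\#} p_{l,I_2}^{\#}) = D \circ D'$, the state $\langle p_{k,I_1}^{\#} p_{l,I_2}^{\#}\rangle$ equals $(D\circ D')\chi$ evaluated at $x_{ij}=\delta_{ij}$. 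The key simplification is $\partial_{\alpha\beta}\chi = \gamma L\,\delta_{\alpha\beta}\,\chi$, so each derivative landing on $\chi$ yields a factor $\gamma L$ and a Kronecker delta. First I would apply $D'$: all $l$ of its derivatives hit $\chi$, collapsing $D'\chi$ into $(\gamma L)^l \sum_{\vec\jmath \in I_2^l}\bigl(\prod_b x_{j_{t(b)} j_b}\bigr)\chi$. Then I would apply $D$ by the Leibniz rule: each derivative $\partial_{\alpha_a, i_{s(a)}}$ either hits $\chi$ (factor $\gamma L$, forcing $\alpha_a = i_{s(a)}$) or contracts with exactly one of the linear factors $x_{j_{t(b)} j_b}$ (forcing $\alpha_a = j_{t(b)}$ and $i_{s(a)} = j_b$). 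This expresses the answer as a sum over \emph{contraction patterns}, organized by the number $n$ of cross-contractions between $D$ and $D'$.

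Next I would carry out the power counting in $L$. Using the diagonal evaluation $x_{\alpha_a i_a}\mapsto \delta_{\alpha_a i_a}$ to eliminate the $\alpha$-variables via $\alpha_a = i_a$, the expression reduces to a constrained sum over $i_1,\dots,i_k \in I_1$ and $j_1,\dots,j_l \in I_2$ with identifications $i_a = i_{a+1}$ at the $k-n$ uncontracted positions of $D$, $j_b = j_{b+1}$ at the $l-n$ uncontracted positions of $D'$, and the $2n$ cross-identifications $i_a = j_{t(b)}$, $i_{a+1} = j_b$. A pattern with $n$ cross-contractions carries the explicit factor $(\gamma L)^{k+l-n}$, while the index sum contributes $L^{C}$, where $C$ is the number of connected components of the resulting identification graph on the $i$'s and $j$'s. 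The $n=0$ term factorizes and reproduces $\langle p_{k,I_1}^{\#}\rangle\langle p_{l,I_2}^{\#}\rangle$, so it cancels against the product of means on the left-hand side. For $n \ge 1$ I would prove $C \le n$; since dividing by $L^{k+l}$ leaves $L^{C-n}$, only patterns with $C = n$ contribute in the limit.

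The decisive observation is that the cycles $s$ and $t$ organize the surviving indices into blocks: cutting $n$ positions breaks each cycle into exactly $n$ arcs, giving $n$ \emph{$i$-blocks} (all $i$'s in a block equal) and $n$ \emph{$j$-blocks}, and the cross-identifications form a bipartite $2$-regular multigraph on these $2n$ block-vertices. Such a graph is a disjoint union of cycles, so $C = n$ holds precisely when it is $n$ disjoint double edges, i.e.\ a matching of $i$-blocks to $j$-blocks in which both cross-links at the two ends of a block land on the same partner block. Each such component merges one $i$-block with one $j$-block, hence ranges over $I_1 \cap I_2$, so the $n$ components yield $|I_1\cap I_2|^n \sim (cL)^n$; combined with $(\gamma L)^{k+l-n}$ and the division by $L^{k+l}$ this gives $c^n \gamma^{k+l-n}$. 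Finally I would count these leading patterns: choosing the cut positions gives $\binom kn \binom ln$, and the double-edge condition forces the bijection $\phi$ to match the cyclic order of the $n$ $i$-blocks with that of the $n$ $j$-blocks, leaving exactly $n$ admissible rotations; hence the count is $n\binom kn\binom ln$, and summing over $n \ge 1$ produces the right-hand side.

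The main obstacle will be the bookkeeping in the last two steps: rigorously tracking how the Kronecker deltas from \eqref{dif-vid}, from the Leibniz contractions, and from the diagonal evaluation combine, proving cleanly that $C \le n$ with the stated equality characterization through the passage to the $2$-regular block multigraph, and establishing that only the double-edge matchings (not longer cycles) survive to leading order, with the precise rotational enumeration producing the factor $n$. One must also verify that at leading order the constrained index sums factorize into a product of cardinalities of $I_1$, $I_2$, and $I_1 \cap I_2$ with negligible corrections from accidental coincidences among indices, which is exactly where the hypotheses $|I_r|/L \to \eta_r$ and $|I_1 \cap I_2|/L \to c$ are used.
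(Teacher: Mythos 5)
Your proposal is correct and follows essentially the same route as the paper: both compute $\langle p^{\#}_{k,I_1}p^{\#}_{l,I_2}\rangle$ from the differential-operator formula \eqref{dif-vid} applied to the exponential character, isolate the leading-order contraction patterns by comparing the number of connected components of the index-identification graph to the number of cross-contractions, and arrive at the identical enumeration of $n\binom{k}{n}\binom{l}{n}$ surviving patterns, each contributing $c^n\gamma^{k+l-n}$. The only difference is organizational: the paper splits each centered variable into diagonal (coverage-one) and off-diagonal parts and phrases the count in terms of regular gluings of $\partial$-cycles with $x$-cycles, reusing the graph machinery of Section \ref{51section}, whereas you treat all $n\ge 1$ uniformly via the Leibniz/Wick contraction bookkeeping and the bipartite $2$-regular block multigraph --- a self-contained repackaging of the same combinatorics.
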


\begin{proof}
The proof of this Lemma is given in Section \ref{proof41}. It is based on the explicit formula \eqref{dif-vid} for $ D (p_{k,I}^{\#})$.
\end{proof}

Let $u$ and $v$ be formal variables. Let us find the generating function for the right-hand side of \eqref{lem1}.
\begin{lemma}
\label{42}
We have
\begin{equation}
\label{eq42}
\sum_{k=1}^{\infty} \sum_{l=1}^{\infty} \left( \sum_{n=1}^{\min(k,l)} n \binom{k}{n} \binom{l}{n} c^n \gamma^{k+l-n} \right) u^k v^l = \frac{c \gamma u v}{((1 - \gamma u)(1-\gamma v) - c \gamma u v)^2}.
\end{equation}
\end{lemma}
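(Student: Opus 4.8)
My plan is to verify the closed-form generating function identity purely by formal-power-series manipulation, treating the right-hand side as the target and expanding the left-hand side into a product of two independent geometric-type series. First I would isolate the inner sum over $n$. The key observation is that the binomial coefficients $\binom{k}{n}\binom{l}{n}$ together with the factor $n$ suggest that everything factors through the variables $u$ and $v$ separately once $n$ is fixed. So I would swap the order of summation and write
\begin{equation*}
\sum_{k,l\ge 1}\ \sum_{n=1}^{\min(k,l)} n\binom{k}{n}\binom{l}{n} c^n \gamma^{k+l-n} u^k v^l
= \sum_{n\ge 1} n\, \frac{c^n}{\gamma^n}\left(\sum_{k\ge n}\binom{k}{n}(\gamma u)^k\right)\left(\sum_{l\ge n}\binom{l}{n}(\gamma v)^l\right),
\end{equation*}
which is legitimate because for fixed $n$ the constraint $\min(k,l)\ge n$ decouples into $k\ge n$ and $l\ge n$.

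Next I would evaluate the two inner geometric sums using the standard identity $\sum_{k\ge n}\binom{k}{n}t^k = t^n/(1-t)^{n+1}$, valid as a formal power series (it is the generalized binomial series for $(1-t)^{-(n+1)}$ shifted by $t^n$). Substituting $t=\gamma u$ and $t=\gamma v$ turns each factor into $(\gamma u)^n/(1-\gamma u)^{n+1}$ and $(\gamma v)^n/(1-\gamma v)^{n+1}$. Collecting the powers of $n$, the whole expression becomes
\begin{equation*}
\frac{1}{(1-\gamma u)(1-\gamma v)}\sum_{n\ge 1} n\left(\frac{c\gamma u v}{(1-\gamma u)(1-\gamma v)}\right)^n .
\end{equation*}
Writing $w := c\gamma u v/\bigl((1-\gamma u)(1-\gamma v)\bigr)$, the remaining sum is $\sum_{n\ge 1} n\, w^n = w/(1-w)^2$, the classic derivative-of-geometric-series identity.

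Finally I would substitute back and simplify. The prefactor $1/\bigl((1-\gamma u)(1-\gamma v)\bigr)$ times $w/(1-w)^2$ yields, after clearing the denominator $(1-\gamma u)(1-\gamma v)$ inside $(1-w)^2$, exactly $c\gamma u v/\bigl((1-\gamma u)(1-\gamma v)-c\gamma u v\bigr)^2$, matching the right-hand side of \eqref{eq42}. The only point requiring a word of care is that all these manipulations are justified at the level of formal power series in $u,v$ (with coefficients in $\mathbb{Q}[c,\gamma]$): the interchange of summation and the use of $w/(1-w)^2$ are valid because $w$ has no constant term, so every coefficient of $u^k v^l$ is extracted from a finite sum. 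This formality is the only subtlety; the algebra itself is routine, and there is no genuine analytic obstacle. I would state explicitly that the identity holds in $\mathbb{Q}[c,\gamma][[u,v]]$.
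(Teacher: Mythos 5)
Your proof is correct, and it follows essentially the same route as the paper: both arguments decouple the constraint $\min(k,l)\ge n$, evaluate the inner sums via $\sum_{k\ge n}\binom{k}{n}t^k=t^n/(1-t)^{n+1}$, and then resum the resulting geometric-type series in $n$ (the paper introduces the factor $n$ by differentiating in the auxiliary variable $\mathbf x_3=c/\gamma$ after summing, whereas you use $\sum_{n\ge 1}nw^n=w/(1-w)^2$ directly, which is an immaterial difference). Your explicit remark that everything takes place in $\mathbb{Q}[c,\gamma][[u,v]]$ is a welcome clarification the paper leaves implicit.
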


\begin{proof}
Define $\mathbf x_1 = \gamma u$, $\mathbf x_2 = \gamma v$, $\mathbf x_3 = \dfrac{c}{\gamma}$, and let
$\mathbf x$, $\mathbf y$ be formal variables. It is clear that
\begin{equation*}
1 + \sum_{k=1}^{\infty} \sum_{n=1}^k \binom{k}{n} \mathbf y^n \mathbf x^k = \frac{1}{1-(1+\mathbf y) \mathbf x}.
\end{equation*}
Let us differentiate with respect to $\mathbf{y}$ $n$ times. We obtain
\begin{equation*}
\sum_{k=n}^{\infty} \binom{k}{n} \mathbf x^k = \frac{\mathbf x^n}{(1-\mathbf x)^{n+1}}.
\end{equation*}
Therefore, we have
\begin{multline*}
\frac{1}{(1- \mathbf x_1)(1-\mathbf x_2)} + \sum_{n=1}^{\infty}
\sum_{k=n}^{\infty} \sum_{l=n}^{\infty} \binom{k}{n} \binom{l}{n}
\mathbf x_1^k \mathbf x_2^l \mathbf x_3^n =
\frac{1}{(1- \mathbf x_1)(1-\mathbf x_2)} \\ + \sum_{n=1}^{\infty} \frac{ \mathbf x_1^n}{(1-\mathbf x_1)^{n+1}} \frac { \mathbf x_2^n}{(1-\mathbf x_2)^{n+1}} \mathbf x_3^n =
\frac{1}{(1- \mathbf x_1) (1-\mathbf x_2) - \mathbf x_1 \mathbf x_2 \mathbf x_3}.
\end{multline*}
Differentiation with respect to $\mathbf{x_3}$ yields the desired claim.
\end{proof}

In what follows we use certain facts that will only be proved in Section \ref{section5} below. It is convenient to utilize them here though to make the computation of the covariance more transparent.

\begin{lemma}
\label{43}
With the above notations we have
\begin{equation*}
\frac{p_{k,I}^{\#} -  \eta \gamma^k L^{k+1}}{L^k} \xrightarrow[L \to \infty]{} \xi_{k,I}, \qquad
\frac{p_{k,I} - m_{k,\eta} L^{k+1} }{L^k} \xrightarrow[L \to \infty]{} \zeta_{k,I},
\end{equation*}
where $\xi_{k,I}$ and $\zeta_{k,I}$ are Gaussian random variables with zero mean, $m_{k, \eta}$ is a positive constant, and the convergence is understood as convergence in the sense of states (see Section \ref{algebra}).
\footnote{Here the convergence in the sense of states is equivalent to the convergence in moments if we consider $p_{k,I}^{\#}$ and $p_{k,I}$ as random variables on the probability space $Sign(I)$ equipped with the measure $P^{\gamma}_L$, see Section \ref{probMeasure}.}
\end{lemma}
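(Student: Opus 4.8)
The plan is to prove joint asymptotic normality of the normalized centered variables by the method of moments, using the footnote's observation that for a fixed $I$ convergence in the sense of states coincides with convergence of the moments of the genuine random variables $p_{k,I}^{\#}$ and $p_{k,I}$ on $(Sign(I), P_L^{\gamma})$. By Wick's formula it then suffices to establish two facts about the family $\{(p_{k,I}^{\#} - \langle p_{k,I}^{\#}\rangle)/L^{k}\}_k$: that the rescaled covariances converge, which is exactly Lemma \ref{41}, and that all joint cumulants of order $\ge 3$ tend to $0$. I would first treat the sharp power sums $p_{k,I}^{\#}$ and then transfer the conclusion to the $p_{k,I}$ via \eqref{change}.

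For the cumulant estimate I would compute the joint moments $\langle p_{k_1,I}^{\#}\cdots p_{k_m,I}^{\#}\rangle$ directly from the differential-operator realization \eqref{dif-vid}: the state of a product is the composition $D(p_{k_1,I}^{\#})\cdots D(p_{k_m,I}^{\#})$ applied to the explicit Plancherel character \eqref{char-formul} and evaluated at $x_{ij}=\delta_{ij}$. Expanding by the Leibniz rule yields a sum over contraction diagrams, in which each derivative $\partial_{\alpha\beta}$ either differentiates the exponential, producing a factor $\gamma L$ and forcing $\alpha=\beta$, or hits a variable $x_{\mu\nu}$ of some factor, producing a Kronecker delta; the uncontracted $x$'s are set to $\delta$'s at the end, pinning their ambient index to $I$. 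Each factor $p_{k_r,I}^{\#}=\tr(E_I^{k_r})$ contributes a cyclically arranged block of $k_r$ creation and $k_r$ annihilation symbols with internal indices in $I$, so that a free index summed over $I$ contributes $|I|\sim\eta L$ and a derivative landing on the exponential contributes one further power of $L$.

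The heart of the argument, and the step I expect to be the main obstacle, is the power counting. I claim that the fully connected contribution coming from $m$ blocks, i.e. the $m$-th joint cumulant, is of order $L^{K-m+2}$ with $K=\sum_r k_r$; this matches the known orders $L^{k+1}$ for the mean ($m=1$, cf. \eqref{matojidanie}) and $L^{k+l}$ for the covariance ($m=2$, Lemma \ref{41}), and is therefore $o(L^{K})$ as soon as $m\ge 3$. Concretely I would show that each additional block joined into a connected diagram costs powers of $L$ relative to the disconnected product of means, by tracking how the index identifications imposed by the contractions reduce the number of independent index sums; the disconnected part $\prod_r\langle p_{k_r,I}^{\#}\rangle$ saturates the weight bound \eqref{w-bound} of order $L^{K+m}$, and the connected correction drops by $2(m-1)$ powers. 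Dividing by $L^{K}$ then annihilates all cumulants of order $\ge 3$ and leaves a jointly Gaussian family $\{\xi_{k,I}\}$ whose covariance is the limit in \eqref{lem1}, giving the first convergence.

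To pass from $p_{k,I}^{\#}$ to $p_{k,I}$ I would invoke \eqref{change}, which expresses $p_{k,I}$ as a linear combination of products $p_{j_1,I}^{\#}\cdots p_{j_t,I}^{\#}$ of total weight $k+1$, plus terms of weight $\le k$. The weight-$\le k$ terms contribute $O(L^{k})$ to the mean and fluctuations of strictly smaller order by \eqref{w-bound}, so they do not affect the limit. For each weight-$(k+1)$ product the centering-and-normalizing operation linearizes at leading order: writing $\hat p_{j}=p_{j,I}^{\#}-\langle p_{j,I}^{\#}\rangle$, the centered product equals $\sum_{i}\bigl(\prod_{i'\ne i}\langle p_{j_{i'},I}^{\#}\rangle\bigr)\hat p_{j_i}$ up to terms of order $L^{k-1}$, a linear combination of the already-Gaussian $\xi_{j,I}$. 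Hence $(p_{k,I}-\langle p_{k,I}\rangle)/L^{k}$ converges to a Gaussian $\zeta_{k,I}$ jointly with the $\xi$'s, and the deterministic constant is $m_{k,\eta}=\lim_{L\to\infty}\langle p_{k,I}\rangle/L^{k+1}$, which is positive because it is the corresponding moment of the nondegenerate limit shape.
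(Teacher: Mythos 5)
Your plan follows essentially the same route as the paper: there Lemma \ref{43} is deduced as a special case of Proposition \ref{51main}, whose proof (Proposition \ref{52}) is exactly the moment method you describe --- expand the operators \eqref{dif-vid} against the character \eqref{char-formul}, organize the Leibniz contractions into diagrams, and show by power counting that only pairings survive at order $L^{K}$. Your cumulant formulation and the paper's Wick-factorization of moments are equivalent, and your transfer from $p_{k,I}^{\#}$ to $p_{k,I}$ via \eqref{change} with linearization of the centered products is how the paper argues as well (Lemma \ref{44} and Section \ref{proof44}); the paper computes $m_{k,\eta}$ explicitly rather than identifying it with a moment of the limit shape.

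The genuine gap is that the power-counting claim you flag as ``the main obstacle'' is the entire content of the paper's Section \ref{section5}, and the mechanism you propose for it --- ``index identifications imposed by the contractions reduce the number of independent index sums'' --- does not suffice on its own. Each $\nu_j$ splits as in \eqref{diag} into a diagonal part $\sum_{i\in I}\bigl(x_{ii}^{k_j}\partial_{ii}^{k_j}-(\gamma L)^{k_j}\bigr)$ and an off-diagonal part, and for three diagonal blocks sharing a common index the identification of indices only reduces $|I|^{3}$ to $|I|$, leaving a naive bound $L^{K+1}$, which does \emph{not} vanish after division by $L^{K}$. What actually kills these terms is a cancellation against the subtracted means: commuting the $x_{ii}$'s left and the $\partial_{ii}$'s right via \eqref{commut}, the contraction-free term cancels exactly against $-(\gamma L)^{k_j}$, and for three or more centered diagonal factors the single-contraction terms cancel as well, so one gains the two missing powers of $L$ (this is the paper's Lemma \ref{lemma51}, and the same trick reappears when diagonal and off-diagonal supports overlap). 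For the off-diagonal part, the inequality $cap(M)\le deg(M)-cov(M)$ with equality only for regular gluings of an $x$-cycle with a $\partial$-cycle requires the $\partial$-regularity induction of Lemmas \ref{lemma52}--\ref{lemma53}, not just counting free indices. Your exponent $L^{K-m+2}$ is consistent with what the paper proves (and more than is needed --- $o(L^{K})$ for $m\ge 3$ suffices, and the paper only establishes $O(L^{K-1})$ for the non-pairing terms), but establishing any such bound is the theorem itself rather than a bookkeeping step; as written the proposal would need all of Section \ref{section5} to be complete.
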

\begin{proof}

The state of $p_{k,I}^{\#}$ is given by \eqref{matojidanie}. The existence of the limit
\begin{equation*}
m_{k, \eta} := \lim_{L \to \infty} \frac{\langle p_{k,I} \rangle}{L^{k+1}}
\end{equation*}
follows from the decomposition of $p_{k,I}$ into a linear combination of $p_{k,I}^{\#}$'s.
Hence the statement of the lemma is a particular case of Proposition \ref{51main} which is proved in Section \ref{section5} below.
\end{proof}

Recall that by $[t^k]\{A(t)\}$ we denote the coefficient of $t^k$ in a formal power series $A(t)$.

Taking into account \eqref{change}, it is easy to see that
\begin{multline*}
m_{k, \eta} = \frac{1}{k+1} [ u^{k+1}] \left\{ \left( 1 + \eta \gamma u^2 + \eta \gamma^2 u^3 + \dots \right)^{k+1} \right\} \\ = \frac{1}{k+1} [ u^{k+1}] \left\{ \left( 1+ \frac{\gamma \eta u^2}{1- \gamma u} \right)^{k+1} \right\}.
\end{multline*}
After computations we obtain
\begin{equation*}
m_{k,\eta} = \frac{1}{k+1} \sum_{r=0}^{k+1} \gamma^{k-r+1} \eta^r \binom{k+1}{r} \binom{k-r}{r-1}.
\end{equation*}
In the case $\eta=1$ this expression coincides with the expression given in
\cite[Prop. 5]{Meliot}.

We do not define $\xi_{k,I_1}$, $\xi_{l,I_2}$ on a common probability space. However, we shall use the notation $\mathbf E( \xi_{k,I_1} \xi_{l,I_2})$ defined by
\begin{equation*}
\mathbf E( \xi_{k,I_1} \xi_{l,I_2}) := \lim_{L \to \infty} \left\langle \frac{ p_{k,I_1}^{\#} - \eta_1 \gamma^k  L^{k+1}}{L^k} \cdot \frac{ p_{l,I_2}^{\#} - \eta_2 \gamma^l L^{l+1}}{L^l} \right\rangle.
\end{equation*}

The existence of the limit in this expression follows from Proposition \ref{51main}. Similarly, denote
\begin{equation*}
\mathbf E( \zeta_{k,I_1} \zeta_{l,I_2}) := \lim_{L \to \infty} \left\langle \frac{ p_{k,I_1} - m_{k, \eta_1} L^{k+1}}{L^k} \cdot \frac{ p_{l,I_2} - m_{l,\eta_2} L^{l+1} }{L^l} \right\rangle.
\end{equation*}

\begin{lemma}
\label{44}
In the notations of the beginning of the section, with formal variables $u$ and $v$, we have
\begin{multline*}
\mathbf E ( \zeta_{k,I_1} \zeta_{l,I_2}) = [u^{k+1}] [v^{l+1}] \left\{ \mathbf E \left( \left( \sum_{i=1}^{\infty} \xi_{i,I_1} u^i \right)
\left( \sum_{j=1}^{\infty} \xi_{j,I_2} v^j \right) \right) \right. \\ \times \left. \left( 1+ \eta_1 u \sum_{i=1}^{\infty} \left( \gamma u \right)^i \right)^k \left( 1+ \eta_2 v
\sum_{j=1}^{\infty} (\gamma v)^j \right)^l \right\} .
\end{multline*}
\end{lemma}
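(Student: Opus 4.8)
The plan is to transport the change-of-basis relation \eqref{change} through the limit, keeping only the part that is linear in the fluctuations. For $I\in\{I_1,I_2\}$ I would set
\begin{equation*}
P_I(u)=1+\sum_{j\ge1}p_{j,I}^{\#}u^{j+1},\qquad \bar P_I(u)=1+\sum_{j\ge1}\langle p_{j,I}^{\#}\rangle\,u^{j+1},
\end{equation*}
and write $\Phi_I(u)=P_I(u)-\bar P_I(u)$ for the fluctuation series, so that by \eqref{change}
\begin{equation*}
p_{k,I}=\frac{1}{k+1}[u^{k+1}]\bigl\{P_I(u)^{k+1}\bigr\}+(\text{weight}\le k).
\end{equation*}
Expanding $P_I^{k+1}=\bar P_I^{k+1}+(k+1)\bar P_I^{k}\Phi_I+\binom{k+1}{2}\bar P_I^{k-1}\Phi_I^{2}+\cdots$ splits off the deterministic part $\frac1{k+1}[u^{k+1}]\{\bar P_I^{k+1}\}$, which is exactly the centering $m_{k,\eta}L^{k+1}$ (cf. the computation of $m_{k,\eta}$ after Lemma~\ref{43}). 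The first thing I would prove is that, after division by $L^{k}$ and passage to the limit in the sense of states, neither the lower-weight remainder nor the terms carrying $\Phi_I^{m}$ with $m\ge2$ contribute. Since by \eqref{matojidanie} a mean $\langle p_{j,I}^{\#}\rangle\sim L^{\,j+1}$ while the corresponding centered quantity is of order $L^{\,j}$ (Proposition~\ref{51main}), each replacement of a mean by a fluctuation costs one power of $L$; hence the $\Phi_I^{m}$ term is $O(L^{\,k+1-m})$ and the lower-weight remainder is $O(L^{\,k-1})$, both $o(L^{k})$.

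Next I would extract the scaling by rescaling $u\mapsto u/L$ and using $[u^{k+1}]\{F(u)\}=L^{k+1}[u^{k+1}]\{F(u/L)\}$. From \eqref{matojidanie} and $|I|/L\to\eta$ one finds
\begin{equation*}
\bar P_I(u/L)\xrightarrow[L\to\infty]{}1+\eta u\sum_{i\ge1}(\gamma u)^{i}=:Q_\eta(u),\qquad
\Phi_I(u/L)=\frac{u}{L}\sum_{j\ge1}\frac{p_{j,I}^{\#}-\langle p_{j,I}^{\#}\rangle}{L^{\,j}}\,u^{\,j},
\end{equation*}
and by Lemma~\ref{43} the last sum tends, in the sense of states, to $\Xi_I(u):=\sum_{j\ge1}\xi_{j,I}u^{j}$. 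Collecting the powers of $L$ identifies the leading fluctuation of $p_{k,I}$ at scale $L^{k}$, giving, in the sense of states,
\begin{equation*}
\zeta_{k,I}=[u^{k+1}]\bigl\{u\,Q_\eta(u)^{k}\,\Xi_I(u)\bigr\},
\end{equation*}
which realizes $\zeta_{k,I}$ as a fixed linear combination of the Gaussian variables $\{\xi_{j,I}\}_{j\ge1}$.

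Finally I would multiply the two representations and apply the state. The weights $u\,Q_{\eta_1}(u)^{k}$ and $v\,Q_{\eta_2}(v)^{l}$ are deterministic, so by linearity the expectation falls on the pairings $\xi_{i,I_1}\xi_{j,I_2}$, whose joint covariances are given by Lemma~\ref{41} (and exist by Proposition~\ref{51main}); the noncommutativity of $p_{k,I_1}$ and $p_{l,I_2}$ is harmless here, as it has already been absorbed into the cross-covariances $\mathbf E(\xi_{i,I_1}\xi_{j,I_2})$ via their dependence on $c=\lim|I_1\cap I_2|/L$. This yields
\begin{equation*}
\mathbf E(\zeta_{k,I_1}\zeta_{l,I_2})=[u^{k+1}][v^{l+1}]\Bigl\{uv\,Q_{\eta_1}(u)^{k}Q_{\eta_2}(v)^{l}\,\mathbf E\bigl(\Xi_{I_1}(u)\Xi_{I_2}(v)\bigr)\Bigr\},
\end{equation*}
which is the asserted identity, the overall prefactor $uv$ reflecting the exponent $j+1$ appearing in \eqref{change}. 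I expect the main obstacle to be the first paragraph: justifying uniformly in $L$, and in the sense of states, that the higher fluctuation powers $\Phi_I^{m}$ ($m\ge2$) and the lower-weight remainder are negligible after normalization, so that the linearization is legitimate for the limiting covariance. This is precisely where the weight bound \eqref{w-bound} and the asymptotic analysis of Section~\ref{section5} are needed.
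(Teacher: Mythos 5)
Your proof is correct and follows essentially the same route as the paper: it is precisely the linearization sketched in the paper's own ``Idea of proof'' (substitute \eqref{change}, observe that only the terms linear in the fluctuations of the $p_{j,I}^{\#}$ survive at order $L^{k+l}$), with the negligibility of the quadratic and higher fluctuation terms and of the lower-weight remainder justified by the moment bounds of Proposition \ref{51main}, which is also what the paper's formal argument in Section \ref{proof44} relies on (there organized through the basis $p_{\rho,I}^{\#}$ and Lemma \ref{lemma56}). One small remark: your final formula carries an extra prefactor $uv$ (equivalently, $\xi_{i,I}$ is paired with $u^{i+1}$ rather than $u^{i}$), which is in fact the normalization consistent with \eqref{change} and with the way Lemma \ref{44} is used in the proof of Lemma \ref{45}, so this discrepancy with the printed statement is not a gap in your argument.
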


\begin{proof}[Idea of proof]
Recall that the transition formula between generators $\{p_k^{\#} \}$ and $\{p_k \}$ has the following form (see \eqref{change}):
\begin{equation}
\label{change2}
\mathbf p_k = \frac{1}{k+1} [ u^{k+1}] \left\{ (1 + \mathbf p_1^{\#} u^2 + \mathbf p_2^{\#} u^3 + \dots )^{k+1} \right\} + \text{lower weight terms}.
\end{equation}

Informally speaking, Lemma \ref{43} asserts that
\begin{equation*}
p_{k,I}^{\#} = \eta \gamma^{k} L^{k+1} + \xi_{k,I} L^{k} + o(L^k).
\end{equation*}

Consider the expression
$
\langle ( p_{k,I_1} - \langle p_{k,I_1} \rangle ) ( p_{l,I_2} - \langle p_{l,I_2} \rangle ) \rangle.
$
Let us substitute $\eqref{change2}$ into this expression. We obtain that the contributions of order $L^{k+l+2}$ and $L^{k+l+1}$ disappear. The contribution of order $L^{k+l}$ arises iff we choose the component of order $L^r$ from one of the factors corresponding to $p_r^{\#}$, and choose components of the maximal order from the other factors. The statement of the lemma follows from this fact.

A formal proof is given in Section \ref{proof44}.
\end{proof}

From now on we assume that $\eta_1 \le \eta_2$.

Recall that the function $x(z)$ was defined in Section \ref{resultsBorFer}. Let $a$ and $b$ be formal variables.

\begin{lemma}
\label{45}
In the notations of the beginning of the section we have
\begin{equation}
\label{lem4}
\sum_{k,l \ge 1}^{\infty} \frac{ \mathbf E ( \zeta_{k,I_1} \zeta_{l,I_2} ) } {a^{k+1} b^{l+1} } =
\frac{1}{ (2 \pi i)^2 } \oint_{ |z|^2=\frac{\eta_1}{\gamma} } \oint_{ |w|^2 = \frac{\eta_2}{\gamma} } \frac{1}{ a - x(z) } \frac{1}{ b - x(w)}
\frac{c/ \eta_1}{ (c z/ \eta_1 - w)^2 } dz dw,
\end{equation}
where $\dfrac{1}{ a - x(z) }$ and $\dfrac{1}{ b - x(w)}$ are understood as formal power series in $a^{-1}$ and $b^{-1}$, respectively. For $\eta_1 = \eta_2$ we assume that the integration contour in $w$ has the form $|w|^2 = \dfrac{\eta_2}{\gamma} + \delta$, $0< \delta \ll 1$, and the expression in the right-hand side of \eqref{lem4} is understood as the limit as $\delta \to 0$.
\end{lemma}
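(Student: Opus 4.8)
The plan is to reduce the generating-function identity to the single-coefficient statement
\[
\mathbf E(\zeta_{k,I_1}\zeta_{l,I_2}) = \frac{1}{(2\pi i)^2}\oint_{|z|^2=\eta_1/\gamma}\oint_{|w|^2=\eta_2/\gamma} x(z)^k x(w)^l\,\frac{c/\eta_1}{(cz/\eta_1-w)^2}\,dz\,dw,\qquad k,l\ge 1,
\]
and to recover Lemma \ref{45}'s assertion by expanding $\tfrac1{a-x(z)}=\sum_{k\ge0}x(z)^k a^{-(k+1)}$ and summing: the $k=0$ and $l=0$ terms drop out because the kernel $\tfrac{c/\eta_1}{(cz/\eta_1-w)^2}$ has its only $z$-pole at $z=\eta_1 w/c$, which lies outside $|z|^2=\eta_1/\gamma$ once $c\le\sqrt{\eta_1\eta_2}$. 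To prove the displayed coefficient identity I would start from Lemma \ref{44}, which expresses $\mathbf E(\zeta_{k,I_1}\zeta_{l,I_2})$ as a coefficient of $F(u,v)\,G_1(u)^kG_2(v)^l$, with $F(u,v)=\tfrac{c\gamma uv}{((1-\gamma u)(1-\gamma v)-c\gamma uv)^2}$ the $\xi$-covariance generating function of Lemma \ref{42} and $G_1(u)=1+\tfrac{\eta_1\gamma u^2}{1-\gamma u}$, $G_2(v)=1+\tfrac{\eta_2\gamma v^2}{1-\gamma v}$. Written as a contour integral around $u=v=0$, its integrand is $\tfrac{F(u,v)}{uv}\,(G_1(u)/u)^k(G_2(v)/v)^l$.

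The key step is the rational change of variables $z=\dfrac{-\eta_1 u}{1-\gamma u}$ and $w=\dfrac{-\eta_2 v}{1-\gamma v}$, i.e. $u=\dfrac{z}{\gamma z-\eta_1}$. A direct check gives $G_1(u)/u=\tfrac1u+\tfrac{\eta_1\gamma u}{1-\gamma u}=\gamma-\gamma z-\eta_1/z=x(z)$, the boundary value of $x$ on $|z|^2=\eta_1/\gamma$, and likewise $G_2(v)/v=x(w)$, so the factor $(G_1(u)/u)^k(G_2(v)/v)^l$ becomes exactly $x(z)^kx(w)^l$. Using $1-\gamma u=\tfrac{\eta_1}{\eta_1-\gamma z}$, $dz=\tfrac{-\eta_1}{(1-\gamma u)^2}\,du$, and the identity $(1-\gamma u)(1-\gamma v)-c\gamma uv=\tfrac{\eta_1\eta_2-c\gamma zw}{(\eta_1-\gamma z)(\eta_2-\gamma w)}$, the measure transforms cleanly into $\tfrac{F(u,v)}{uv}\,du\,dv=\tfrac{c\gamma\,\eta_1\eta_2}{(\eta_1\eta_2-c\gamma zw)^2}\,dz\,dw$. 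The small circles around $u=v=0$ map to small circles around $z=w=0$, and I can then enlarge them to $|z|^2=\eta_1/\gamma$ and $|w|^2=\eta_2/\gamma$ without crossing a pole: the singularity of the transformed kernel sits at $|zw|=\eta_1\eta_2/(c\gamma)$, which is never reached since $|zw|\le\sqrt{\eta_1\eta_2}/\gamma\le\eta_1\eta_2/(c\gamma)$ along the enlargement, while the poles coming from $x(z)^k,x(w)^l$ at the origin stay enclosed throughout.

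The main obstacle is the final reconciliation of the kernel $\widetilde K(z,w)=\tfrac{c\gamma\eta_1\eta_2}{(\eta_1\eta_2-c\gamma zw)^2}$ produced by the change of variables with the asymmetric kernel $\tfrac{c/\eta_1}{(cz/\eta_1-w)^2}$ appearing in the statement. These are genuinely different rational functions, yet their double contour integrals against $x(z)^kx(w)^l$ over the two circles agree; the mechanism is that the stated kernel contributes an extra residue at $w=cz/\eta_1$, whose $z$-dependent part cancels, after the outer $z$-integration, against the difference of the residues at the origin (one checks directly, e.g., that both kernels give the common value $c\gamma$ for $k=l=1$ and $2c\gamma^2$ for $(k,l)=(2,1)$). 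Carrying out this residue bookkeeping in general is where the real work lies, and it is precisely here that the hypothesis $\eta_1\le\eta_2$ enters: it places $w=cz/\eta_1$ inside the larger circle and $z=\eta_1 w/c$ outside the smaller one, fixing which residues are collected. The borderline configuration $\eta_1=\eta_2$, in which the relevant pole may land on the contour, is dealt with exactly by the prescribed $\delta$-shift of the $w$-contour followed by the limit $\delta\to0$.
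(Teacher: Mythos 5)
Your overall strategy --- rewrite the coefficient formula of Lemma \ref{44} (combined with Lemmas \ref{41} and \ref{42}) as a double contour integral around the origin, change variables so that $G_1(u)/u$ and $G_2(v)/v$ become $x(z)$ and $x(w)$, and then deform the contours to $|z|^2=\eta_1/\gamma$ and $|w|^2=\eta_2/\gamma$ --- is the paper's strategy, and the computations you actually carry out are correct: with the symmetric substitutions $z=-\eta_1 u/(1-\gamma u)$, $w=-\eta_2 v/(1-\gamma v)$ the kernel does become $c\gamma\eta_1\eta_2/(\eta_1\eta_2-c\gamma zw)^2$, and your pole-avoidance estimates for the deformation are right. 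But the lemma asserts the identity with the kernel $\dfrac{c/\eta_1}{(cz/\eta_1-w)^2}$, and you stop exactly at the point of reconciling your kernel with that one, saying that carrying out the residue bookkeeping "is where the real work lies." Verifying two low-order coefficients is not a proof of the general identity, so as written the argument has a genuine gap, and the gap sits at precisely the step the lemma is about.

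The gap can be closed in two ways. The paper's route is to use an \emph{asymmetric} pair of substitutions: $z=-\eta_1u/(1-\gamma u)$ as you do, but $w=-(1-\gamma v)/(\gamma v)$, i.e.\ $v=1/(\gamma(1-w))$. One checks that the second denominator still becomes $b-x(w)$ with $x(w)=-\eta_2/w-\gamma(w-1)$ on $|w|^2=\eta_2/\gamma$, while the kernel comes out directly as $\dfrac{c/\eta_1}{(cz/\eta_1-w)^2}$, so no reconciliation is needed. Alternatively, you can finish your version by noting that the two kernels are exchanged by the reflection $w\mapsto \eta_2/(\gamma w)=\bar w$ of the circle $|w|^2=\eta_2/\gamma$: there one has $\eta_1\eta_2-c\gamma zw=\gamma w\,(\eta_1\bar w-cz)$ and $dw=-\tfrac{\gamma w^2}{\eta_2}\,d\bar w$, hence
\[
\frac{c\gamma\eta_1\eta_2}{(\eta_1\eta_2-c\gamma zw)^2}\,dw
=-\frac{c/\eta_1}{(c z/\eta_1-\bar w)^2}\,d\bar w,
\]
while $x(w)=\gamma(1-2\mathfrak R(w))$ is invariant under $w\mapsto\bar w$; since conjugation reverses the orientation of the circle, the minus sign is absorbed and the two integrals coincide for every $k,l$. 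Either of these arguments must actually be carried out for the proof to be complete.
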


\begin{proof}
Let us assume now that $u$, $v$ are complex variables, let the contour $\Gamma_u$ be given by
\begin{equation*}
u = \frac{1}{\gamma + r \exp(i \phi)}\, , \qquad \phi \in [0,2 \pi],
\end{equation*}
where $r$ is an arbitrary positive number satisfying $r > 2 \gamma$, and the contour $\Gamma_v$ is a positively oriented circle with center 0 and radius $\epsilon \ll 1$. The contours are chosen in such a way that they encircle the origin (in the counter-clockwise direction), and the formal power series in $u$ and $v$ used below converge inside the contours. The lack of symmetry between the contours $\Gamma_u$ and $\Gamma_v$ is due to the inequality $\eta_1 \le \eta_2$.

Lemma \ref{44} can now be restated as
\begin{multline}
\label{int1}
\sum_{k,l \ge 1} \frac{ \mathbf E (\zeta_{k,I_1} \zeta_{l,I_2} )} {a^{k+1} b^{l+1} } =
\frac{1}{ (2 \pi i)^2} \oint_{u \in \Gamma_u} \oint_{v \in \Gamma_v} \\
\frac{\mathbf E (\sum_{m,n \ge 1} \xi_{m,I_1} \xi_{n,I_2} u^m v^n) }
{(ua - (1+ \eta_1 (\gamma u^2 + \gamma^2 u^3 + \dots))) (vb - (1+ \eta_2 (\gamma v^2 + \gamma^2 v^3 + \dots )} du dv \\ =
\frac{1}{(2 \pi i)^2} \oint_{u \in \Gamma_u} \oint_{v \in \Gamma_v} \frac{1}{(ua - 1 - \frac{ \eta_1 \gamma u^2} {(1-\gamma u)} )}
\frac{1}{(vb - 1 - \frac{ \eta_2 \gamma v^2 }{(1-\gamma v)} )} \\
\times \frac{ c \gamma u v} {((1-\gamma u) (1 - \gamma v) -c \gamma uv)^2} du dv,
\end{multline}
where in the last equality we used Lemmas \ref{41} and \ref{42}). Let us apply a change of variables:

\begin{equation*}
z= - \frac{\eta_1 u}{(1-\gamma u)}, \qquad w = - \frac{1-\gamma v}{\gamma v}.
\end{equation*}

After this change of variables, contours $\Gamma_u$ and $\Gamma_v$ turn into contours $\Gamma_z$
and $\Gamma_w$ which also encircle the origin, and, in addition, $\Gamma_z$ is inside $\Gamma_w$.
Then \eqref{int1} takes the form
\begin{multline*}
\sum_{k,l \ge 1} \frac{\mathbf E (\zeta_{k,I_1} \zeta_{l,I_2})} {a^{k+1} b^{l+1}} = \frac{1}{(2 \pi i)^2}
\oint_{\Gamma_z} \oint_{\Gamma_w} \frac{1}{a+(\eta_1 / z + \gamma(-1+z))}\, \frac{1}{b+(\eta_2 / w +\gamma(-1+w))}
\\ \times \frac{c/\eta_1} {(cz/\eta_1 -w)^2} dz dw.
\end{multline*}

Recall that the expression in the right-hand side is considered as a formal power series in $a^{-1}$ and $b^{-1}$. Thus, the poles of the integrand can be at $z=0$, $w=0$, and at the roots of ${cz}/{\eta_1} -w =0$.
Let us deform $\Gamma_z$ to the circle $|z|^2 = \dfrac{\eta_1}{\gamma}$, and deform $\Gamma_w$ to the circle $|w|^2 = \dfrac{\eta_2}{ \gamma}$ (in the case $\eta_1 = \eta_2$ we deform $\Gamma_w$ to the circle $|w|^2 = \dfrac{\eta_2}{\gamma} + \delta$, where $0< \delta \ll 1$). This deformation does not pass through the poles of the integrand due to conditions $\dfrac{c}{\eta_1} \le 1$ and $\eta_1 \le \eta_2$. Recall that

\begin{equation*}
x(z) = \frac{-\eta_1}{z} - \gamma (z-1), \qquad \mbox{ for $|z|^2  = \dfrac{\eta_1}{\gamma}$},
\end{equation*}

\begin{equation*}
x(w) = \frac{-\eta_2}{w} - \gamma (w-1), \qquad \mbox{ for $|w|^2  = \dfrac{\eta_2}{\gamma}$}.
\end{equation*}

Therefore,
\begin{equation*}
\sum_{k,l \ge 1} \frac{ \mathbf E (\zeta_{k,I_1} \zeta_{l,I_2}) } {a^{k+1} b^{l+1} } =
\frac{1}{(2 \pi i)^2} \oint_{|z|^2=\frac{\eta_1}{\gamma} } \oint_{|w|^2 = \frac{\eta_2}{\gamma}}
\frac{1}{(a-x(z))(b-x(w))} \frac{c/\eta_1 } {(c z / \eta_1 -w)^2} dz dw.
\end{equation*}

In the case $\eta_1 = \eta_2$ we obtain this equation for the $w$-contour of the form $|w|^2 = \dfrac{\eta_2}{\gamma} + \delta$ and take the limit as $\delta \to 0$. The existence of this limit will be clear from the proof of the next proposition.

\end{proof}

\begin{proposition}
\label{prop41}
In the notations of the beginning of the section we have
\begin{multline}
\label{lem5}
\lim_{L \to \infty} \left\langle \frac{ p_{k,I_1} - \langle p_{k,I_1} \rangle}{L^k} \cdot \frac{ p_{l,I_2} - \langle p_{l,I_2} \rangle }{L^l} \right\rangle = \frac{ k l}{\pi} \oint_{|z|^2 = \frac{\eta_1}{\gamma};
\mathfrak I (z)>0}
\oint_{|w|^2 = \frac{\eta_2}{\gamma} ; \mathfrak I (w)>0} \\ \times
(x(z))^{k-1} (x(w))^{l-1}
\frac{1}{2 \pi} \ln \left|\frac{c/ \gamma - z w}{c/ \gamma - z \bar w} \right| \frac{d(x(z))}{dz} \frac{d(x(w))}{dw} dz dw.
\end{multline}
\end{proposition}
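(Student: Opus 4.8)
The plan is to read off the individual covariances $\mathbf E(\zeta_{k,I_1}\zeta_{l,I_2})$ from the generating function produced in Lemma \ref{45}, and then to massage the resulting contour integral into the claimed form by one integration by parts followed by a folding of the full circles onto their upper halves. First I note that the left-hand side of \eqref{lem5} coincides with $\mathbf E(\zeta_{k,I_1}\zeta_{l,I_2})$ as defined before Lemma \ref{44}, since by Lemma \ref{43} recentering by $\langle p_{k,I_1}\rangle$ instead of $m_{k,\eta_1}L^{k+1}$ alters the expression only by a deterministic term that vanishes after division by $L^{k+l}$. To extract the coefficient I expand $\frac{1}{a-x(z)}=\sum_{m\ge 0}x(z)^m a^{-(m+1)}$ and likewise in $b$, so that taking the coefficient of $a^{-(k+1)}b^{-(l+1)}$ on both sides of \eqref{lem4} gives
\begin{equation*}
\mathbf E(\zeta_{k,I_1}\zeta_{l,I_2})=\frac{1}{(2\pi i)^2}\oint_{|z|^2=\frac{\eta_1}{\gamma}}\oint_{|w|^2=\frac{\eta_2}{\gamma}} x(z)^k x(w)^l\,\frac{c/\eta_1}{(cz/\eta_1-w)^2}\,dz\,dw,
\end{equation*}
the integrals being over the full circles.

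Next I integrate by parts. On these circles $x(z)=-\eta_1/z-\gamma(z-1)$ and $x(w)=-\eta_2/w-\gamma(w-1)$ are the holomorphic representatives used in Lemma \ref{45}, and one checks directly that $\frac{c/\eta_1}{(cz/\eta_1-w)^2}=\partial_z\partial_w\ln(cz/\eta_1-w)$. Since both contours are closed, integrating by parts once in $z$ and once in $w$ moves the two derivatives onto $x(z)^k x(w)^l$ with no boundary terms, yielding
\begin{equation*}
\mathbf E(\zeta_{k,I_1}\zeta_{l,I_2})=\frac{kl}{(2\pi i)^2}\oint\oint x(z)^{k-1}x(w)^{l-1}\ln(cz/\eta_1-w)\,dx(z)\,dx(w).
\end{equation*}

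The main work — and the step I expect to be the real obstacle — is to fold each full circle onto its upper half $\{\mathfrak I>0\}$ so that the meromorphic logarithm is replaced by the real kernel $\frac{1}{2\pi}\ln\bigl|\frac{c/\gamma-zw}{c/\gamma-z\bar w}\bigr|$. The key structural facts are: (i) $x(z)$ is \emph{real} on $|z|^2=\eta_1/\gamma$, because $\bar z=\eta_1/(\gamma z)$ there gives $\overline{x(z)}=x(z)$, and likewise for $x(w)$; and (ii) the reflections $T(z)=\eta_1/(\gamma z)$ and $S(w)=\eta_2/(\gamma w)$ are holomorphic involutions of the respective circles that coincide with complex conjugation, hence interchange the upper and lower semicircles while leaving the one-form $x(z)^{k-1}dx(z)$ invariant. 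Splitting each circle into its two halves and pushing the lower halves up by $T$ and $S$, the orientation reversal turns the single integral over $|z|^2=\eta_1/\gamma$ into a \emph{difference} $\ln(cz/\eta_1-w)-\ln\bigl(cT(z)/\eta_1-w\bigr)$ over the upper half, and similarly in $w$. Carrying out both foldings produces the signed combination
\begin{equation*}
\ln(cz/\eta_1-w)-\ln\bigl(\tfrac{c}{\gamma z}-w\bigr)-\ln\bigl(\tfrac{cz}{\eta_1}-\tfrac{\eta_2}{\gamma w}\bigr)+\ln\bigl(\tfrac{c}{\gamma z}-\tfrac{\eta_2}{\gamma w}\bigr),
\end{equation*}
evaluated on the two upper semicircles.

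It then remains to simplify this combination. Using $\gamma z\bar z=\eta_1$ and $\gamma w\bar w=\eta_2$ one rewrites the four logarithms, up to the spurious factors $\ln(\gamma z/\eta_1)$ and $-\ln z$ (which are independent of the other variable and cancel in the signed sum), as $\ln(c/\gamma-\bar z w)$, $\ln(c/\gamma-zw)$, $\ln(c/\gamma-\bar z\bar w)$ and $\ln(c/\gamma-z\bar w)$ respectively. Since $c/\gamma$ is real, $\ln(c/\gamma-zw)+\ln(c/\gamma-\bar z\bar w)=\ln|c/\gamma-zw|^2$ and $\ln(c/\gamma-z\bar w)+\ln(c/\gamma-\bar z w)=\ln|c/\gamma-z\bar w|^2$, so the signed combination collapses to $-2\ln\bigl|\frac{c/\gamma-zw}{c/\gamma-z\bar w}\bigr|$. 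Feeding this back, the prefactor becomes $\frac{kl}{(2\pi i)^2}\cdot(-2)=\frac{kl}{2\pi^2}=\frac{kl}{\pi}\cdot\frac{1}{2\pi}$, exactly the constant in \eqref{lem5}, and restoring $dx(z)=\frac{dx(z)}{dz}dz$ and $dx(w)=\frac{dx(w)}{dw}dw$ gives the stated formula. The geometric hypotheses $c/\eta_1\le 1$ and $\eta_1\le\eta_2$ (already used in Lemma \ref{45}) guarantee that all contour deformations and foldings avoid the zeros of $c/\gamma-zw$ and $c/\gamma-z\bar w$; in the boundary case $\eta_1=\eta_2$ one works with the circle $|w|^2=\eta_2/\gamma+\delta$ and lets $\delta\to0$, the limit being finite because the $\ln|\cdot|$ singularity is integrable.
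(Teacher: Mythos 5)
Your proposal is correct and follows essentially the same route as the paper's proof: extract $\mathbf E(\zeta_{k,I_1}\zeta_{l,I_2})$ from the generating function of Lemma \ref{45}, relate $\frac{c/\eta_1}{(cz/\eta_1-w)^2}$ to $\ln(cz/\eta_1-w)$ by integration by parts, and identify the signed four-logarithm combination on the circles $|z|^2=\eta_1/\gamma$, $|w|^2=\eta_2/\gamma$ with $-2\ln\bigl|\frac{c/\gamma-zw}{c/\gamma-z\bar w}\bigr|$ via $\bar z=\eta_1/(\gamma z)$, $\bar w=\eta_2/(\gamma w)$. The only difference is that you traverse the chain of equalities in the opposite direction and make explicit (via the holomorphic reflections $T$, $S$ and the invariance of $x(z)^{k-1}\,dx(z)$) the folding of the full circles onto their upper halves, a step the paper states without elaboration.
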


\begin{proof}
It is clear that
\begin{equation*}
\frac{1}{a - x(z)} = \frac{1}{a} \left( 1+ \frac{x(z)}{a} + \frac{x^2(z)}{a^2} + \dots \right).
\end{equation*}

Equation \eqref{lem4} implies the equality of the corresponding coefficients in the formal power series:
\begin{equation}
\label{stat}
\mathbf E (\zeta_{k,I_1} \zeta_{l,I_2}) = \frac{1}{(2 \pi i)^2} \oint_{|z|^2 = \eta_1/ \gamma} \oint_{|w|^2 = \eta_2 / \gamma}
x(z)^k x(w)^l \frac{c/\eta_1}{(c z / \eta_1 - w)^2}.
\end{equation}

Note that in the case $\eta_1=\eta_2$ the integrand from right-hand side of \eqref{lem5} has an integrable singularity. Therefore, it suffices to prove that the right-hand side of \eqref{lem5} is equal to the right-hand side of \eqref{stat} for $\eta_1 < \eta_2$.

For $z$ satisfying $|z|^2 = \dfrac{\eta_1}{\gamma}$ we have
\begin{equation*}
2 \ln \left| \frac{ c/\gamma - z w}{ c/ \gamma - z \bar w } \right| =
- \ln \left( \frac{c}{\eta_1} z -w \right) - \ln \left( \frac{c}{\eta_1} \bar z - \bar w \right)
+ \ln \left( \frac{c}{\eta_1} \bar z -w \right) + \ln \left( \frac{c}{\eta_1} z - \bar w \right).
\end{equation*}
Hence, the integral from \eqref{lem5} can be rewritten as
\begin{multline*}
\frac{k l}{(2 \pi i)^2} \oint_{|z|^2 = \frac{\eta_1}{\gamma}} \oint_{|w|^2 = \frac{\eta_2}{\gamma}}
(x(z))^{k-1} (x(w))^{k-1}
\ln \left( \frac{c}{\eta_1} z -w \right) \frac{d(x(z))}{dz} \frac{d(x(w))}{dw} dz dw.
\end{multline*}

Integrating by parts in $z$ and $w$ yields
\begin{equation*}
\frac{1}{(2 \pi i)^2} \oint_{|z|^2 = \frac{\eta_1}{\gamma}} \oint_{|w|^2 = \frac{\eta_2}{\gamma}}
(x(z))^{k} (x(w))^{l}
\frac{c}{\eta_1} \frac{1}{(c z/ \eta_1 -w)^2} dz dw.
\end{equation*}
This expression coincides with \eqref{stat}.

The statement of the proposition is symmetric in $I_1$ and $I_2$; therefore, the condition $\eta_1 \le \eta_2$ can be removed.

\end{proof}

\section{Proof of asymptotic normality}\label{sc:proof}
\label{section5}
Let $I_1= I_1(L)$, $I_2 = I_2 (L)$, $\dots$, $I_r = I_r(L)$ be finite sets of integers that depend on a large parameter $L$ so that the following limits exist:
\begin{gather*}
\eta_i = \lim_{L \to \infty} \frac{|I_i|}{L} ,\qquad
c_{ij} = \lim_{L \to \infty} \frac{|I_i \cap I_j|}{L}, \qquad i,j = 1,2, \dots, r.
\end{gather*}

The main goal of this section is to prove the following statement.
\begin{proposition}
\label{51main}

For any $f_1 \in Z (\mathfrak{gl} (I_1))$, $f_2 \in Z (\mathfrak{gl} (I_2))$, $\dots$,
$f_r \in Z (\mathfrak{gl} (I_r))$ we have
\begin{equation*}
\left( \frac{f_1 - \langle f_1 \rangle }{L^{wt(f_1)-1}}, \frac{f_2 - \langle f_2 \rangle}{L^{wt(f_2)-1}}, \dots, \frac{f_r - \langle f_r \rangle}{L^{wt(f_r)-1}} \right)
\xrightarrow[L \to \infty]{} (\xi_1, \xi_2, \dots, \xi_r),
\end{equation*}
where $(\xi_1, \xi_2, \dots, \xi_r)$ is a Gaussian random vector with zero mean, and the convergence is understood in the sense of states (see Section \ref{algebra}).
\end{proposition}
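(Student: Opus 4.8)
The plan is to prove asymptotic Gaussianity by the method of moments, equivalently of cumulants, using that convergence ``in the sense of states'' is by definition convergence of the functionals $\langle\,\cdot\,\rangle$; it therefore suffices to show that the joint moments of the centered, rescaled elements converge to the Wick values prescribed by a limiting covariance (the target factorization being the one recorded in Section \ref{algebra}). I would first reduce the general central elements $f_i$ to generators. Since $\{p_{\rho,I}^{\#}\}_{\rho}$ is a basis of $Z(\mathfrak{gl}(I))$ compatible with the weight filtration, and since $p_{\rho,I}^{\#}=p_{k_1,I}^{\#}\cdots p_{k_{l(\rho)},I}^{\#}$ is a commuting product of the single-row generators $p_{k,I}^{\#}=\tr(E_I^{k})$, only the top-weight homogeneous component of each $f_i$ survives after division by $L^{wt(f_i)-1}$ in view of \eqref{w-bound}. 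Thus it is enough to control the joint moments of monomials in the $p_{k,I}^{\#}$.

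The heart of the argument is a Wick-type expansion of $\langle p_{k_1,I_1}^{\#}\cdots p_{k_m,I_m}^{\#}\rangle$ obtained from the explicit differential-operator formula \eqref{dif-vid} applied to the one-sided Plancherel character \eqref{char-formul}. Each factor $p_{k_t,I_t}^{\#}$ becomes $\sum x_{\alpha_1 i_1}\cdots x_{\alpha_{k_t} i_{k_t}}\partial_{\alpha_1 i_{s(1)}}\cdots$ with $s$ a $k_t$-cycle; applying the whole product to $\chi=\exp(\gamma L\sum_i(x_{ii}-1))$ and then setting $x_{ij}=\delta_{ij}$ produces a sum over diagrams in which every derivative $\partial_{\alpha\beta}$ is resolved either by striking the exponential, yielding a factor $\gamma L\,\delta_{\alpha\beta}$, or by contracting with an $x$-variable of another factor. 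I would organize this sum by the connectivity of the resulting diagram on the $m$ factors and track the power of $L$: each hit on the exponential contributes one factor of $L$, each free summation index contributes $|I_t|\sim\eta_t L$, and every cross-contraction between distinct factors consumes a summation index. A careful count then shows that the part of the sum connecting exactly the factors in a block $B$ contributes at most $L^{(\sum_{t\in B}k_t)+2-|B|}$; this reproduces the known orders -- the mean \eqref{matojidanie} for $|B|=1$, of order $L^{k+1}$, and the covariance of Lemma \ref{41} for $|B|=2$, of order $L^{k+l}$ -- and is strictly smaller than the fully disconnected order whenever $|B|\ge 3$.

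Translating this degree bound into the language of cumulants, the joint cumulant of $m$ of the centered generators $(p_{k_t,I_t}^{\#}-\langle p_{k_t,I_t}^{\#}\rangle)$ is governed by the fully connected diagrams and is hence $O(L^{\sum_t k_t + 2 - m})$. After dividing by the normalization $L^{\sum_t k_t}=\prod_t L^{k_t}$, the second cumulants converge to the finite limits computed in Lemma \ref{41} and assembled in Proposition \ref{prop41}, whereas every cumulant of order $m\ge 3$ is $O(L^{2-m})\to 0$. By the standard cumulant criterion for Gaussianity, the normalized centered generators converge in the sense of states to a centered Gaussian family with the prescribed covariance kernel.

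Finally I would lift from generators to arbitrary $f_i$ by linearization. Writing each factor as $p_{k,I}^{\#}=\langle p_{k,I}^{\#}\rangle+(p_{k,I}^{\#}-\langle p_{k,I}^{\#}\rangle)$, with the mean of order $L^{k+1}$ and the fluctuation of order $L^{k}$, the centered product $p_{\rho,I}^{\#}-\langle p_{\rho,I}^{\#}\rangle$ equals, to leading order, the linear fluctuation in which a single generator fluctuates while the others are replaced by their means; the remaining terms are of order at most $L^{wt-2}$ and drop after division by $L^{wt-1}$. Since a deterministic linear combination of jointly asymptotically Gaussian variables is again asymptotically Gaussian, the collection $\{(f_i-\langle f_i\rangle)/L^{wt(f_i)-1}\}$ converges in the sense of states to a Gaussian vector, whose covariance is read off from Proposition \ref{prop41}. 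The main obstacle is the degree estimate in the second step: establishing that the number of surviving free summation indices decreases by at least one for each additional factor in a connected block -- equivalently, the $O(L^{(\sum_{t\in B}k_t)+2-|B|})$ bound -- requires a careful combinatorial analysis of how the cyclic index structure of each $\tr(E_I^{k})$ interacts with the contractions, and this is where the substantive work of the proof lies.
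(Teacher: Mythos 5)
Your overall strategy coincides with the paper's: both arguments expand the product of the differential operators \eqref{dif-vid} against the Plancherel character \eqref{char-formul}, organize the resulting contractions into diagrams, and show that after centering and normalization only pair contributions survive --- exactly the Wick factorization of Proposition \ref{52} and its generalization Proposition \ref{53}. Where you genuinely diverge is in the passage from the single-row generators $p_{k,I}^{\#}$ to arbitrary central elements: the paper redoes the entire graph-combinatorial analysis for general cycle types (multi-fold cycles, Lemmas \ref{lemma54}--\ref{lemma56} in Section \ref{53section}), whereas you exploit the exact factorization $p_{\rho,I}^{\#}=\tr(E_I^{k_1})\cdots\tr(E_I^{k_{l(\rho)}})$ and linearize, letting one factor fluctuate and replacing the rest by their means. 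This is a legitimate and arguably more economical route, but to discard the quadratic-and-higher fluctuation terms \emph{in the sense of states} you need uniform bounds on joint moments of arbitrary products of generator fluctuations (with repeated indices), not just second moments; these do follow from Proposition \ref{52}, but since the state is a trace of a noncommutative product rather than an expectation on a common probability space, this dependence has to be made explicit.

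The genuine gap is the degree estimate you defer to the end, and it is not merely bookkeeping. Your power count (each hit on the exponential gives a factor $L$, each free summation index gives $\eta_t L$, each cross-contraction consumes an index) yields the bound $O(L^{\sum k_t})$ for \emph{every} configuration, connected or not; it does not by itself produce the loss of a power of $L$ for connected blocks of size at least three. That loss comes from the centering: the leading contribution in which all derivatives of a given factor strike the exponential is cancelled by the subtracted mean, and making this cancellation compatible with the connectivity count is precisely the content of Lemmas \ref{lemma51}--\ref{lemma53} --- the inequality $cap(M)\le deg(M)-cov(M)$ with equality only for pairings into coincident $x$- and $\partial$-cycles, together with the commutator argument based on \eqref{commut} for the terms where diagonal and off-diagonal supports intersect. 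Until that is supplied, the claim that a connected block contributes $O(L^{\sum_{t\in B}k_t+2-|B|})$ is an assertion rather than a proof; note also that the paper establishes, and only needs, the weaker bound $O(L^{\sum k_t-1})$ for non-pairable configurations.
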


In Subsection \ref{51section} we prove a particular case of this proposition, in Subsection \ref{proof41} a proof of Lemma \ref{41} is given, in Subsection \ref{53section} we prove Proposition \ref{51main}, and in Subsection \ref{proof44} we prove Lemma \ref{44}.

\subsection{Proof of a particular case of Proposition \ref{51main} }
\label{51section}
In this subsection we prove Proposition \ref{51main} in the case
\begin{equation*}
f_i = p_{k_i,I_i}^{\#}, \qquad i=1,2 \dots, r.
\end{equation*}
Let
\begin{equation*}
\mathcal I = \bigcup_{i=1}^r I_i.
\end{equation*}

Recall that the elements $\{ p_{k_i,I_i}^{\#} \}$ can be written as operators in the algebra of differential operators $\mathbb C[x_{ij}, \partial_{ij}]$, $i,j \in \mathcal I$; they have the following form (see \eqref{dif-vid})
\begin{equation*}
D_{\mathcal I} (p_{k_j, I_j}^{\#}) = \sum_{i_1, \dots, i_{k_j} \in I_j; \alpha_1, \dots \alpha_{k_j} \in \mathcal I}
x_{\alpha_1 i_1} \dots x_{\alpha_{k_j} i_{k_j}} \partial_{\alpha_1 i_2} \dots \partial_{\alpha_{k_j} i_1}.
\end{equation*}
Then a product of $p_{k_j, I_j}^{\#}$ can be viewed as a product of the corresponding differential operators, and the state of the element corresponding to a differential operator $D$ can be computed by the formula
\begin{equation*}
D \exp \left. \left( \gamma L \sum_{i \in \mathcal I} (x_{ii}-1) \right) \right|_{x_{ij}=\delta_{ij}},
\end{equation*}
see Section \ref{enveloping}.

Let $(\xi_1, \xi_2, \dots, \xi_r)$ be a Gaussian random vector with zero mean. Recall that the joint moments of $\xi_i$'s are given by the Wick formula:
\begin{equation}
\label{Wick}
\mathbf E (\xi_{i_1} \xi_{i_2} \dots \xi_{i_l} ) = \begin{cases}
0, \qquad & \mbox{$l$ is odd}, \\
\sum_{\sigma \in \mathcal {PM} (l)} \prod_{i=1}^{l/2} \mathbf E (\xi_{i_{\sigma(2i-1)}} \xi_{i_{\sigma(2i)}}), \qquad & \mbox{$l$ is even},
\end{cases}
\end{equation}
where $\mathcal {PM} (l)$ is the set of perfect matchings on $\{ 1,2, \dots, l \}$.

Define
\begin{equation*}
\nu_j := p_{k_j,I_j}^{\#} - \langle p_{k_j,I_j}^{\#} \rangle, \qquad  j=1, 2, \dots, r,
\end{equation*}
and let
\begin{equation*}
C_{\nu} (i,j) = \lim_{L \to \infty} \frac{\langle \nu_i \nu_j \rangle}{L^{k_i+k_j}}, \qquad i,j =1,2, \dots, r,
\end{equation*}
be the asymptotic covariance of these elements (the existence of the limit is proved below).

\begin{proposition}
\label{52}
In the notations above we have
\begin{equation*}
\frac{\langle \nu_1 \nu_2 \cdots \nu_r \rangle}{L^{k_1 +k_2 + \dots +k_r}} \xrightarrow[L \to \infty]{} \begin{cases}
0, \qquad & \mbox{$r$ is odd}, \\
\sum_{\sigma \in \mathcal {PM} (r)} \prod_{i=1}^{r/2} C_{\nu} (\sigma(2i-1), \sigma(2i)), \qquad  & \mbox{$r$ is even}.
\end{cases}
\end{equation*}
\end{proposition}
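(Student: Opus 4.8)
The plan is to compute $\langle\nu_1\cdots\nu_r\rangle$ by a diagrammatic expansion of the differential operators \eqref{dif-vid} acting on the exponential \eqref{char-formul}, and then to run a power-counting argument in $L$ controlled by the combinatorics of contractions. Expanding the centering, $\langle\nu_1\cdots\nu_r\rangle$ is an alternating sum of joint states $\langle p_{k_{j_1},I_{j_1}}^{\#}\cdots p_{k_{j_s},I_{j_s}}^{\#}\rangle$, and each such state equals
\[
D_{\mathcal I}(p_{k_{j_1},I_{j_1}}^{\#})\cdots D_{\mathcal I}(p_{k_{j_s},I_{j_s}}^{\#})\,\exp\!\Big(\gamma L\sum_{i\in\mathcal I}(x_{ii}-1)\Big)\Big|_{x_{ij}=\delta_{ij}}.
\]
Normal-ordering the product and applying the Leibniz rule, every derivative $\partial_{\alpha\beta}$ occurring in the $a$-th factor either (i) differentiates the exponential, producing $\gamma L\,\delta_{\alpha\beta}$, or (ii) contracts a variable $x_{\gamma\delta}$ of a later factor $b>a$, producing $\delta_{\alpha\gamma}\delta_{\beta\delta}$; any $x$ not hit by a derivative is finally set to $\delta$ at the evaluation $x_{ij}=\delta_{ij}$. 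A single term of the expansion is therefore specified by the summation indices together with a \emph{contraction pattern}, and to each pattern I attach the graph $G$ on $\{1,\dots,r\}$ whose edges join operators linked by a type-(ii) contraction.

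First I would dispose of the centering. The constants $\langle p_{k_j,I_j}^{\#}\rangle$ given by \eqref{matojidanie} are exactly the value of the $j$-th operator when all its derivatives are of type (i); consequently the contributions of patterns in which $j$ is an \emph{isolated} vertex of $G$ cancel against the subtracted mean. Hence after centering only graphs $G$ without isolated vertices remain.

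The core of the argument is the following power count. Writing $K=k_1+\cdots+k_r$, let $t$ be the number of type-(i) derivatives of a pattern and let $c$ be the number of independent index summations left after all Kronecker constraints (from the exponential, the internal contractions, and the evaluation) are imposed; since each remaining sum ranges over a set of the form $I_a$ or $I_a\cap I_b\cap\cdots$ of size $\asymp L$, the pattern contributes at order $L^{t+c}$. I would prove that, for a pattern whose graph $G$ has $p$ connected components,
\[
t+c\ \le\ K+2p-r,
\]
with equality attained by the diagrams relevant below. The bound is calibrated against the two extreme cases already understood: the fully disconnected pattern ($p=r$, all derivatives of type (i)) gives $t=K$ and $c=r$, reproducing the product of means $\asymp L^{K+r}$, while the connected two-operator pattern ($p=1$, $r=2$) gives $L^{k_a+k_b}$ in agreement with Lemma \ref{41}. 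The content of the estimate is that making $G$ more connected is costly: each reduction of the number of components lowers the exponent by two, because replacing a type-(i) derivative by an internal contraction both removes a factor $\gamma L$ and merges two previously independent index summations. I expect this uniform connectivity estimate --- essentially an Euler-characteristic count for the ribbon graph obtained by gluing the $r$ cycles coming from the trace structure of \eqref{dif-vid} along the type-(ii) contractions --- to be the main obstacle, the delicate point being to show that free index sums are consumed at exactly the claimed rate and not merely bounded.

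Granting the estimate, the conclusion follows quickly. After dividing by $L^{K}$ a pattern survives the limit only if $t+c\ge K$, i.e. $p\ge r/2$; but every component of $G$ has at least two vertices, so $p\le r/2$. Thus only $p=r/2$ contributes, which forces $r$ even and $G$ to be a perfect matching $\sigma\in\mathcal{PM}(r)$; for odd $r$ no such $G$ exists and the limit is $0$. For even $r$, a matching diagram splits into $r/2$ two-operator pieces on disjoint operators whose index summations decouple at leading order, so both $t$ and $c$ are additive over the edges and the contribution factorizes as $\prod_{i=1}^{r/2}C_\nu(\sigma(2i-1),\sigma(2i))$, each factor being the normalized connected two-point function supplied by Lemma \ref{41}. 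Summing over $\sigma\in\mathcal{PM}(r)$ gives the Wick formula of the proposition.
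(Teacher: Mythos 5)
Your proposal is correct in outline, but it organizes the combinatorics quite differently from the paper. You run a single Wick-type contraction expansion and attach to each term a graph on the $r$ \emph{operators} (edges = internal contractions), killing isolated vertices by the standard cumulant cancellation and then power-counting via $t+c\le K+2p-r$, which is an Euler-characteristic/genus bound for the ribbon graph obtained by gluing the $r$ trace-cycles along contractions. The paper instead first splits each $\nu_l$ into a diagonal part ($x_{ii}^{k}\partial_{ii}^{k}-\gamma^kL^k$) and an off-diagonal part, treats the diagonal factors by a one-variable computation (Lemma \ref{lemma51}), encodes the off-diagonal factors by graphs on the \emph{matrix indices} (vertices = indices, edges = letters $x_{ij},\partial_{ij}$), and proves the key inequality $cap(M)\le deg(M)-cov(M)$ through the inductive statement $|V(G)|\le|E_x(G)|$ for $\partial$-regular gluings of cycles (Lemmas \ref{lemma52}--\ref{lemma53}), handling the diagonal/off-diagonal cross terms by a separate commutator-moving argument. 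Your route avoids that case split and the somewhat delicate cross-term analysis, and makes the ``each new connection costs $L^{-2}$'' mechanism transparent; the paper's route buys an explicit characterization of the equality case (pairs of cycles with coincident supports), which it then reuses verbatim to prove Lemma \ref{41} and Lemma \ref{lemma55}. The one place where your write-up is genuinely thin is the central estimate itself: you assert $t+c\le K+2p-r$ and correctly flag it as the main obstacle, but do not prove it. It does hold, and can be established without any appeal to orientability: the Kronecker constraints of the empty pattern form $r$ disjoint cycles on the $2K$ index symbols, each contraction is a cut-and-reconnect operation on two edges of these cycles and so changes the number of index classes by exactly $\pm1$, and any contraction joining two components of $G$ is forced to merge two index cycles (a $-1$); since a spanning forest requires $r-p$ such merges, $c\le r+m-2(r-p)=\mathrm{cyc}(G)+p$, which combined with $t=K-m$ gives the bound, with equality forcing $p=r/2$ and hence the perfect-matching structure. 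With that lemma supplied, and with the observation that matched pairs sharing a summation index contribute one power of $L$ less (so the leading term factorizes), your argument is complete.
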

This proposition and Wick's formula \eqref{Wick} imply that elements $\dfrac{\nu_i}{L^{k_i}}$, $i=1,2, \dots,r$, are asymptotically Gaussian.

Let us introduce some notation. For a monomial $M$ in $\{ x_{ij}, \partial_{ij} \}$ (i.e. a word in the alphabet $\{ x_{ij}, \partial_{ij} \}$) we define the \emph{support} $supp(M)$ as the set of indices of all letters in the monomial. We call the number of elements in the support the \emph{coverage} and write $cov(M)$. We say that the \emph{$x$-degree} of a monomial is the number of $x$-factors in it, the \emph{$\partial$-degree} is the number of $\partial$-factors, and if the $x$-degree and the $\partial$-degree coincide, we call this number the \emph{degree} of a monomial $M$ and write $deg (M)$. The number of diagonal $\partial$-factors $\partial_{ii}$ in $M$ is called the \emph{capacity} and is denoted by $cap(M)$.

Given two monomials $M_1$ and $M_2$, we say that they are \emph{isomorphic} if there exists a bijection of $supp(M_1)$ and $supp(M_2)$ converting them into each other. By $Isom(M_1)$ we denote the set of monomials with indices from $\mathcal I$ which are isomorphic to $M_1$.

For example, for $M = x_{12} \partial_{23} x_{23} x_{22} \partial_{41} \partial_{33}$ we have $supp(M) = \{ 1,2,3,4 \}$, $cov(M)=4$, $deg(M)=3$, $cap(M)=1$.

Note that for every monomial $M_0$ we have
\begin{equation*}
\langle M_0 \rangle = O\bigl(L^{cap(M_0)}\bigr),
\qquad
\left\langle \sum_{ M: M \in Isom(M_0)} M \right\rangle = O \bigl( |\mathcal I|^{cov(M_0)} L^{cap(M_0)} \bigr).
\end{equation*}

We say that a monomial $M$ is \emph{$\partial$-regular} if for any $i,j \in supp(M)$, $i \ne j$, and for any factor $\partial_{ij}$ in it, there are strictly more letters $x_{ij}$ then $\partial_{ij}$ to the right of this factor. We say that a monomial $M$ is \emph{$x$-regular} if for any $i,j \in supp(M)$, $i \ne j$, and any factor $x_{ij}$ in it, there are strictly more letters $\partial_{ij}$ then $x_{ij}$ to the left of this factor. A monomial $M$ is called \emph{regular} if it is $\partial$-regular and $x$-regular. It is easy to see that if $M$ is not regular then
\begin{equation*}
M \exp \left. \left(\gamma L \sum (x_{ii}-1) \right) \right|_{x_{ij}=\delta_{ij} } = 0.
\end{equation*}

We also extend some of these notions to differential operators of the form
\begin{equation}
\label{firstType}
D = (x_{i_1 i_1}^{l_1} \partial_{i_1 i_1}^{l_1} - \gamma^{l_1} L^{l_1} ) \cdots (x_{i_s i_s}^{l_s} \partial_{i_s i_s}^{l_s} - \gamma^{l_s} L^{l_s} ).
\end{equation}
The \emph{support} of such an operator is the set of indices $\{ i_1, i_2, \dots, i_s \}$, the \emph{coverage} is the number of distinct elements in the support, the \emph{degree} is equal to $l_1 + l_2 + \dots +l_s$.

Let us formulate two lemmas which we prove below.
\begin{lemma}
\label{lemma51}
For any operator of the form \eqref{firstType} we have
\begin{equation*}
\langle D \rangle = O (L^{deg(D) - cov(D)}).
\end{equation*}
Moreover, we have
\begin{equation}
\label{minus1}
\langle D \rangle = O( L^{ deg(D) -cov(D) -1})
\end{equation}
if $\{ i_1, i_2, \dots, i_s \}$ is not a disjoint union of pairs of equal indices (in particular,
\eqref{minus1} holds for odd $s$).
\end{lemma}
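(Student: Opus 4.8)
The plan is to use that the operator \eqref{firstType} involves only the diagonal coordinates $x_{ii}$, together with the product structure of the character \eqref{char-formul}. First I would group the factors of $D$ according to their diagonal index. Factors carrying different indices act on different variables and commute, while factors carrying the \emph{same} index $i$ are simultaneously diagonalized by the monomials in $x_{ii}$: indeed $x^l\partial^l x^n=(n)_l\,x^n$, where $(n)_l:=n(n-1)\cdots(n-l+1)$, so these commute as well. Hence, writing $\mu:=\gamma L$ and recalling \eqref{sost}, the state factorizes as $\langle D\rangle=\prod_i\phi_i$, the product being over the distinct indices $i$ occurring in the support of $D$, where each $\phi_i$ is the single-variable quantity built from the factors attached to $i$.

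Next I would rewrite each $\phi_i$ probabilistically. Since $\exp(\mu(x-1))=e^{-\mu}\sum_{n\ge0}\frac{\mu^n}{n!}x^n$ is the probability generating function of a Poisson random variable $N$ with parameter $\mu$, and $x^l\partial^l$ multiplies $x^n$ by $(n)_l$, applying the commuting diagonal operators attached to $i$ and then setting $x=1$ gives
\begin{equation*}
\phi_i=\mathbf{E}\Bigl[\prod_{a:\,i_a=i}\bigl((N)_{l_a}-\mu^{l_a}\bigr)\Bigr],\qquad N\sim\mathrm{Poisson}(\mu).
\end{equation*}
The subtracted constants are exactly the means, since $\mathbf{E}[(N)_{l}]=\mu^{l}$; in particular each factor is centered, so $\phi_i=0$ whenever the index $i$ occurs only once.

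It remains to estimate $\phi_i$ as $L\to\infty$. Put $\bar N:=N-\mu$ and expand each factor as $(N)_{l}-\mu^{l}=\sum_{j\ge0}c_{l,j}\bar N^{\,j}$, where $c_{l,j}=O(\mu^{\,l-j})$ for $j\ge1$ and the constant term is $c_{l,0}=(\mu)_{l}-\mu^{l}=O(\mu^{\,l-1})$. Substituting these into the product over the $s_i:=\#\{a:i_a=i\}$ factors and using the standard Poisson central-moment bound $\mathbf{E}[\bar N^{\,J}]=O(\mu^{\lfloor J/2\rfloor})$ together with the vanishing $\mathbf{E}[\bar N]=0$, the term indexed by exponents $(j_a)$ with $J=\sum_a j_a$ and $k=\#\{a:j_a=0\}$ has order at most $\mu^{d_i-k-\lceil J/2\rceil}$, where $d_i=\sum_{a:i_a=i}l_a$ and $\sum_i d_i=deg(D)$. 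A short optimization shows that $k+\lceil J/2\rceil$ is minimized at the value $\lceil s_i/2\rceil$, whence $\phi_i=O(L^{\,d_i-\lceil s_i/2\rceil})$ (and $\phi_i=0$ if $s_i=1$). Therefore $\phi_i=O(L^{\,d_i-1})$ for every $i$, and $\phi_i=O(L^{\,d_i-2})$ whenever $s_i$ is odd.

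Finally I would multiply over the $cov(D)$ distinct indices. Since $\sum_i\lceil s_i/2\rceil\ge cov(D)$, this yields $\langle D\rangle=O(L^{\,deg(D)-cov(D)})$, the first assertion. If the multiset $\{i_1,\dots,i_s\}$ is not a disjoint union of pairs of equal indices, then some index occurs an odd number of times; the corresponding factor contributes one extra saved power (or makes the product vanish), giving $\langle D\rangle=O(L^{\,deg(D)-cov(D)-1})$, which is \eqref{minus1}. I expect the main obstacle to be precisely this last order estimate: one must combine the zero mean of $\bar N$ with the extra power lost in the constant term $c_{l,0}=O(\mu^{l-1})$ and carry out the optimization over $(j_a)$ correctly. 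The ``disjoint union of pairs'' hypothesis is exactly what guarantees that every single-index multiplicity $s_i$ is even, so that no extra power is gained.
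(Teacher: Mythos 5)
Your proof is correct, and it takes a genuinely different route from the paper's for the core single--index estimate. Both arguments begin the same way: factors of \eqref{firstType} with distinct indices commute and act on distinct variables of the product character \eqref{char-formul}, so the state factorizes over the distinct indices and one is reduced to a one--variable computation. From there the paper argues algebraically: it opens the parentheses in one factor, uses the commutation relation \eqref{commut} to move all $\partial$'s to the right, observes that the ``no annihilation'' term cancels exactly against the subtracted constant $(\gamma L)^{l}$, and notes that each mutual annihilation of an $x$ and a $\partial$ costs one power of $L$; a second iteration of the same cancellation handles the case of three or more factors sharing an index. You instead observe that the one--variable state is literally an expectation over a Poisson$(\gamma L)$ variable $N$ (consistent with the poissonization structure of $P^\gamma_L$ from Section \ref{probMeasure}), that $x^l\partial^l$ produces the falling factorial $(N)_l$ whose Poisson mean is exactly $\mu^l$ --- so each factor is centered --- and then you control the product of centered factors via the expansion in powers of $\bar N=N-\mu$ together with the central-moment bound $\mathbf E[\bar N^J]=O(\mu^{\lfloor J/2\rfloor})$. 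Your bookkeeping (the coefficient estimates $c_{l,j}=O(\mu^{l-j})$ for $j\ge 1$, $c_{l,0}=O(\mu^{l-1})$, and the minimization of $k+\lceil J/2\rceil$ at $\lceil s_i/2\rceil$) is correct, and it yields the per-index bound $O(L^{d_i-\lceil s_i/2\rceil})$, which is slightly sharper than the paper's $O(L^{d_i-2})$ for multiplicities $s_i\ge 3$ and of course implies both assertions of the lemma, including \eqref{minus1} when some multiplicity is odd. What the paper's approach buys is that the same ``move $\partial$'s and cancel'' argument extends verbatim to the more general diagonal factors of Lemma \ref{lemma54} and is reused inside the proof of Proposition \ref{52}; what your approach buys is an exact probabilistic identity for the diagonal part of the state and a cleaner quantitative exponent.
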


\begin{lemma}
\label{lemma52}
Let $C(1), \dots, C(m)$ be monomials of the form
\begin{equation*}
C(l) = x_{\alpha_1^l i_1^l} \dots x_{\alpha_{k_l}^l i_{k_l}^l } \partial_{\alpha_1^l i_2^l} \dots
\partial_{\alpha_{k_l}^l i_1^l}, \ \ 1 \le l \le m,
\end{equation*}
with $cov(C(l)) \ge 2$. \footnote{In the notation $\alpha_m^l$, $i_n^l$ the upper $l$ is an additional index, not a degree.} Assume that the monomial
\begin{equation*}
M = C(1) C(2) \cdots C(m)
\end{equation*}
is regular. Then we have
\begin{equation*}
cap(M) \le deg(M) - cov(M).
\end{equation*}
Moreover, the equality holds iff $m$ is even and there are $m/2$ disjoint sets $J_1, \dots J_{m/2}$ and a partition of $\{ C(1), \dots, C(m) \}$ into pairs $\{ C(j_1), C(j_2) \}$, $\dots$, $\{ C(j_{m-1}), C(j_m) \}$, such that
\begin{equation*}
J_k = supp(C(j_{2k-1})) = supp ( C(j_{2k})), \qquad  k=1,2, \dots ,\frac{m}{2}.
\end{equation*}
\end{lemma}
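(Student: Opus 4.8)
The plan is to convert the inequality into a lower bound on the number of off-diagonal $\partial$-factors, and then to prove that bound by a degree count on an auxiliary multigraph. First I would record that $deg(M)=\sum_l k_l$ is the total number of $\partial$-factors, and that by definition $cap(M)$ is the number of \emph{diagonal} $\partial$-factors; writing $E$ for the number of off-diagonal $\partial$-factors, we have $cap(M)=deg(M)-E$, so the assertion $cap(M)\le deg(M)-cov(M)$ is literally equivalent to $cov(M)\le E$. Thus it suffices to bound the number of distinct indices by the number of off-diagonal $\partial$'s.

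Next I would extract the combinatorial content of regularity. For a fixed off-diagonal ordered pair $(p,q)$, read each $\partial_{pq}$ as an opening and each $x_{pq}$ as a closing bracket, in the order in which these factors occur in $M$; the conditions defining $x$-regularity and $\partial$-regularity say precisely that this bracket word is balanced, so in particular the numbers of $x_{pq}$ and of $\partial_{pq}$ coincide and there is a canonical bijection pairing every off-diagonal $\partial_{pq}$ with a later $x_{pq}$. Since inside a single $C(l)$ all $x$'s precede all $\partial$'s (see \eqref{dif-vid}), the $x$ matched to a $\partial$ of $C(l)$ always lies in a strictly later factor $C(l')$, $l'>l$. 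In parallel I would encode each $C(l)$ as a closed walk $W_l$ of length $2k_l$ on the index set, alternating $x$- and $\partial$-steps: $x_{\alpha_s i_s}$ joins $\alpha_s$ to $i_s$ and $\partial_{\alpha_s i_{s+1}}$ joins $\alpha_s$ to $i_{s+1}$, so that $W_l$ visits exactly $supp(C(l))$ and diagonal factors appear as loops.

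I would then build the multigraph $\Gamma$ whose vertices are the distinct indices of $M$ and which has one edge $\{p,q\}$ for each matched pair $\{\partial_{pq},x_{pq}\}$; by construction $\Gamma$ has exactly $E$ edges, so $cov(M)\le E$ follows once I show that $\Gamma$ has minimum degree at least $2$, since then $2E=\sum_v\deg_\Gamma(v)\ge 2\,cov(M)$. Two things must be ruled out. First, no vertex is isolated: every index $v$ occurs in some $C(l)$, and if all factors of $W_l$ incident to $v$ were loops, the closed walk could neither enter nor leave $v$, forcing $cov(C(l))=1$ and contradicting the hypothesis $cov(C(l))\ge2$; hence $v$ lies in an off-diagonal factor and therefore in a matched pair. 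Second, no vertex has degree $1$: if $v$ belonged to a unique matched pair, then its only two off-diagonal incidences would be the $\partial$ and the $x$ of that pair, and these lie in two different walks, so $v$ would have exactly one off-diagonal incidence in each of them; but in any closed walk the number of non-loop edge-ends at a vertex is even, a contradiction. Hence $\deg_\Gamma\ge2$ everywhere and the inequality is proved.

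It remains to treat the equality case. From $2E=\sum_v\deg_\Gamma(v)$ together with $\deg_\Gamma\ge2$ we get $cap(M)=deg(M)-cov(M)$ if and only if $E=cov(M)$, i.e. exactly when $\Gamma$ is $2$-regular. The remaining task, which I expect to be the main obstacle, is to show that $2$-regularity of $\Gamma$ is equivalent to the stated partition of $\{C(1),\dots,C(m)\}$ into pairs of equal support. The substantive direction is to propagate the condition that every index lies in exactly two matched pairs through the alternating closed-walk structure of the $W_l$, in order to conclude that the walks group into pairs $\{C(j_{2k-1}),C(j_{2k})\}$ with $supp(C(j_{2k-1}))=supp(C(j_{2k}))=J_k$ and with the $J_k$ pairwise disjoint; the converse, that any such support-pairing of a regular $M$ makes $\Gamma$ $2$-regular and hence produces equality, is a direct verification modeled on the basic example $C=x_{aa}x_{bb}\partial_{ab}\partial_{ba}$, $C'=x_{ab}x_{ba}\partial_{aa}\partial_{bb}$, for which $cap=deg-cov=2$.
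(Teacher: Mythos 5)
Your argument for the inequality $cap(M)\le deg(M)-cov(M)$ is correct and takes a genuinely different route from the paper. The paper encodes each $C(l)$ as a graph and proves, by induction on the number of glued cycles (Lemma \ref{lemma53}), that $|V(G)|\le |E_x(G)|$ for any $\partial$-regular gluing; you instead observe that regularity forces, for each ordered off-diagonal pair $(p,q)$, a perfect matching between the letters $\partial_{pq}$ and later letters $x_{pq}$, and then run a degree count on the resulting matching multigraph $\Gamma$. Your two parity observations (a vertex of a cycle with coverage at least $2$ must meet a non-loop edge of that cycle; a closed walk has an even number of non-loop edge-ends at every vertex, while the two letters of a matched pair necessarily lie in distinct factors $C(l)$) do establish $\deg_\Gamma\ge 2$, hence $cov(M)\le E = deg(M)-cap(M)$. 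For the inequality alone this is shorter and cleaner than the paper's induction.

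However, the ``moreover'' part of the lemma --- the characterization of equality --- is not proved; you explicitly defer it as ``the remaining task'' and ``the main obstacle'', and that is precisely where the real content lies. The equality case is what Proposition \ref{52} uses to identify the leading-order contributions as sums over perfect matchings, i.e.\ it is the source of Wick's formula and of the asymptotic Gaussianity; without it the lemma only bounds the order of the moments. Your reduction of equality to $2$-regularity of $\Gamma$ is correct, but the implication ``$\Gamma$ is $2$-regular $\Rightarrow$ the $C(l)$ pair up with equal, pairwise disjoint supports'' is not a routine propagation: one must rule out, for instance, $\Gamma$ containing a longer cycle (say a triangle on $\{a,b,c\}$ with the three matched pairs spread over three different factors), or a single $C(l)$ supplying all four non-loop incidences at some vertex, and one must show that in the equality case each $C(l)$ is forced to have either all of its $x$-letters or all of its $\partial$-letters diagonal (the paper's $x$-cycles and $\partial$-cycles) before the supports can be matched into pairs. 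The parity bookkeeping you set up does not by itself distinguish these configurations; the case analysis in the paper's proof of Lemma \ref{lemma53} (the three cases describing how a new cycle attaches to the already-built graph, and the tracking of when $|V|-|E_x|$ strictly drops) is essentially the argument that is still needed, and it is absent from your proposal.
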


Let us prove Proposition \ref{52} with the use of Lemmas \ref{lemma51} and \ref{lemma52}. Proofs of the lemmas are given below.

\begin{proof}
Using \eqref{dif-vid} and \eqref{matojidanie}, we can write $\nu_l$ as
\begin{equation}
\label{diag}
\nu_l = \sum_{i \in I_l} (x_{ii}^{k_l} \partial_{ii}^{k_l} - \gamma^{k_l} L^{k_l})  +
\sum_{\alpha_1, \dots, \alpha_{k_l} \in \mathcal I; \ i_1, \dots, i_{k_l} \in I_l}
x_{\alpha_1 i_1} \dots x_{\alpha_{k_l} i_{k_l} } \partial_{\alpha_1 i_2} \dots \partial_{\alpha_{k_l} i_1},
\end{equation}
where the second sum is over the monomials whose coverage is at least 2. Denote the two terms from \eqref{diag} by $\nu_l^{diag}$ and $\nu_l^{off-diag}$, respectively.

Let us open the parentheses in the product
\begin{equation*}
\nu_1 \nu_2 \cdots \nu_r = (\nu_1^{diag} + \nu_1^{off-diag}) \cdots (\nu_r^{diag} + \nu_r^{off-diag} ).
\end{equation*}
We obtain a sum of several terms. Every term is a product of several factors, and each factor is a sum over indices; let us also open the parentheses in each term. In such a way $\nu_1 \nu_2 \cdots \nu_l$ is represented as a sum over indices of products of factors of the form $(x_{ii}^k \partial_{ii}^k - \gamma^k L^k)$ (we call them \emph{diagonal}) and factors of the form $x_{\alpha_1 i_1} \cdots x_{\alpha_k i_k} \partial_{\alpha_1 i_2} \cdots \partial_{\alpha_k i_1}$ whose coverage is greater or equal to 2 (we call them \emph{off-diagonal}).

Consider one term from this sum. Denote by $D_{a_1}, \dots, D_{a_s}$ its diagonal factors coming from $\nu_{a_1}^{diag}, \dots, \nu_{a_s}^{diag}$, $a_1 < \dots <a_s$, and denote by $C_{b_1}, \dots, C_{b_t}$ its off-diagonal factors coming from $\nu_{b_1}^{off-diag}, \dots, \nu_{b_t}^{off-diag}$,
$b_1 < \dots < b_t$, $s+t=r$.

Note that if such a term gives a nonzero contribution to $\langle \nu_1 \nu_2 \dots \nu_r \rangle$, then $C_{b_1} \cdots C_{b_t}$ is regular. Therefore, using Lemma \ref{lemma52} we obtain
\begin{equation*}
cap (C_{b_1} \cdots C_{b_t}) \le k_{b_1} + \dots + k_{b_t} - cov( C_{b_1} \cdots C_{b_t}),
\end{equation*}
where the equality holds iff the factors $C_{b_1}, \dots, C_{b_t}$ can be divided into pairs with disjoint supports for different pairs.

Consider the terms satisfying
\begin{equation*}
supp(D_{a_1} \cdots D_{a_s}) \cap supp(C_{b_1} \cdots C_{b_t}) = \varnothing.
\end{equation*}
From Lemmas \ref{lemma51} and \ref{lemma52} it follows that the contribution of one term is
\begin{equation*}
O( L^{k_1 + \dots + k_r - cov( D_{a_1} \cdots D_{a_s}) - cov( C_{b_1} \cdots C_{b_t}) } ),
\end{equation*}
if both $\{ D_{a_1}, \dots, D_{a_s} \}$ and $\{ C_{b_1}, \dots, C_{b_t} \}$ can be divided into pairs with disjoint supports, and is equal to
\begin{equation*}
O( L^{k_1 + \dots + k_r - cov( D_{a_1} \cdots D_{a_s}) - cov( C_{b_1} \cdots C_{b_t}) -1} )
\end{equation*}
otherwise. In the sum for $\langle \nu_1 \nu_2 \cdots \nu_r \rangle$ there are
\begin{equation*}
O(L^{cov(D_{a_1} \cdots D_{a_s}) + cov(C_{b_1} \cdots C_{b_t}) } )
\end{equation*}
terms of such a form; therefore, their overall contribution has order $L^{k_1 + \dots + k_r}$ only if the supports can be divided into disjoint pairs. It is easy to see that
\begin{multline*}
\langle \nu_1 \nu_2 \rangle = \langle \nu_1^{diag} \nu_2^{diag} \rangle + \langle \nu_1^{off-diag} \nu_2^{off-diag} \rangle
= \\ \sum_{supp(D_1) = supp(D_2) \subset I_1 \cap I_2} \langle D_1 D_2 \rangle +
\sum_{ supp(C_1) = supp(C_2) \subset I_1 \cap I_2} \langle C_1 C_2 \rangle.
\end{multline*}
Hence, the overall contribution of these terms can be written as
\begin{equation*}
\sum_{\sigma \in \mathcal {PM} (r)} \langle \nu_{\sigma(1)} \nu_{\sigma(2)} \rangle \cdots \langle \nu_{\sigma(r-1)} \nu_{\sigma(r)} \rangle
+ O(L^{k_1 + k_2 + \dots +k_r -1}).
\end{equation*}

Now we need to prove that the terms with
\begin{equation}
\label{intersect}
supp(D_{a_1} \cdots D_{a_s}) \cap supp(C_{b_1} \cdots C_{b_t}) \ne \varnothing
\end{equation}
give a contribution of order $O(L^{k_1+k_2 + \dots +k_r - 1})$. Consider one of these terms; denote it by $S$. Consider one of its factors $D_j$ with $supp(D_j) \subset supp(C_{b_1} \cdots C_{b_t})$ and remove $D_j$ from $S$; we obtain an operator $S'$. The state of $S'$ can be estimated as follows: By Lemma \ref{lemma52} we get
\begin{equation*}
cap( C_{b_1} \cdots C_{b_t} ) \le deg( C_{b_1} \cdots C_{b_t} ) - cov (C_{b_1} \cdots C_{b_t} ).
\end{equation*}
The factors of the diagonal type with supports in $supp(C_{b_1} \cdots C_{b_t})$ contribute to the state no more then $L^{deg}$. The product of factors of the diagonal type with supports not in $supp(C_{b_1}, \dots, C_{b_t})$ by Lemma \ref{lemma51} contributes no more than $L^{deg - cov}$.
Therefore,
\begin{equation*}
\langle S' \rangle = O (L^{deg(S') - cov(S')}).
\end{equation*}
Let us estimate the state of the whole $S$. Suppose
\begin{equation*}
D_j = x_{ss}^l \partial_{ss}^l - \gamma^l L^l.
\end{equation*}
Recall that in the algebra $\mathbb C [x_{ij}, \partial_{ij}]$ the following commutation relations hold
\begin{equation}
\label{commut}
[\partial_{ij}, x_{ij}] = \mathbf 1_{(i,j) = (k,l)}.
\end{equation}

Applying these relations we ``move'' all $x_{ss}$'s from $D_j$ to the left and ``move'' all $\partial_{ss}$'s to the right. After this operation $S$ is written as a sum of several terms.
Note that the term in which all $x_{ss}$'s are on the left and all $\partial_{ss}$'s are on the right gives exactly the same contribution as $\gamma^l L^l$ multiplied by $\langle S' \rangle$.
Therefore, the contribution of this term and the contribution of $( - \gamma^l L^l)$ from $D_j$ cancel out. On the other hand, if $x_{ss}$ or $\partial_{ss}$ interact with other factors in the process of ``moving'', then the total number of $\partial_{ss}$ decreases, and the same arguments as in the estimation of $\langle S' \rangle$ show that the contribution of such a term has order
\begin{equation*}
O( L^{k_1 +\dots +k_r - 1 - cov(S)}).
\end{equation*}
The summation over indices contributes $O(L^{cov(S)})$; therefore, the total contribution of terms of the form \eqref{intersect} is equal to $O(L^{k_1 +\dots + k_r -1})$. This completes the proof of Proposition \ref{52}.

In particular, for $r=2$ these arguments imply that $\langle \nu_1 \nu_2 \rangle$ has order $L^{k_1 + k_2}$. The contribution to $L^{k_1+k_2}$ is given by classes of isomorphic graphs. Note that there is a finite number of different classes of non-isomorphic graphs; therefore, there is a limit for the asymptotic covariance, i.e. the limit of $L^{-(k_1+k_2)} \langle \nu_1 \nu_2 \rangle$.

\emph{Proof of Lemma \ref{lemma51}}.
Factors of the form \eqref{firstType} with different indices commute. Hence, it suffices to prove that
\begin{multline}
\label{lemma3'}
\left. \left( x^{l_1} \partial^{l_1} - (\gamma L)^{l_1} \right) \dots \left( x^{l_q} \partial^{l_q} - (\gamma L)^{l_q} \right)
\exp (\gamma L (x-1) ) \right|_{x=1} \\ = \begin{cases}
O (L^{l_1+ \dots + l_q -1}), \ \ \mbox{ for $q =2$}, \\
O (L^{l_1 + \dots + l_q -2}),  \ \ \mbox{ for $q \ge 3$}.
\end{cases}
\end{multline}
Let us open the parentheses in the first factor of the expression
\begin{equation*}
\left( x^{l_1} \partial^{l_1} - (\gamma L)^{l_1} \right) \cdots \left( x^{l_q} \partial^{l_q} - (\gamma L)^{l_q} \right).
\end{equation*}

Applying relations \eqref{commut}, we move all $\partial$ to the right. After this operation several terms appear. The term in which all $\partial$ survive and are on the right gives exactly the same contribution as the factor $\gamma^l L^l$; therefore, the contribution of this term and the contribution of $(-\gamma L)^l$ from the first factor cancel out. If a term arises after $A$ mutual annihilations of $x$ and $\partial$ then it is easy to see that the state of such a term is equal to $O(L^{l_1+ \dots +l_q -A})$. Relation \eqref{lemma3'} follows from that in the case $q=2$. If $q \ge 3$ and $A =1$ then the corresponding term can be written as
\begin{equation*}
\cdots (x^{l_k} \partial^{l_k} - (\gamma L)^{l_k} ) \cdots .
\end{equation*}
Opening the parentheses in the middle factor and applying the same arguments,
we obtain that the contribution of the term without mutual annihilations of $x$ and $\partial$ disappears again. All other terms contribute $O(L^{l_1 + \dots +l_q -2})$; therefore,  \eqref{lemma3'} holds.

\emph{Proof of Lemma \ref{lemma52}.}
We shall code words in the alphabet $\{ x_{ij}, \partial_{ij} \}$ by graphs with an additional structure. To each index from the support of a monomial we assign a vertex of the graph. The edges of the graph can be of two types: $x$-edges and $\partial$-edges. To each letter $x_{ij}$ we assign an oriented $x$-edge between vertices corresponding to indices $i$ and $j$. Similarly, for each letter $\partial_{ij}$ we assign an oriented $\partial$-edge between the same vertices. Besides, we introduce a linear order on the edges of the graph; edge $e_1$ precedes edge $e_2$ if the letter corresponding to $e_1$ is to the right of the letter corresponding to $e_2$. This linear order corresponds to the order in which the factors are applied to the function $\exp \left(\gamma L \sum (x_{ii}-1) \right)$.

For example, the monomial $x_{11} x_{22} x_{12} \partial_{31} x_{23} \partial_{32}$ is coded
by the graph shown in Figure \ref{firstExample}. \footnote{The linear order on the edges is not shown in figures.}

\begin{figure}
\begin{center}
\includegraphics[height=4.0cm]{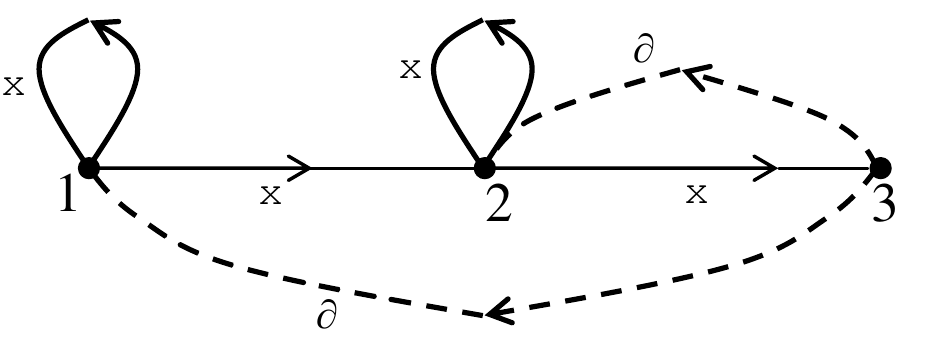}
\caption{The graph corresponding to the monomial $x_{11} x_{22} x_{12} \partial_{31} x_{23} \partial_{32}$ }
\label{firstExample}
\end{center}
\end{figure}

Composing differential operators (concatenating words) corresponds to gluing two graphs at some vertices (that correspond to indices which are common for the two words). The linear order on the edges of the glued graph is induced by the linear orders of the two initial graphs and the condition that the edges of the right monomial (which applied earlier) precede the edges of the left one.

Let us introduce some notations for resulting graphs $G$. Let $V(G)$ be the set of vertices of $G$. Let $E_x(G)$, $E_{\partial} (G)$, $E(G)$ be the sets of \emph{non-degenerate} (connecting different vertices) $x$-, $\partial$- and all edges of $G$, respectively.
Let $L_x(G)$, $L_{\partial}(G)$, $L(G)$ be the sets of \emph{degenerate} edges (loops). Let $E_x^{ab}$, $E_{\partial}^{ab}$, $E^{ab}$ be the sets of (non-degenerate) edges between $a,b \in V(G)$, $a \ne b$. The graph corresponding to a monomial is called $x$-, $\partial$-, or simply \emph{regular} if the initial monomial has the corresponding property.

\emph{Definition.}
We call the graph corresponding to a monomial $x_{\alpha_1 i_1} \dots $ $x_{\alpha_k i_k} \partial_{\alpha_1 i_2} \dots \partial_{\alpha_k i_1}$ with coverage $2 k$ a \emph{complete cycle} of degree $k \ge 1$. We call the graph corresponding to the same monomial but with the weaker condition $2 \le cov \le 2k$ a \emph{cycle} of degree $k \ge 1$. Any cycle can be obtained from a complete cycle by identification of some vertices (but there should be at least two distinct vertices remaining).

\emph{Definition.}
A \emph{complete $x$-cycle} is a graph obtained from a complete cycle by gluing the beginning and the end of each $\partial$-edge; a \emph{complete $\partial$-cycle} is a graph obtained from a complete cycle by gluing the beginning and the end of each $x$-edge.

\emph{Definition.} We call the graph obtained from a complete $x$-cycle by an identification of some (but not all) vertices connected by edges an \emph{$x$-cycle}. Similarly, the graph obtained from a complete $\partial$-cycle by an identification of some (but not all) vertices connected by edges is called a \emph{$\partial$-cycle}.

Examples of these graphs are shown in Figures \ref{CompleteCycle} and \ref{XCycle}.

\begin{figure}
\includegraphics[height=5cm]{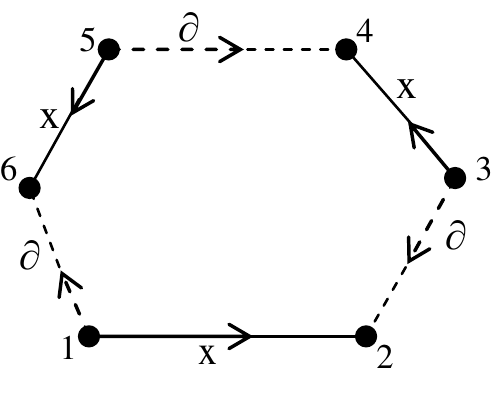}
\includegraphics[height=5cm]{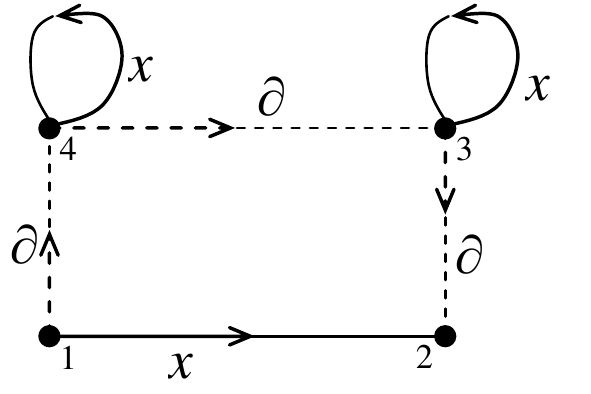}
\caption{The complete cycle corresponding to the monomial $x_{12}  x_{56} x_{34}\partial_{16} \partial_{54}\partial_{32} $ (left) and the cycle corresponding to the monomial $x_{12}  x_{44}x_{33} \partial_{14} \partial_{43}\partial_{32} $ (right).}
\label{CompleteCycle}
\end{figure}

\begin{figure}
\begin{center}
\includegraphics[height=6.5cm]{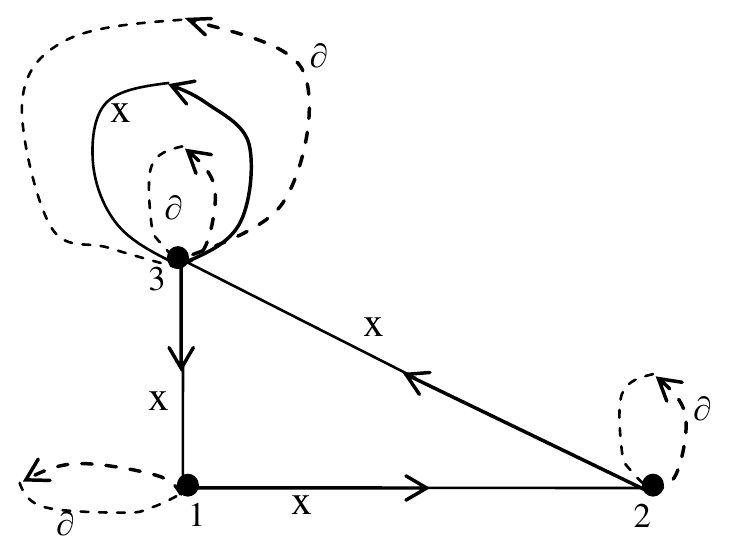}
\caption{The $x$-cycle corresponding to the monomial $x_{12} x_{31} x_{33}x_{23}  \partial_{11}\partial_{33} \partial_{33}\partial_{22}  $}
\label{XCycle}
\end{center}
\end{figure}

Note that if a $\partial$-regular cycle is not an $x$-cycle then
\begin{equation*}
|E_x(G)| > |V(G)|,
\end{equation*}
and for an $x$-cycle we have $|E_x(G)| = |V(G)|$.

It is easy to see that a gluing of a $\partial$-cycle and an $x$-cycle is a regular graph iff closed chains of non-degenerate $\partial$-edges and $x$-edges coincide with each other. Such a gluing of a $\partial$-cycle and an $x$-cycle will be called \emph{regular}.

Denote by $\#$ the operation of gluing graphs, denote by $\#_R$ the regular gluing of a $\partial$-cycle and an $x$-cycle, and denote by $\sqcup$ the "gluing" of graphs with disjoint sets of vertices.

Let us prove the following lemma.
\begin{lemma}
\label{lemma53}
Given a set of cycles $C(1), C(2), \dots, C(n)$, suppose that the graph $G=C(1) \# C(2) \# \dots \# C(n)$ is $\partial$-regular. Then
\begin{equation*}
|V(G)| \le |E_x(G)|.
\end{equation*}
Moreover, the equality holds iff there is a disjoint partition
\begin{equation*}
\{ 1,2, \dots, n \} = \{ i_1, i_2 \} \sqcup \dots \{ i_{2k-1}, i_{2k} \} \sqcup \{ j_1, \dots, j_{n-2k} \},
\end{equation*}
such that $C(i_1), \dots, C(i_{2k-1})$ are $\partial$-cycles,
$C(i_2)$, $\dots$, $C(i_{2k})$, $C(j_1)$, $\dots$, $C(j_{n-2k})$ are $x$-cycles,
and
\begin{equation*}
G = (C(i_1) \#_R C(i_2)) \sqcup \dots (C(i_{2k-1}) \#_R C(i_{2k}) ) \sqcup
C(j_1) \sqcup \dots \sqcup C(j_{n-2k}).
\end{equation*}
\end{lemma}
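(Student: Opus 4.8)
The plan is to prove the inequality by induction on the number $n$ of cycles, adjoining them one at a time \emph{from the left} (i.e. peeling off the leftmost block $C(1)$), and to read off the equality case by tracking when each inductive step is tight. Two preliminary remarks organize everything. First, since gluing only identifies vertices carrying equal indices and never merges two letters, a non-degenerate $x$-edge of some $C(i)$ stays non-degenerate in $G$; hence $E_x(G)$ is the disjoint union of the $E_x(C(i))$, so $|E_x(G)|=\sum_{i=1}^n|E_x(C(i))|$, while $|V(G)|=\bigl|\bigcup_i V(C(i))\bigr|\le\sum_i|V(C(i))|$. Second, a suffix $C(j)\#\cdots\#C(n)$ of a $\partial$-regular monomial is again $\partial$-regular, since for a factor $\partial_{ab}$ inside the suffix all letters to its right already lie in the suffix. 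In particular $C(n)$, read alone, is $\partial$-regular; but inside a single cycle all $x$-letters precede all $\partial$-letters, so a non-degenerate $\partial_{ab}$ would have no $x_{ab}$ to its right. Thus $C(n)$ has no non-degenerate $\partial$-edge, i.e. $C(n)$ is an $x$-cycle and $|E_x(C(n))|=|V(C(n))|$; this is the base of the induction.

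For the inductive step write $G=C(1)\#G'$ with $G'=C(2)\#\cdots\#C(n)$, which is $\partial$-regular. Setting $\nu_1:=|V(C(1))\setminus V(G')|$ for the number of genuinely new vertices, the first remark gives
\[
|E_x(G)|-|V(G)| = \bigl(|E_x(G')|-|V(G')|\bigr) + \bigl(|E_x(C(1))| - \nu_1\bigr).
\]
The first bracket is $\ge 0$ by induction, so it suffices to prove $\nu_1\le|E_x(C(1))|$. Let $U\subseteq V(C(1))$ be the set of vertices incident to no non-degenerate $\partial$-edge of $C(1)$. If $\partial_{ab}$ (with $a\ne b$) is a non-degenerate $\partial$-edge of $C(1)$, then $\partial$-regularity of $G$ forces some $x_{ab}$ to its right; as that letter cannot lie in $C(1)$ it lies in $G'$, whence $a,b\in V(G')$. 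Therefore every new vertex lies in $U$, i.e. $\nu_1\le|U|$, and it remains to show $|U|\le|E_x(C(1))|$.

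Here is the crux. A complete cycle of degree $k$ is the $2k$-cycle on $w_0,\dots,w_{2k-1}$ whose $x$- and $\partial$-edges form two perfect matchings $M_x,M_\partial$ with $M_x\cup M_\partial$ equal to the whole cycle, and $C(1)$ is a vertex-quotient of such a cycle. A class $v\in U$ is exactly a class whose preimage is a union of $M_\partial$-pairs; counting $x$-endpoints in $v$ shows that the number of non-degenerate $x$-edges meeting $v$ is even, and it cannot be zero: if it were, the preimage of $v$ would also be $M_x$-closed, hence closed under both matchings, hence (by connectedness of the $2k$-cycle) all of $\{w_0,\dots,w_{2k-1}\}$, contradicting $cov(C(1))\ge2$. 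So each $v\in U$ meets at least two non-degenerate $x$-edges; summing over $v\in U$ produces at least $2|U|$ non-degenerate $x$-edge--class incidences, while each non-degenerate $x$-edge provides at most two such incidences. Hence $|U|\le|E_x(C(1))|$, which closes the induction and proves $|V(G)|\le|E_x(G)|$.

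For the equality case one backtracks: $|V(G)|=|E_x(G)|$ forces every inductive increment to vanish, i.e. $\nu_j=|U_j|=|E_x(C(j))|$ at each stage. Chasing equality in the last counting step shows that no non-degenerate $x$-edge of $C(j)$ meets $V(C(j))\setminus U_j$ and no non-degenerate $\partial$-edge meets $U_j$; since $C(j)$ is connected, one of $U_j$, $V(C(j))\setminus U_j$ must be empty, so each $C(j)$ is either an $x$-cycle or a $\partial$-cycle. An $x$-cycle gives equality only if all its vertices are new, i.e. it is disjoint from everything to its right; a $\partial$-cycle has $|E_x(C(j))|=0$, so $\nu_j=0$ means all its vertices are old, and by the $\partial$-regularity matching each of its non-degenerate $\partial$-edges is supplied by an $x$-edge further right coming (after checking that the closed chains coincide) from a single regularly glued $x$-cycle. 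Assembling these constraints yields the asserted decomposition $G=(C(i_1)\#_R C(i_2))\sqcup\cdots\sqcup(C(i_{2k-1})\#_R C(i_{2k}))\sqcup C(j_1)\sqcup\cdots\sqcup C(j_{n-2k})$ into regularly glued $\partial$-/$x$-cycle pairs and isolated $x$-cycles on disjoint vertex sets, the converse being immediate. I expect the main obstacle to be exactly this crux estimate $|U|\le|E_x(C(1))|$ via the two matchings and connectedness, together with the delicate bookkeeping in the equality analysis that recovers the precise pairing of $\partial$-cycles with $x$-cycles once every increment is forced to be tight.
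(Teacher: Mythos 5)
Your induction --- peeling off $C(1)$ from the left and using that $\partial$-regularity forces both endpoints of every non-degenerate $\partial$-edge of $C(1)$ to lie in $C(2)\#\dots\#C(n)$ --- is the same skeleton as the paper's proof, and your single estimate $\nu_1\le |U|\le |E_x(C(1))|$ via the two matchings $M_x,M_\partial$ together with the parity and connectedness count is correct; it neatly replaces the paper's three-case analysis of the inductive step (disjoint union, full gluing, partial gluing via ``branches'') for the inequality part.

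The base case, however, is wrong as stated, and the error resurfaces in the equality analysis. ``$C(n)$ has no non-degenerate $\partial$-edge'' gives only that $C(n)$ is a quotient of a complete $x$-cycle; it does not make $C(n)$ an $x$-cycle in the paper's sense (a quotient in which one identifies only vertices joined by an edge), and it does not give $|E_x(C(n))|=|V(C(n))|$. For instance the degree-$4$ cycle coded by $x_{12}x_{21}x_{12}x_{21}\,\partial_{11}\partial_{22}\partial_{11}\partial_{22}$ is $\partial$-regular on its own and has only degenerate $\partial$-edges, yet $|V|=2<4=|E_x|$. The correct base case is the inequality $|V(C(n))|\le|E_x(C(n))|$ --- which your own incidence count delivers upon taking $U=V(C(n))$ --- with equality iff, in addition, every vertex meets exactly two non-degenerate $x$-edge endpoints, i.e.\ the non-degenerate $x$-edges form one closed chain through all the vertices; that is precisely what characterizes an $x$-cycle and what makes $|V|=|E_x|$ hold. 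The same omission occurs when you backtrack: from equality in $2|U_j|\le 2|E_x(C(j))|$ you record only that every non-degenerate $x$-edge has both endpoints in $U_j$, but not the complementary condition that each $v\in U_j$ receives exactly two incidences. Without it, ``$U_j=V(C(j))$'' does not yield ``$C(j)$ is an $x$-cycle'', so blocks like the example above would be wrongly admitted into the equality case, and the claimed decomposition into regular gluings and isolated $x$-cycles is not established. Both defects are repairable with data your counting already produces, but as written the ``equality iff'' half of the lemma --- the half the argument actually needs in order to isolate the Wick pairings --- has a genuine gap.
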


\emph{Proof of Lemma \ref{lemma53}}.
The $\partial$-regularity of $G$ implies that $C(n)$, $C(n-1) \# C(n)$, $C(n-2) \# C(n-1) \# C(n)$ etc. are $\partial$-regular.
Let us prove the lemma by induction on $n$. The $\partial$-regularity of $C(n)$ implies that $C(n)$ can be obtained by an identification of vertices of a complete $x$-cycle; hence $|V(C(n))| \le |E_x(C(n))|$, and the equality holds iff $C(n)$ is an $x$-cycle.

Assume that the lemma holds for the graph
\begin{equation*}
G_2 := C(2) \# \dots \# C(n)
\end{equation*}
and let us prove it for the graph
\begin{equation*}
G_1 := C(1) \# C(2) \# \dots \# C(n).
\end{equation*}
First, let us prove that $|V(G_1)| \le |E_x(G_1)|$. Consider three cases:

1) $G_1=C(1) \sqcup G_2$, that is, no vertices are glued.

2) $V(G_1) = V(G_2)$, that is, each vertex of $C(1)$ is glued to a vertex of $G_2$.

3) Some vertices of $C(1)$ are glued to vertices of $G_2$, and some vertices of $C(1)$ are not glued to vertices of $G_2$.

In the first case we have
\begin{equation*}
|V(G_1)| - |E_x(G_1)| = |V(G_2)| - |E_x(G_2)| + |V(C_1)| - |E_x(C_1)|.
\end{equation*}
Further, in this case the $\partial$-regularity of $G_1$ implies the $\partial$-regularity of $C(1)$. The induction hypothesis for $G_2$ and the base of induction for $C(1)$ imply the inequality $|V(G_1)| \le |E_x(G_1)|$.

In the second case we have $|V(G_1)| = |V(G_2)|$ and $E_x(G_1) \ge E_x(G_2)$. Therefore, the desired inequality $|V(G_1)| \le |E_x(G_2)|$ follows from the induction hypothesis.

Consider the case (3). It follows from the $\partial$-regularity of $G_1$ that $G_1$ has no non-degenerate $\partial$-edges with the beginning or the end outside of $G_2$. Therefore, new vertices attach to $G_2$ via branches consisting of degenerate and non-degenerate $x$-edges and degenerate $\partial$-edges; the beginning and the end of such a branch lie in $G_2$ and all intermediate vertices lie outside $G_2$. It follows that $G_1$ can be obtained by an identification of some vertices of the graph shown in Figure \ref{FigureProof} (there may be several branches).

\begin{figure}
\center{\includegraphics[height=7cm]{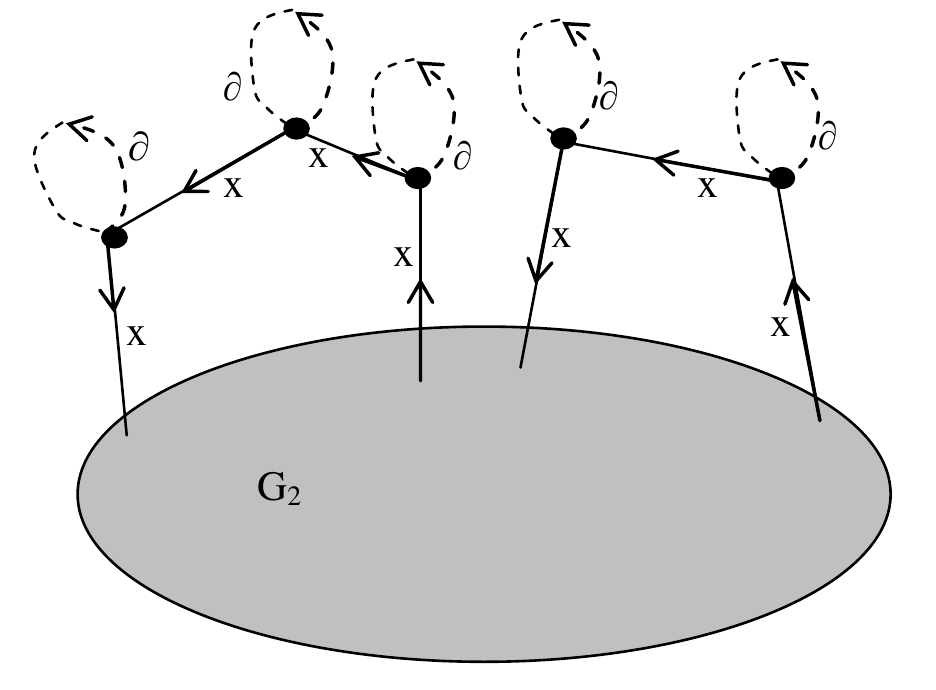}}
\caption{The graph $G_1$ built from the graph $G_2$ with two branches.}
\label{FigureProof}
\end{figure}

The number of vertices of this graph outside $G_2$ is strictly less than the number of non-degenerate $x$-edges with ends in these vertices. It is easy to see that after identifying some of these vertices the number of disappeared vertices is not less than the number of disappeared non-degenerate $x$-edges. Therefore, under the transition from $G_2$ to $G_1$ the number of new vertices is less than the number of new non-degenerate $x$-edges. Hence, by induction hypothesis, we have $|V(G)| < |E_x(G)|$.

We have shown that in all three cases we have $|V(G_1)| \le |E_x(G_1)|$; moreover, in the third case the inequality is strict. Besides, we have shown that
\begin{equation}
\label{ind}
|V(G_1)| - |E_x(G_1)| \le |V(G_2)| - |E_x(G_2)|.
\end{equation}

Assume now that $|V(G_1)| = |E_x(G_1)|$. It follows from \eqref{ind} that $|V(G_2)| = |E_x(G_2)|$ and, by the induction hypothesis, $G_2$ has the desired form.

Consider the same three cases. Due to the strict inequality the case (3) is impossible. In the case (1) the base and the induction hypothesis imply the desired form of $G_1$.

Let us consider the case (2). We have $|V(G_1)|=|V(G_2)|$; therefore, $E_x(C(1)) = \varnothing$. Thus, $C(1)$ is obtained by an identification of vertices in a complete $\partial$-cycle. It follows from the $\partial$-regularity that
\begin{equation*}
|E_{\partial}^{ab} (C(1))| \le |E_x^{ab}(G_2)| - |E_{\partial}^{ab}(G_2)|, \  \ \mbox{for all $a,b \in V(G_2)$, $a \ne b$.}
\end{equation*}

From the structure of $G_2$ (the induction hypothesis) it follows that all elements of $E_{\partial}(C(1))$ must cover non-degenerate $x$-edges of an $x$-cycle from $G_2$. This completes the proof of Lemma \ref{lemma53}.
$\square$

Now we conclude the proof of Lemma \ref{lemma52}. Consider the monomials $C(1)$, $\dots$, $C(n)$ and the corresponding graphs. In terms of a graph, the capacity of a monomial $M$ equals
\begin{equation*}
|L_{\partial}| = deg - |E_{\partial}|.
\end{equation*}

For a regular graph we have $|E_x|=|E_{\partial}|$; therefore, the capacity equals $deg(M)-|E_x|$.
On the other hand, we have $cov(M) = |V|$. Thus, the desired inequality can be written as
\begin{equation*}
cap(M) \le deg(M) - cov(M) \mbox{ is equivalent to } |V| \le |E_x|.
\end{equation*}
This inequality follows from Lemma \ref{lemma53}.

If $cap(M)=deg(M)-cov(M)$ then $|V|=|E_x|$ and we obtain the partition from the statement of Lemma \ref{lemma53}. But for a regular graph the set of indices $\{ j_1, \dots, j_{n-2k} \}$ of single $x$-cycles must be empty (in Lemma \ref{lemma53} we require only $\partial$-regularity). Therefore, we obtain the desired partition of supports into coincident disjoint pairs.

\end{proof}

\subsection{Proof of Lemma \ref{41}}
\label{proof41}
We shall use the notations introduced in Section \ref{51section}.

Let us represent $\nu_i$, $i=1,2$, as a sum of diagonal and off-diagonal terms (see Section \ref{51section}). It is easy to see that
\begin{equation*}
\langle \nu_1 \nu_2 \rangle = \langle \nu_1^{diag} \nu_2^{diag} \rangle + \langle \nu_1^{off-diag} \nu_2^{off-diag} \rangle.
\end{equation*}
The first term can give a nonzero contribution only if the indices in the sums for $\nu_1^{diag}$ and $\nu_2^{diag}$ coincide.

Note that
\begin{equation*}
( x^k \partial^k - (\gamma L)^k) (x^l \partial^l - (\gamma L)^l) \exp( \gamma L (x-1)) |_{x=1}
= k l \gamma^{k+l-1} L^{k+l-1} + o( L^{k+l-1}).
\end{equation*}

The choice of one vertex (the common index in the sums for $\nu_1^{diag}$ and $\nu_2^{diag}$) yields a factor of $|I_1 \cap I_2|$; therefore, the contribution of the first term to $L^{k+l}$ equals
\begin{equation}
\label{di1}
k l \frac{|I_1 \cap I_2|}{L} \gamma^{k+l-1}.
\end{equation}

Consider the second term. The product of monomials from $\nu_1^{off-diag}$ and
$\nu_2^{off-diag}$ gives a nonzero contribution only if the first monomial contains only non-degenerate $\partial$-edges, the second monomial contains only non-degenerate $x$-edges, and the chains of non-degenerate edges coincide. These graphs contain $n \ge 2$ vertices; therefore, the class of isomorphic graphs can contribute to $L^{k+l}$ only if the monomials are a $\partial$-cycle and an $x$-cycle, respectively.

Let us fix an ordered set of $n$ indices. Simple combinatorial computations show that there exist $\binom{k}{n}$ non-isomorphic monomials whose graphs satisfy the following conditions:

1) They are $\partial$-cycles with $n$ non-degenerate edges.

2) They pass through the fixed vertices in a fixed order.

3) They can be obtained after an identification of vertices from a complete $\partial$-cycle of length $k \ge n$.

Similarly, there are $\binom{l}{n}$ non-isomorphic $x$-cycles satisfying similar conditions. Besides, the first vertex of the $x$-cycle can be arbitrary (but the order of visiting other vertices is fixed); this condition contributes a factor of $n$. Finally, the choice of an ordered set of $n$ vertices contributes a factor of $|I_1 \cap I_2|^n + o(L^n)$. Therefore, the contribution of the off-diagonal terms to $L^{k+l}$ equals
\begin{equation}
\label{ge2}
\sum_{n=2}^{\min(k,l)} n \binom{k}{n} \binom{l}{n} \left( \frac{|I_1 \cap I_2|}{L} \right)^n \gamma^{k+l-n},
\end{equation}
where the factor $\gamma^{k+l-n}$ appears because we have $k+l-n$ diagonal $\partial$-operators (represented by $\partial$-loops in a graph).

Combining \eqref{di1} and \eqref{ge2} we obtain the statement of the lemma.

\subsection{ Proof of Proposition \ref{51main}}
\label{53section}

Let $\rho = (\rho_1 \ge \rho_2 \ge \dots \ge \rho_{l(\rho)})$ be a partition. Elements $p_{\rho,I}^{\#}$ correspond to the following operators in the algebra $\mathbb C[x_{ij}, \partial_{ij}]$, $i,j \in \mathcal I$:

\begin{equation*}
D_{\mathcal I} (p_{\rho,I}^{\#}) = \sum_{i_1, \dots, i_{|\rho|} \in I; \alpha_1, \dots \alpha_{|\rho|} \in \mathcal I}
x_{ \alpha_1 i_1} \dots x_{\alpha_{|\rho|} i_{|\rho|} } \partial_{\alpha_1 i_{\sigma(1)}} \dots \partial_{\alpha_{|\rho|}
i_{\sigma(|\rho|)}},
\end{equation*}
where $\sigma \in S(|\rho|)$ is an arbitrary permutation with the cycle structure $\rho$ (see Section \ref{enveloping}).

Note that
\begin{equation*}
\langle D_{\mathcal I} (p_{\rho,I}^{\#}) \rangle = \gamma^{|\rho|} |I|^{l(\rho)} L^{|\rho|} =
\gamma^{|\rho|} \left( \frac{|I|}{L} \right)^{l(\rho)} L^{|\rho| + l(\rho)}.
\end{equation*}

By $wt(\rho) := |\rho|+l(\rho)$ we denote the \emph{weight} of a partition $\rho$. For a permutation $\sigma$ with the cycle structure $\rho$ we also define the \emph{weight} $wt(\sigma) := |\rho|+l(\rho)$. If $|\mathcal I| \sim const \cdot L$ then $\langle D_{\mathcal I} (p_{\rho,I}^{\#}) \rangle$ is proportional to $L^{wt(\rho)}$ as $L \to \infty$.

Recall that the elements $p_{\rho,I}^{\#}$ form a linear basis in $Z (\mathfrak{gl} (I))$. Therefore, for the proof of Proposition \ref{51main} it suffices to prove the asymptotic normality only for these elements.

Let $\rho^{(1)}, \dots, \rho^{(r)}$ be arbitrary partitions.
Let
\begin{equation*}
\mu_j := D_{\mathcal I} \left( p_{\rho^{(j)}, I_j}^{\#} \right) - \left\langle D_{\mathcal I} \left( p_{\rho^{(j)}, I_j}^{\#} \right) \right\rangle, \qquad  1 \le j \le r,
\end{equation*}
and let
\begin{equation*}
C_{\mu} (i,j) = \lim_{L \to \infty} \frac{\langle \mu_i \mu_j \rangle}{L^{wt(\rho^{(i)}) + wt(\rho^{(j)})-2}}
\end{equation*}
be the asymptotic covariance of these elements (the existence of the limit follows from the arguments below).

\begin{proposition}
\label{53}
For any $r \ge 1$ we have
\begin{multline*}
\lim_{L \to \infty} \frac{\langle \mu_1 \mu_2 \cdots \mu_r \rangle}{L^{wt(\rho^{(1)}) +wt(\rho^{(2)}) + \dots +wt(\rho^{(r)}) - r }} \\ =
\begin{cases}
0, \qquad & \mbox{ $r$ is odd}, \\
\sum_{\sigma \in \mathcal{PM} (r)} \prod_{i=1}^{r/2} C_{\mu}(\sigma(2i-1),\sigma(2i)), \qquad & \mbox{ $r$ is even}.
\end{cases}
\end{multline*}
\end{proposition}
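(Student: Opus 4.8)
The plan is to repeat the combinatorial analysis of Proposition \ref{52} essentially verbatim, now allowing each factor $\mu_j$ to be a product of several cycles rather than a single one. Recall from Section \ref{enveloping} that $p_{\rho,I}^{\#}=\tr(E_I^{k_1})\cdots\tr(E_I^{k_{l(\rho)}})$, so after using \eqref{dif-vid} the operator $D_{\mathcal I}(p_{\rho^{(j)},I_j}^{\#})$ is, upon opening the index sums, a sum of monomials each of which is a product of $l(\rho^{(j)})$ cycles in the sense of Section \ref{51section}. As in \eqref{diag}, I would split each such cycle into its diagonal part (coverage $1$, of the form $\sum_{i\in I_j}x_{ii}^{k}\partial_{ii}^{k}$) and its off-diagonal part (coverage $\ge 2$), and arrange the centering so that the subtracted constant $\langle D_{\mathcal I}(p_{\rho^{(j)},I_j}^{\#})\rangle$ cancels the all-diagonal leading contribution of $\mu_j$. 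Expanding the product $\mu_1\cdots\mu_r$ then produces a sum over configurations that assign to each of the $n=\sum_j l(\rho^{(j)})$ cycles the label diagonal or off-diagonal.

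The heart of the argument is the power-of-$L$ bookkeeping. A diagonal cycle of degree $k$ contributes a factor of order $|I_j|\gamma^{k}L^{k}\sim L^{k+1}$ through its single index sum and its capacity, while a regular collection of off-diagonal cycles is governed by Lemma \ref{lemma52} (equivalently Lemma \ref{lemma53}): its state is $O(L^{cov+cap})$ with $cov+cap\le deg$, with equality exactly when the off-diagonal cycles split into disjoint pairs of equal support. Since $\sum_j wt(\rho^{(j)})-r=\big(\sum_j|\rho^{(j)}|\big)+n-r$, matching this target power forces (a) every off-diagonal cycle to lie in a disjoint equal-support pair, and (b) each such pair to join two distinct factors $\mu_a,\mu_b$, all remaining cycles being diagonal. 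A single cross-pair between $\mu_a$ and $\mu_b$ contributes, exactly as in Lemma \ref{41}, a factor of order $L^{k+l}$, while the remaining $l(\rho^{(a)})+l(\rho^{(b)})-2$ diagonal cycles supply the complementary powers, so each matched pair yields precisely $C_{\mu}(a,b)\,L^{wt(\rho^{(a)})+wt(\rho^{(b)})-2}$. Multiplying over a perfect matching gives $\prod C_{\mu}\cdot L^{\sum_j wt(\rho^{(j)})-r}$, and summing over $\sigma\in\mathcal{PM}(r)$ reproduces the right-hand side; when $r$ is odd no perfect matching exists and the limit is $0$.

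The remaining work is to show that every other configuration is $O(L^{\sum_j wt(\rho^{(j)})-r-1})$. Two mechanisms handle this, both imported from the proof of Proposition \ref{52}. First, any off-diagonal configuration that is not a disjoint union of equal-support pairs — in particular one in which a connected group of paired off-diagonal cycles ties together three or more of the $\mu_j$'s — loses a power of $L$ by the strict inequality in Lemma \ref{lemma53}; a second off-diagonal cross-pair between the same two factors costs $L^{2}$ and is likewise negligible. Second, configurations in which a diagonal cycle shares an index with an off-diagonal cycle (or with a diagonal cycle of another factor) are treated by moving the diagonal factors $x_{ii}^{k}\partial_{ii}^{k}$ through the rest of the word via the commutation relation \eqref{commut}: the term with no annihilations reproduces $\gamma^{k}L^{k}$ times the word with that cycle deleted and cancels against the corresponding piece of the centering, while every term with at least one annihilation drops a power of $L$, exactly as in the treatment of \eqref{intersect} in Proposition \ref{52}.

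I expect the main obstacle to be precisely this last bookkeeping: with several cycles per factor $\mu_j$, one must verify that the ``extra'' diagonal cycles contribute exactly the advertised powers $L^{k+1}$ and that overlaps among the supports of diagonal and off-diagonal cycles never restore a lost power of $L$. Controlling this requires combining the equality case of Lemma \ref{lemma53} with the cancellation-by-commutation argument uniformly over all configurations; once this is in place, the existence of the limiting covariances $C_{\mu}(i,j)$ and the Wick factorization follow as in Proposition \ref{52}, and Proposition \ref{51main} is then obtained since $\{p_{\rho,I}^{\#}\}$ is a linear basis of $Z(\mathfrak{gl}(I))$.
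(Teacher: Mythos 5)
Your overall strategy is the paper's: split each $\mu_j$ into a fully diagonal part carrying the centering constant and an off-diagonal remainder, expand the product, do the power-of-$L$ bookkeeping, and handle diagonal/off-diagonal overlaps by the commutation--cancellation trick from the proof of Proposition \ref{52}; the identification of the leading configurations (disjoint equal-support pairs joining distinct factors) and the resulting Wick structure are also as in the paper. However, there is a genuine gap exactly at the point you flag as ``the main obstacle'', and it is not closed by the tools you invoke. You propose to control the off-diagonal part by Lemma \ref{lemma52} (equivalently Lemma \ref{lemma53}). Those lemmas apply to products of cycles each of coverage at least $2$; but a monomial of $D_{\mathcal I}(p_{\rho^{(j)},I_j}^{\#})$ with $l(\rho^{(j)})\ge 2$ is a product of $l(\rho^{(j)})$ cycles of which some may degenerate to one-vertex components (all letters diagonal) while at least one is genuinely off-diagonal. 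For such a graph the inequality $|V(G)|\le|E_x(G)|$ of Lemma \ref{lemma53} is simply false (a one-vertex component has $|V|=1$ and $|E_x|=0$), so the bound $cap(M)\le deg(M)-cov(M)$ fails and the estimate you rely on breaks down.

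Your fallback mechanisms do not repair this. A one-vertex cycle sitting inside an off-diagonal monomial carries no centering constant of its own (the constant $(\gamma L)^{|\rho^{(j)}|}$ is subtracted only from the all-diagonal term of $\mu_j$), so when you commute its $x_{ii}$'s and $\partial_{ii}$'s through the word, the no-annihilation term has nothing to cancel against; and if its vertex coincides with a vertex already present in the off-diagonal part, it contributes no fresh factor of $L$ from the index sum, so the advertised $L^{k+1}$ per diagonal cycle is not available either. What is actually needed --- and what the paper supplies in Lemmas \ref{lemma54}--\ref{lemma56} --- is a generalization of the graph lemma to multi-fold cycles: for a $\partial$-regular gluing of $l_i$-fold cycles one has $|V(G)|\le|E_x(G)|+\sum_i(l_i-1)$, together with a characterization of the equality case showing that the $l_i-1$ one-vertex components must be disjoint from everything else and the nontrivial components must pair up regularly across distinct factors. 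It is this corrected inequality, not Lemma \ref{lemma53}, that matches the normalization $L^{\sum_j wt(\rho^{(j)})-r}$ (note $wt(\rho^{(j)})-1=|\rho^{(j)}|+(l(\rho^{(j)})-1)$, so the slack $\sum_i(l_i-1)$ is exactly accounted for) and forces the Wick pairing. Without proving this extension your argument establishes Proposition \ref{53} only when $l(\rho^{(j)})=1$ for all $j$, i.e.\ it reproves Proposition \ref{52}.
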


Proposition \ref{51main} directly follows from this statement. In the case $l(\rho^{(i)}) = 1$, $1 \le i \le r$, Proposition \ref{53} coincides with already proved Proposition \ref{52}.

The proof of Proposition \ref{53} follows the same lines as the proof of Proposition \ref{52}. Let us formulate two lemmas which generalize Lemmas \ref{lemma51} and \ref{lemma52}.

\begin{lemma}
\label{lemma54}
Consider an operator of the form \footnote{In the expression \eqref{difOpFirstType} and below the notations $k^{a}_b$,$i_{a}^b$ and $\rho_a^b$ are indices while $x^k$ and $\partial^k$ stand for $x$ and $\partial$ raised to the power $k$.}
\begin{multline}
\label{difOpFirstType}
D = \left( x_{i_1^1 i_1^1}^{k_1^1} \dots x_{i_{l_1}^1 i_{l_1}^1}^{k_{l_1}^1} \partial_{i_1^1 i_1^1}^{k_1^1} \dots
\partial_{i_{l_1}^1 i_{l_1}^1}^{k_{l_1}^1} -(\gamma L)^{k_1^1+ \dots + k_{l_1}^1} \right) \cdots \\ \times
\left( x_{i_1^s i_1^s}^{k_1^s} \dots x_{i_{l_s}^1 i_{l_s}^1}^{k_{l_s}^1} \partial_{i_1^s i_1^s}^{k_1^s} \dots
\partial_{i_{l_s}^1 i_{l_s}^s}^{k_{l_s}^s} - (\gamma L)^{k_1^1+ \dots + k_{l_s}^1} \right) .
\end{multline}
With the notation
\begin{equation*}
cov(D) = \left| \{ i_k^j , 1 \le j \le s, 1 \le k \le l_j \} \right|
\end{equation*}
we have
\begin{equation*}
\langle D \rangle = O \left( L^{\sum_{j=1}^s (wt(k^j) -1) - cov(D)} \right).
\end{equation*}
Moreover, we have
\begin{equation*}
\langle D \rangle = O \left( L^{\sum_{j=1}^s (wt(k^j) -1) - cov(D) -1} \right),
\end{equation*}
if the set of supports $\{ \{i_1^j, \dots, i_{l_j}^j \} , 1 \le j \le s \} $ cannot be divided into disjoint pairs such that supports from each pair have non-zero intersection.
\end{lemma}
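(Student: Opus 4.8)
The plan is to exploit that $D$ involves only the pairwise commuting diagonal variables $\{x_{ii},\partial_{ii}\}_{i\in\mathcal I}$, where $\mathcal I$ is the set of all indices occurring in $D$, so that $|\mathcal I|=cov(D)$ and the state factorizes over the individual indices. Write the $j$-th factor as $F_j=P_j-c_j$ with $P_j=\prod_{a=1}^{l_j}x_{i_a^j i_a^j}^{k_a^j}\partial_{i_a^j i_a^j}^{k_a^j}$ and $c_j=(\gamma L)^{|k^j|}$, $|k^j|:=k_1^j+\dots+k_{l_j}^j$, and expand $D=\prod_{j=1}^s(P_j-c_j)$ over the $2^s$ choices, obtaining $\langle D\rangle=\sum_{S\subseteq\{1,\dots,s\}}(-1)^{s-|S|}\left(\prod_{j\notin S}c_j\right)\left\langle\prod_{j\in S}P_j\right\rangle$. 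For fixed $S$ the state $\langle\prod_{j\in S}P_j\rangle$ splits as a product over $i\in\mathcal I$ of one-variable quantities $x^{m_1}\partial^{m_1}\cdots x^{m_p}\partial^{m_p}\exp(\gamma L(x-1))|_{x=1}$, where $m_1,\dots,m_p$ are the powers at $i$ produced by the factors of $S$ in their induced order. Each such one-variable state equals $(\gamma L)^{m_1+\dots+m_p}(1+\delta_i^{(S)})$, where $\delta_i^{(S)}=O(L^{-1})$ when $i$ is touched by at least two factors of $S$ and $\delta_i^{(S)}=0$ otherwise, since a single block $x^m\partial^m$ acts on the exponential without correction. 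Multiplying the leading per-index terms reproduces $\prod_{j\in S}c_j$ exactly, so $\langle D\rangle=\left(\prod_{j=1}^s c_j\right)\sum_S(-1)^{s-|S|}\prod_{i\in\mathcal I}(1+\delta_i^{(S)})$.

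I would then extract the surviving order. Expanding the inner product and using $\sum_S(-1)^{s-|S|}=0$, only the contributions attached to a nonempty $T\subseteq\mathcal I$ through $\prod_{i\in T}\delta_i^{(S)}$ remain. Because $\delta_i^{(S)}$ depends only on the factors of $S$ whose support contains $i$, any factor whose support misses $T$ may be freely toggled in or out of $S$, and summing its sign gives $1-1=0$. Hence only \emph{transversals} $T$ --- those meeting $supp(F_j)$ for every $j$ --- contribute, each at order $L^{-|T|}$. The needed combinatorial input is a lower bound on transversal size: denoting by $\deg(i)$ the number of factors whose support contains $i$, the total incidence count is $\sum_j l_j=\sum_{i\in\mathcal I}\deg(i)$; as $T$ is a transversal, $\sum_{i\in T}\deg(i)=\sum_j|supp(F_j)\cap T|\ge s$, while $\sum_{i\notin T}\deg(i)\ge cov(D)-|T|$ since every index has degree at least one. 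Therefore $\sum_j l_j\ge s+cov(D)-|T|$, i.e. $|T|\ge s+cov(D)-\sum_j l_j$, whence $\langle D\rangle=O\left(L^{\sum_j|k^j|-(s+cov(D)-\sum_j l_j)}\right)=O\left(L^{\sum_j(wt(k^j)-1)-cov(D)}\right)$, which is the first assertion.

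For the sharper bound I would determine exactly when the top order $L^{-(s+cov(D)-\sum_j l_j)}$ is attained. Equality in the edge count forces the \emph{tight} configuration in which each factor meets $T$ in a single index and each index outside $T$ lies in a unique support. Grouping the factors by their common $T$-index $t$, the degree-one ``private'' indices factor out exactly, and the contribution of the group attached to $t$ reduces precisely to the one-variable expression $\langle\prod_{j:\,t\in supp(F_j)}(x_{tt}^{\kappa_j}\partial_{tt}^{\kappa_j}-(\gamma L)^{\kappa_j})\rangle$ already estimated in Lemma \ref{lemma51}: it is $O(L^{-1})$ when $t$ belongs to exactly two factors and $O(L^{-2})$ when it belongs to three or more. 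Thus the top order is reached only if every $T$-index is shared by exactly two factors, i.e. only if the factors split into pairs whose supports pairwise intersect and are otherwise disjoint. If the set of supports cannot be divided into such disjoint intersecting pairs, then no transversal realizes the top order and every contribution loses at least one further power of $L$, giving $\langle D\rangle=O\left(L^{\sum_j(wt(k^j)-1)-cov(D)-1}\right)$.

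I expect the last step to be the main obstacle: showing that the top-order contributions are governed precisely by the disjoint-intersecting-pairing condition. This demands the sharp edge-counting characterization of the tight configuration together with the clean reduction of each index-group to the single-variable estimate of Lemma \ref{lemma51}, and the delicate point is to verify that ``tight configuration with every $T$-index shared by two factors'' matches exactly the pairing of supports, rather than merely bounding the order from above.
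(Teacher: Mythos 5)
Your argument is correct, and it organizes the proof differently from the paper. The paper first observes that the state factorizes over connected components of the ``intersection graph'' of the supports, so one may assume the factors cannot be split into two groups with disjoint supports; in that case $cov(D)\le \sum_{j}(l_j-1)+1$, which converts the target exponent into $\sum_j|k^j|-1$ (resp.\ $\sum_j|k^j|-2$ for $s\ge 3$), and these bounds are then obtained by the same commutator--annihilation count as in Lemma \ref{lemma51}: open the parentheses in one factor, push the $\partial$'s to the right, note that the no-annihilation term cancels against $-(\gamma L)^{|k^j|}$, and that each annihilation costs a power of $L$. You instead keep $D$ whole, expand the product of $(P_j-c_j)$ by inclusion--exclusion over subsets $S$, factorize each state over the indices, and control the corrections $\delta_i^{(S)}$ by a transversal/double-counting bound $|T|\ge s+cov(D)-\sum_j l_j$; the sign cancellation over factors missing $T$ replaces the paper's reduction to connected components, and only the sharp ``moreover'' part needs the single-variable estimate \eqref{lemma3'} of Lemma \ref{lemma51} (for $q=2$ versus $q\ge 3$), which your per-group resummation $\sum_{S_t}(-1)^{|G_t|-|S_t|}\delta_t^{(S_t)}$ reproduces exactly. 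Your route is longer but more explicit where the paper is terse (``completely analogous''), and your characterization of the tight configuration does match the disjoint-intersecting-pairs condition: tightness forces each factor to meet $T$ in exactly one index and each off-$T$ index to have degree one, so a nonzero top-order group contribution occurs only when every group has size two, which is precisely a perfect matching of the supports into intersecting pairs. One small imprecision: since the indices $i_1^j,\dots,i_{l_j}^j$ within a single factor may coincide (they do in the application to $\mu_j^{diag}$), the incidence identity should read $\sum_{i}\deg(i)=\sum_j|supp(F_j)|\le\sum_j l_j$ rather than an equality; the inequality points the right way, so your bound $|T|\ge s+cov(D)-\sum_j l_j$ and the rest of the argument are unaffected.
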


\begin{lemma}
\label{lemma55}
Let $C(1), \dots, C(m)$ be monomials of the form
\begin{multline*}
C(l) = x_{\alpha_1^l i_1^l} \dots x_{\alpha_{k_l}^l i_{k_l}^l} \partial_{\alpha_1^l i_{\sigma_l(1)}^l }
\dots \partial_{\alpha_{k_l}^l i_{\sigma_l(k_l)}^l }, \quad
\sigma_l \in S(l), \  1 \le l \le m; \ k_1, \dots, k_m \ge 1;
\end{multline*}
with the condition that in $C(l)$ there is at least one letter $x_{ab}$ or $\partial_{ab}$ such that $a \ne b$.
Assume that the monomial $M := C(1) \cdots C(m)$ is regular. Then
\begin{equation*}
cap(M) \le \sum_{j=1}^m (wt(\sigma_j) -1) -cov(M).
\end{equation*}
\end{lemma}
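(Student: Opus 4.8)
The plan is to recast the statement in the graph language used in the proof of Lemma~\ref{lemma52} and to reduce it to Lemma~\ref{lemma53}. Let $G$ be the graph attached to $M=C(1)\cdots C(m)$. Since $M$ is regular we have $|E_x(G)|=|E_\partial(G)|$, whence $cap(M)=deg(M)-|E_x(G)|$ and $cov(M)=|V(G)|$. Writing $l(\sigma_j)$ for the number of cycles of $\sigma_j$, so that $wt(\sigma_j)=k_j+l(\sigma_j)$ and $deg(M)=\sum_j k_j$, one gets $\sum_{j=1}^m(wt(\sigma_j)-1)=deg(M)+N_c-m$, where $N_c:=\sum_{j=1}^m l(\sigma_j)$. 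Thus the asserted bound $cap(M)\le\sum_j(wt(\sigma_j)-1)-cov(M)$ is equivalent to
\[
|V(G)|\le |E_x(G)|+(N_c-m).
\]

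Next I would decompose each factor $C(j)$ into cycles. The $x$-edges $(\alpha^j_t,i^j_t)$ and $\partial$-edges $(\alpha^j_t,i^j_{\sigma_j(t)})$ of $C(j)$ organize, for every cycle $(t_1\,t_2\,\cdots\,t_p)$ of $\sigma_j$, into a single closed alternating chain, that is, a cycle-subgraph of degree $p$ in the sense of Lemma~\ref{lemma52}. This exhibits the edge multiset of $G$ as a gluing of exactly $N_c$ cycle-subgraphs. Call such a subgraph \emph{genuine} if its coverage is $\ge 2$, and \emph{diagonal} if it reduces to a single vertex carrying only loops $x_{ii}^p\partial_{ii}^p$; let $N_o$ and $N_d=N_c-N_o$ denote their respective numbers. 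The hypothesis that each $C(j)$ contains at least one letter $x_{ab}$ or $\partial_{ab}$ with $a\ne b$ forces at least one cycle of $\sigma_j$ to be genuine, so $N_o\ge m$, i.e.\ $N_d\le N_c-m$. This inequality is exactly the source of the extra slack $N_c-m$ that is absent from Lemma~\ref{lemma52}.

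Finally, let $G_o$ be the gluing of the genuine cycle-subgraphs alone, taken in the order induced from $M$. Deleting the diagonal cycle-subgraphs removes only letters of the form $x_{ii},\partial_{ii}$, which do not affect the off-diagonal counts that govern $\partial$-regularity; hence $G_o$ is $\partial$-regular, and Lemma~\ref{lemma53} applies to give $|V(G_o)|\le|E_x(G_o)|$. The diagonal cycle-subgraphs contribute no non-degenerate $x$-edges, so $|E_x(G)|=|E_x(G_o)|$, and each of the $N_d$ of them adjoins at most one new vertex, so $|V(G)|\le |V(G_o)|+N_d$. Combining these with $N_d\le N_c-m$ gives $|V(G)|\le|E_x(G)|+(N_c-m)$, which is the required inequality.

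The step I expect to be most delicate is the bookkeeping in this last paragraph: one must check carefully that passing to the genuine-cycle subgraph $G_o$ preserves $\partial$-regularity (so that Lemma~\ref{lemma53} is legitimately invoked), and that adjoining the diagonal pieces raises $|V|$ by at most $N_d$ while leaving $|E_x|$ unchanged. Once the factorization of each $C(j)$ into $l(\sigma_j)$ cycle-subgraphs and the bound $N_o\ge m$ are in place, the inequality follows directly from Lemma~\ref{lemma53}, with no equality case to analyze since none is claimed.
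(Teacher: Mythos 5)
Your argument is correct, but it takes a genuinely different route from the paper's. The paper does not reduce Lemma \ref{lemma55} to Lemma \ref{lemma53}: it keeps each factor $C(j)$ as a single unit (an ``$l_j$-fold cycle'', whose complete model has $l_j$ connected components), states the direct generalization Lemma \ref{lemma56} --- $|V(G)|\le |E_x(G)|+\sum_i(l_i-1)$ \emph{together with} a characterization of the equality case --- and re-runs the entire induction on the number of factors with a three-case analysis of how $C(1)$ attaches to the rest. Your decomposition of each $C(j)$ into its $l(\sigma_j)$ constituent cycles, separating the one-vertex purely diagonal ones from the genuine ones and invoking Lemma \ref{lemma53} as a black box, is more economical and even yields the slightly sharper bound $|V(G)|\le |E_x(G)|+N_d$ with $N_d\le N_c-m$. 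What the paper's heavier route buys is the equality case of Lemma \ref{lemma56}, which is used later (in Section \ref{proof44} and in the proof of Proposition \ref{53}) but is not part of the statement you were asked to prove, so you are right that no equality analysis is needed here. The one step you must make explicit is the formation of $G_o$: inside $C(j)$ the letters belonging to different cycles of $\sigma_j$ are interleaved (all $x$'s precede all $\partial$'s), so to present the genuine part of $M$ as a concatenation $C'(1)\cdots C'(N_o)$ of cycle monomials --- the form actually required by Lemma \ref{lemma53} --- you must permute letters within each $C(j)$, not merely delete the diagonal ones; ``the order induced from $M$'' does not by itself produce contiguous cycle blocks. This permutation is harmless but requires a check: for each ordered pair $(a,b)$ with $a\ne b$, applying the $\partial$-regularity of $M$ to the leftmost $\partial_{ab}$ of $C(j)$ shows that the letters to the right of $C(j)$ carry an excess of $x_{ab}$'s over $\partial_{ab}$'s at least equal to the total number of $\partial_{ab}$'s inside $C(j)$, and a short partial-sum computation then shows that any block rearrangement $[x^{(1)}][\partial^{(1)}]\cdots[x^{(l_j)}][\partial^{(l_j)}]$ of $C(j)$ stays $\partial$-regular; deleting the diagonal letters afterwards is innocuous exactly as you say. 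With that verified, the chain $|V(G)|\le |V(G_o)|+N_d\le |E_x(G_o)|+N_d=|E_x(G)|+N_d\le |E_x(G)|+(N_c-m)$ completes the proof.
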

Let us prove Proposition \ref{53} with the use of Lemmas \ref{lemma54} and \ref{lemma55}. Proofs of the lemmas are given below.

Let us write $\mu_j$ in the form
\begin{equation*}
\mu_j = \mu_j^{diag} + \mu_j^{off-diag}, \qquad 1 \le j \le r,
\end{equation*}
where
\begin{equation*}
\mu_j^{diag} = \sum_{i_1, \dots, i_{l_j} } \left( x_{i_1 i_1}^{\rho_1^{(j)}} \cdots x_{i_{l_j} i_{l_j}}^{\rho_{l_j}^{(j)}}
\cdots \partial_{i_1 i_1}^{\rho_1^{(j)}} \cdots \partial_{i_{l_j} i_{l_j}}^{\rho_{l_j}^{(j)}} -
(\gamma L)^{\rho_1^{(j)} + \dots + \rho_{l_j}^{(j)}} \right), \ \ \ l_j:=l(\rho^{(j)}),
\end{equation*}
and $\mu_j^{off-diag}$ consists of all other terms of the expression
\begin{equation*}
\mu_j = D_{\mathcal I} (p_{\rho^{(j)}, I_j}^{\#}) - |I_j|^{l_j} (\gamma L)^{|\rho^{(j)}|}.
\end{equation*}
Let us open the parentheses in the product
\begin{equation*}
\mu_1 \cdots \mu_r = (\mu_1^{diag} + \mu_1^{off-diag}) \cdots (\mu_r^{diag} + \mu_r^{off-diag}),
\end{equation*}
then open the parentheses in each term. We obtain a sum over the united set of indices; terms of this sum are products of factors of two types described in Lemmas \ref{lemma54} and \ref{lemma55}. We call these factors \emph{diagonal} and \emph{off-diagonal}, respectively.

Consider a term such that the supports of its diagonal and off-diagonal factors do not intersect. The estimates of Lemmas \ref{lemma54} and \ref{lemma55} show that the contribution of such a term is $O(L^{ \sum (wt( \rho^{(j)}) - 1) - cov })$ if the supports of factors can be divided into pairs with disjoint supports and is equal to $O(L^{ \sum (wt( \rho^{(j)}) - 1) - cov -1})$ otherwise.
The summation over indices contributes $O(L^{cov})$; therefore, the overall contribution of such terms can be written as
\begin{equation*}
\sum_{\sigma \in \mathcal{PM} (r) } \langle \mu_{i_1} \mu_{i_2} \rangle \cdots \langle \mu_{i_{r-1}} \mu_{i_r} \rangle +
O(L^{\sum(wt( \rho^{(j)}) - 1) -1} ),
\end{equation*}
since it is easy to see that
\begin{equation*}
\langle \mu_1 \mu_2 \rangle = \langle \mu_1^{diag} \mu_2^{diag} \rangle + \langle \mu_1^{off-diag} \mu_2^{off-diag} \rangle.
\end{equation*}

Consider now a term with intersecting supports of its diagonal and off-diagonal factors.
Let us prove that the overall contribution of such a term is $O( L^{\sum (wt( \rho^{(j)}) - 1) - cov -1 })$.

We call two factors of this term \emph{connected} if their supports have non-empty intersection.
Consider a connected component with both diagonal and off-diagonal factors. In this component there is a diagonal factor such that after its removal no new connected components consisting of a single diagonal factor appear (it is easy to see that such a diagonal factor exists). Let us remove this diagonal factor.

In the product of remaining factors we estimate the capacity of the off-diagonal factors with the use of Lemma \ref{lemma55}, we estimate the contribution of the diagonal factors which are not connected with off-diagonal factors with the use of Lemma \ref{lemma54}, and we estimate the contribution of the diagonal factors which are connected with off-diagonal ones as $O(L^{deg})$. Consider the latter factors; note that the number of vertices from supports of such diagonal factors that do not lie in the supports of off-diagonal factors is less than $\sum_{j} (l(\rho^{(j)}) -1)$, where sum is taken over indices corresponding to these diagonal factors.
The contribution of the remaining factors is equal to $O( L^{\sum (wt(\rho^{(j)}) -1) -cov })$, where the sum is taken over the corresponding partitions, and $cov$ is the number of elements in the union of supports of these factors.

Let us return the removed diagonal factor back. Suppose that it corresponds to a partition $\rho$.
Then no more than $l(\rho)-1$ vertices are added to the support. Open the parentheses in this factor; using \eqref{commut}, let us ``move'' all $x_{jj}$'s to the left and ``move'' all $\partial_{jj}$'s to the right. After this operation a sum of several terms appears. Note that the contribution of the term in which all $x_{jj}$'s are on the left and all $\partial_{jj}$'s are on the right and the contribution of $-(\gamma L)^{|\rho|}$ from this factor cancel out. On the other hand, in all other terms the number of diagonal operators $\partial_{jj}$ in off-diagonal factors or connected with them diagonal factors decreases. Repeating the arguments of the previous paragraph we obtain that the state of the whole term can be estimated as
\begin{equation*}
O( L^{\sum_{j=1}^r(wt(\rho^j) -1) -cov -1}),
\end{equation*}
where $cov$ is the coverage of the whole term. This completes the proof of Proposition \ref{53}.

\emph{Proof of Lemma \ref{lemma54}}.
It suffices to prove the lemma under the assumption that the factors of $D$ cannot be divided into two groups with disjoint supports. Therefore,

\begin{equation*}
cov(D) \le \sum_{j=1}^s (l_j-1)+1,
\end{equation*}
and
\begin{equation*}
\sum_{j=1}^s (wt(k^j) -1 ) -cov(D) \ge \sum_{j=1}^s |k^j| -1.
\end{equation*}
Hence, it is enough to prove that
\begin{equation*}
\langle D \rangle = O \left( L^{\sum (|k^j| -1)} \right)
\end{equation*}
for $s=2$, and
\begin{equation*}
\langle D \rangle = O \left( L^{\sum (|k^j| -2)} \right)
\end{equation*}
for $s \ge 3$.

The rest of the argument is completely analogous to the proof of Lemma \ref{lemma51}.

\emph{Proof of Lemma \ref{lemma55}.}
The proof follows similar lines as that of Lemma \ref{lemma52}. Let us introduce some notations generalizing the notations from the proof of Lemma \ref{lemma52}.

A \emph{complete $l$-fold cycle} of degree $k$ is the graph corresponding to a monomial
\begin{equation}
\label{monom}
x_{\alpha_1 i_1} \dots x_{\alpha_k i_k} \partial_{\alpha_1 i_{\sigma(1)} } \dots \partial_{\alpha_k i_{\sigma(k)} },
\end{equation}
whose support consists of $2 k$ distinct vertices, and where $\sigma \in S(k)$ is an arbitrary permutation with $l$ cycles.

An \emph{$l$-fold cycle} of degree $k$ is the graph corresponding to a monomial \eqref{monom} but with the weaker condition $E_x \cup E_{\partial} \ne \varnothing$. Any $l$-fold cycle can be obtained from a complete $l$-fold cycle by an identification of some vertices.

An \emph{$l$-fold $x$-cycle} is an $l$-fold cycle that can be obtained by a gluing of vertices from a complete $l$-fold cycle as follows. In each of $l-1$ connected components we identify all vertices inside a component, and the remaining connected component should be an $x$-cycle after an identification.
Similarly, an \emph{$l$-fold $\partial$-cycle} is an $l$-fold cycle whose $l-1$ connected components each have a single vertex, and the $l$th component is a $\partial$-cycle.

When we say ``multi-fold'' below we mean ``$l$-fold'' with some $l \ge 1$.

Examples of these graphs are shown in Figure \ref{LfoldCycle}.

\begin{figure}
\center{\includegraphics[height=4cm]{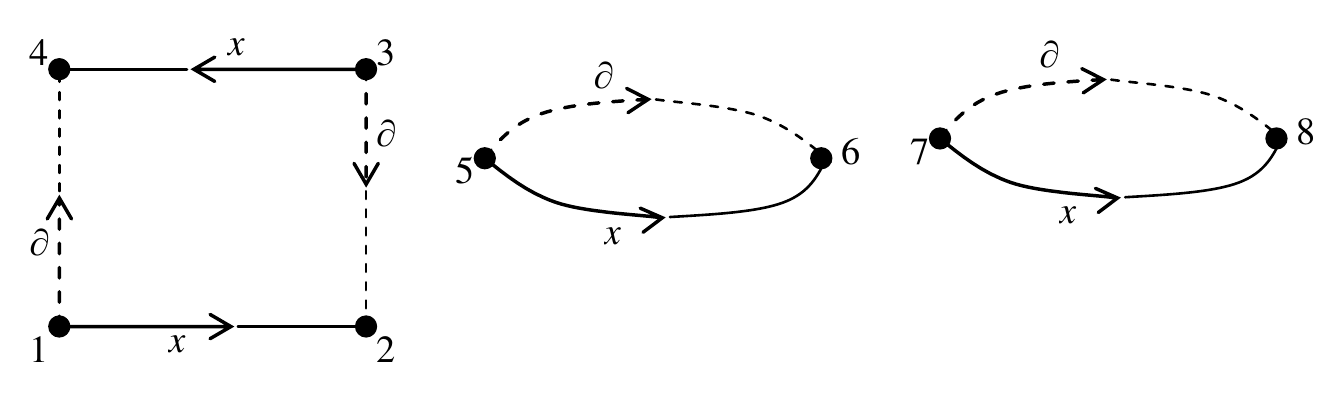}}

\center{\includegraphics[height=4cm]{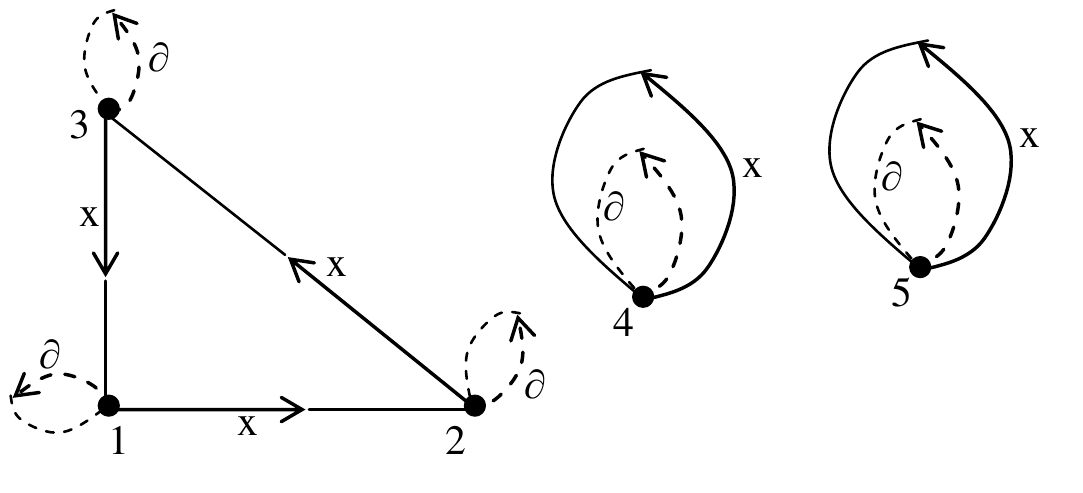}}
\caption{A complete 3-fold cycle of degree 4 (above) and 3-fold $x$-cycle of degree 5 (below).}
\label{LfoldCycle}
\end{figure}

A \emph{regular gluing} of an $l$-fold $\partial$-cycle and an $l$-fold $x$-cycle is an identification of some of their vertices such that the closed chains of non-degenerate $\partial$-edges and $x$-edges coincide with each other and there are no other identifications involved. We denote the regular gluing of graphs by $\#_R$.

Let us formulate a lemma that generalizes Lemma \ref{lemma53}.

\begin{lemma}
\label{lemma56}
Given an $l_1$-fold cycle $C(1)$, an $l_2$-fold cycle $C(2)$, ..., and an $l_n$-fold cycle $C(n)$, suppose that the graph
\begin{equation*}
G = C(1) \# C(2) \dots \# C(n)
\end{equation*}
is $\partial$-regular. Then
\begin{equation*}
|V(G)| \le |E_x(G)| + \sum_{i=1}^n (l_i -1).
\end{equation*}
Moreover, the equality holds iff there is a disjoint partition
\begin{equation*}
\{ 1,2, \dots, n \} = \{ i_1 < i_2 \} \sqcup \{i_3 < i_4 \} \dots \sqcup \{i_{2k-1} < i_{2k} \}
\sqcup \{ j_1, \dots, j_{n-2k} \},
\end{equation*}
such that $C(i_1)$, $\dots$, $C(i_{2k-1})$ are multi-fold $\partial$-cycles, $C(i_2)$, $\dots$, $C(i_{2k})$, $C(j_1)$, $\dots$, $C(j_{n-2k})$ are multi-fold $x$-cycles, and
\begin{equation*}
G = ( C(i_1) \#_R C(i_2) ) \sqcup \dots \sqcup ( C(i_{2k-1}) \#_R C(i_{2k}) ) \sqcup C(j_1) \sqcup \dots \sqcup C(j_{n-2k}).
\end{equation*}
\end{lemma}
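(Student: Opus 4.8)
The plan is to prove Lemma \ref{lemma56} by induction on $n$, mirroring the proof of its single‑fold precursor Lemma \ref{lemma53} and reducing the multi‑fold bookkeeping to the single‑cycle estimates already obtained there. The one observation I would use repeatedly is that within the monomial of any single multi‑fold cycle $C(i)$ all $x$‑letters stand to the left of all $\partial$‑letters; hence, in the application order, every non‑degenerate $\partial$‑edge of $C(i)$ has \emph{no} $x$‑letter with the same index pair to its right inside $C(i)$, so its matching $x$‑edge must come from a cycle glued further to the right.

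For the base case $n=1$, let $C=C(1)$ be a $\partial$‑regular $l_1$‑fold cycle. By the observation above, $\partial$‑regularity forces $E_\partial(C)=\varnothing$: every $\partial$‑edge is a loop. Thus $C$ is an identification of the complete $l_1$‑fold cycle, i.e. of $l_1$ disjoint complete single cycles, and after turning the $\partial$‑edges into loops each of these becomes a directed $x$‑cycle. I would then argue combinatorially that each original cycle lies entirely inside one connected component of $C$ (an edge joining two components would have to be a loop, i.e. its endpoints are identified), so the $l_1$ cycles are partitioned among the components. A component carrying a non‑degenerate $x$‑edge contains a cycle and therefore satisfies $|V|-|E_x|\le 0$, while a component with no non‑degenerate edge is a single vertex contributing $+1$ and absorbs at least one entire original cycle. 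Since $E_x(C)\cup E_\partial(C)\ne\varnothing$ forces at least one surviving non‑degenerate $x$‑edge, at most $l_1-1$ single‑vertex components occur, giving $|V(C)|-|E_x(C)|\le l_1-1$, with equality exactly when $l_1-1$ cycles collapse to points and the last is a genuine $x$‑cycle — that is, when $C$ is an $l_1$‑fold $x$‑cycle.

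For the inductive step I would write $G_2=C(2)\#\cdots\#C(n)$ and $G_1=C(1)\#G_2$, assume the lemma for $G_2$, and reduce everything to the increment inequality
\[
|V(G_1)|-|E_x(G_1)|\le |V(G_2)|-|E_x(G_2)|+(l_1-1),
\]
splitting into the same three cases as in Lemma \ref{lemma53}: $C(1)$ disjoint from $G_2$ (where $\partial$‑regularity makes $C(1)$ itself $\partial$‑regular and the base case applies), $V(G_1)=V(G_2)$ (where no vertices are added and $|E_x|$ cannot decrease, so the increment is $\le 0\le l_1-1$), and the mixed case, which is the crux. In the mixed case the observation above shows that every non‑degenerate $\partial$‑edge of $C(1)$ is covered by an $x$‑edge of $G_2$, so both its endpoints already lie in $G_2$; the new vertices therefore attach only through $x$‑edges and loops. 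Decomposing $C(1)$ into its $l_1$ single‑cycle pieces and applying the branch argument of Lemma \ref{lemma53} to each, a piece meeting $G_2$ contributes $\le 0$, a piece disjoint from $G_2$ is forced to be an $x$‑cycle ($0$) or a single new vertex ($+1$), and, since $E_x(C(1))\cup E_\partial(C(1))\ne\varnothing$ forbids all $l_1$ pieces from collapsing to isolated points, at most $l_1-1$ of them contribute $+1$ while at least one contributes $\le 0$. This yields the increment bound, and the induction gives $|V(G)|\le|E_x(G)|+\sum_{i=1}^n(l_i-1)$.

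Finally I would propagate the equality conditions through the induction: equality forces $G_2$ to be in the asserted form and $C(1)$ to contribute exactly $l_1-1$, which happens precisely when $l_1-1$ of its pieces are single new vertices and the remaining piece is either a genuine $x$‑cycle forming a new disjoint summand (an unpaired multi‑fold $x$‑cycle) or a $\partial$‑cycle whose non‑degenerate $\partial$‑edges exactly cover the non‑degenerate $x$‑edges of some multi‑fold $x$‑cycle of $G_2$, i.e. $C(1)$ is a multi‑fold $\partial$‑cycle regularly glued to it, producing a new $\#_R$ pair. Reassembling shows $G$ is a disjoint union of regular gluings $C(i_{2k-1})\#_R C(i_{2k})$ and single multi‑fold $x$‑cycles, as claimed. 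I expect the main obstacle to be precisely case $(3)$ of the inductive step: correctly distributing the new vertices and $x$‑edges among the $l_1$ pieces, verifying that the $+1$ contributions are capped at $l_1-1$, and checking that the equality conditions assemble into the stated regular gluings of multi‑fold $\partial$‑ and $x$‑cycles all require careful combinatorial bookkeeping.
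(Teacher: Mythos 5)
Your proposal is correct and follows essentially the same route as the paper's proof: induction on $n$, with the base case reduced to identifications of the $l_1$ disjoint complete cycles (at most $l_1-1$ of which may collapse to isolated vertices), and an inductive step built on the same three-way case analysis and the branch argument of Lemma \ref{lemma53} --- you merely reshuffle which scenarios land in which case by using $V(G_1)=V(G_2)$ in place of the paper's all-or-nothing-per-component split, so that the possible equality configurations migrate into your mixed case. The equality analysis you sketch (exactly $l_1-1$ one-vertex pieces outside $G_2$, the remaining piece either an $x$-cycle disjoint summand or a $\partial$-cycle whose non-degenerate $\partial$-edges cover the $x$-edges of an unpaired multi-fold $x$-cycle of $G_2$) matches the paper's conditions in its case (2).
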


\emph{Proof}. Let us prove the lemma by induction on $n$.

\emph{Base of induction.} For $n=1$ the graph $G$ can be obtained from a complete $l$-fold cycle by a gluing of vertices. Let us do this identification in two steps. First, we identify vertices inside connected components, and then we identify vertices of different connected components. On the first step, similarly to the case $n=1$ in Lemma \ref{lemma53}, we obtain that for each component with at least two distinct vertices we have $|V| \le |E_x|$, and for components with only one vertex we have $|V| = |E_x|+1$ ($1=0+1$). After an identification of vertices from different connected components $|V|$ decreases and $|E_x|$ does not increase. Recall that not all components of multi-fold cycles consist of one vertex (by definition); therefore, the lemma holds for $n=1$.

\emph{Induction step.}
Assume that the lemma holds for the graph
\begin{equation*}
G_2 := C(2) \# \dots \# C(n).
\end{equation*}

Consider three cases which are similar to the three cases of Lemma \ref{lemma53}:

1) The supports of $C(1)$ and $G_2$ do not intersect.

2) The supports of $C(1)$ and $G_2$ intersect. For each connected component of $C(1)$ the following is true: All its vertices are glued to some vertices of $G_2$ or all its vertices are not glued to vertices of $G_2$.

3) There is at least one connected component of $C(1)$ such that some vertices of this component are glued to vertices of $G_2$, and some vertices of this component are not glued to vertices of $G_2$.

The case (1) can be treated similarly to the case (1) in the proof of Lemma \ref{lemma53}.
In the case (2) consider the connected component of $C(1)$ with more than one vertex.
If this component lies outside $G_2$ then the $\partial$-regularity of $G$ implies that it is a $\partial$-regular cycle; Lemma \ref{lemma53} gives an estimate $|V| \le |E_x|$ for this component.
If this component lies inside $G_2$ then the number of vertices of $G$ is not greater than the number of vertices of $G_2$, and the number of non-degenerate $x$-edges can increase. On the other hand, one-vertex components increase $|V|$ by at most 1 each (and do not change $|E_x|$).
Summing up over all connected components of $C(1)$, we get
\begin{equation*}
|V(G_1)| - |E_x(G_1)| \le |V(G_2)| - |E_x(G_2)| + (l_1 -1),
\end{equation*}
and the equality is possible only under the conditions:

a) $C(1)$ has $(l_1-1)$ one-vertex components which do not intersect $G_2$.

b) The remaining component of $C(1)$ does not have non-degenerate $x$-edges; its non-degenerate $\partial$-edges must cover $x$-edges from $G_2$.

c) For any $a \ne b \in V(G_2)$ we have
\begin{equation*}
|E_{\partial}^{ab} (C(1))| \le |E_x^{ab} (G_2)| - |E_{\partial}^{ab} (G_2)|.
\end{equation*}
The induction hypothesis and these conditions imply the desired form of $G_2$.

Consider the case (3). The treatment of the case (3) in Lemma \ref{lemma53} shows that for any connected component of $C(1)$ which is partially intersected with $G_2$, we have the strict inequality $|V| < |E_x|$. For all other components the arguments of the case (2) show that the number of new vertices (which they add) is not greater than the number of new non-degenerate $x$-edges plus one. Therefore,
\begin{equation*}
|V(G_1)| - |E_x(G_1)| < |V(G_2)| - |E_x(G_2)| +l_1 -1 \le \sum_{i=1}^n (l_i-1).
\end{equation*}
This completes the proof of Lemma \ref{lemma56}.

Lemma \ref{lemma55} is derived from Lemma \ref{lemma56} in exactly the same way as Lemma \ref{lemma52} is derived from Lemma \ref{lemma53}.

\subsection{Proof of Lemma \ref{44} }
\label{proof44}
 For a set of integers $\vec{i}:=(i_1, \dots, i_{k+1}) \in \mathbb Z^{k+1}_{\ge 0}$, denote by $s(\vec{i})$ the sum of these integers
\begin{equation*}
s(\vec{i}) := \sum_{j=1}^{k+1} i_j,
\end{equation*}
and denote by $n(\vec{i})$ the number of components of $\vec{i}$ that are strictly positive. Any $\vec{i}$ corresponds to a unique partition $\rho(\vec{i})$ which is obtained by removing all zero components and by ordering the remaining ones. Note that
\begin{equation*}
wt(\rho(\vec{i})) = s(\vec{i}) + n(\vec{i}).
\end{equation*}
We agree that $p_0^{\#}=1$.
By \eqref{change} we have
\begin{equation}
\label{change5}
p_{k,I} = \frac{1}{k+1} \sum_{ \vec{i} : s(\vec{i}) +n(\vec{i}) = k+1}
\prod_{j=1}^{k+1} p_{i_j, I}^{\#} + \dots,
\end{equation}
where the dots denote terms with weight $\le k$.

It is known (see \cite[Prop. 4.9]{IvaOls}) that for any partition $\rho = (k_1, k_2, \dots, k_{l(\rho)})$ we have
\begin{equation}
\label{decomp}
\mathbf p_{\rho}^{\#} = \prod_{j=1}^{l(\rho)} \mathbf p_{k_j}^{\#} + \dots,
\end{equation}
where the dots denote terms with weight $\le wt(\rho)-1$.

Recall that the set $\{ \mathbf p_{\rho}^{\#} \}$ is a linear basis in the algebra of shifted symmetric functions. Using \eqref{change5} and \eqref{decomp} we obtain that $p_{k,I}$ decomposes into a linear combination of $p_{\rho,I}^{\#}$'s as follows
\begin{equation}
\label{lineaComb}
p_{k,I} - \langle p_{k,I} \rangle = \sum_{\vec{i}: s(\vec{i}) +n(\vec{i}) = k+1} \left( p_{\rho(\vec{i}), I}^{\#} - \left\langle p_{\rho(\vec{i}), I}^{\#} \right\rangle \right) + \sum_{\rho : wt(\rho) \le k} \left( p_{\rho,I}^{\#} - \left\langle p_{\rho,I}^{\#} \right\rangle \right).
\end{equation}
Consider the product
\begin{equation*}
\left\langle \left( p_{k,I_1} - \langle p_{k,I_1} \rangle \right) \left( p_{l,I_2} - \langle p_{l,I_2} \rangle \right) \right\rangle.
\end{equation*}
Let us substitute \eqref{lineaComb} into this expression and open the parentheses. From Proposition \ref{51main} it follows that
\begin{equation*}
\left\langle \left( p_{\rho_1,I_1}^{\#} - \langle p_{\rho_1,I_1}^{\#} \rangle \right) \left( p_{\rho_2,I_2}^{\#} - \langle p_{\rho_2,I_2}^{\#} \rangle \right) \right\rangle = O \left(L^{wt(\rho_1)+wt(\rho_2)} \right).
\end{equation*}
Hence, a nonzero contribution to the top degree of covariance of $p_{k,I_1}$ and $p_{l,I_2}$ can only be given by terms of the form
\begin{equation}
\label{p-cov2}
\left\langle \left( p_{\rho_1,I_1}^{\#} - \langle p_{\rho_1,I_1}^{\#} \rangle \right) \left( p_{\rho_2,I_2}^{\#} - \langle p_{\rho_2,I_2}^{\#} \rangle \right) \right\rangle,
\end{equation}
where $wt(\rho_1) = k+1$ and $wt(\rho_2) = l+1$.

Let write the elements $p_{\rho,I}^{\#}$ as differential operators. Recall that the graph corresponding to such an operator consists of $l(\rho)$ connected components. From the proof of Lemma \ref{lemma56} it follows that two graphs contribute to the top order of \eqref{p-cov2} only if the following condition holds: They have one common connected component, and all their other components are disjoint and have only one vertex each. Let $\rho_1 = (k_1, k_2, \dots, k_{l(\rho_1)})$ and let $\rho_2=(m_1, m_2, \dots, m_{l(\rho_2)})$. Suppose that the components with common support correspond to $k_{a}$ and $m_{b}$. Then the contribution of these two components equals the covariance of $p_{k_{a}, I_1}^{\#}$ and $p_{m_b, I_2}^{\#}$, and the contributions of all other components are equal to their states.
This completes the proof of Lemma \ref{44}.

\end{document}